\newcommand{\vanish}[1]{\relax}
\newcommand{\N}{\mathbb{N}}
\newcommand{\R}{\mathbb{R}}
\newcommand{\C}{\mathbb{C}}
\newcommand{\E}{\mathbb{E}}
\newcommand{\Sum}[2][\relax]{%
 \ifx#1\relax \sideset{}{_{#2}}\sum
 \else \sideset{}{^{#1}_{#2}}\sum
 \fi}
\DeclareMathOperator{\Sect}{Sect}
\DeclareMathOperator{\dom}{dom}
\DeclareMathOperator{\ran}{ran}
\newcommand{\norm}[2][\relax]{%
   \ifx#1\relax \ensuremath{\left\Vert#2\right\Vert}
   \else \ensuremath{\left\Vert#2\right\Vert_{#1}}
   \fi}
\newcommand{\sprod}[2]{\ensuremath{%
  \setbox0=\hbox{\ensuremath{#2}}
  \dimen@\ht0
  \advance\dimen@ by \dp0
  \left(\left.#1\rule[-\dp0]{0pt}{\dimen@}\,\right|#2\hspace{1pt}\right)}}
\newcounter{aufzi}
\newcounter{aufzii}
\newcounter{aufziii}
 \newtheorem{thm}{Theorem}[section]
 \newtheorem{cor}[thm]{Corollary}
 \newtheorem{lemma}[thm]{Lemma}
 \newtheorem{prop}[thm]{Proposition}
 \theoremstyle{definition}
 \newtheorem{defn}[thm]{Definition}
 \theoremstyle{remark}
\newtheorem{example}[thm]{Example}
\newtheorem{remark}[thm]{Remark}
\numberwithin{equation}{section}
\numberwithin{equation}{section} \numberwithin{theorem}{section}
\newcommand\NPp{\mathcal{NP}_+}
\begin{document}

\title[Resolvents of functions of sectorial operators
]
{
Resolvent representations for functions of sectorial operators}

\author{Charles Batty}
\address{St. John's College\\
University of Oxford\\
Oxford OX1 3JP, UK
}

\email{charles.batty@sjc.ox.ac.uk}

\author{Alexander Gomilko}
\address{Faculty of Mathematics and Computer Science\\
Nicolas Copernicus University\\
ul. Chopina 12/18\\
87-100 Toru\'n, Poland \\
and Institute of Telecommunications and Global \\
Information Space, National Academy of Sciences of Ukraine\\
Kiev, Ukraine
}
\email{gomilko@mat.umk.pl}



\author{Yuri Tomilov}
\address{\textbf{}
Institute of Mathematics\\
Polish Academy of Sciences\\
\'Sniadeckich 8\\
00-956 Warszawa, Poland
}

\email{ytomilov@impan.pl}

\thanks{This work was completed with the support of the  EPSRC grant EP/J010723/1,  the NCN grant
DEC-2014/13/B/ST1/03153 and the EU grant ``AOS'', FP7-PEOPLE-2012-IRSES, No 318910.  The authors are grateful to two anonymous referees for helpful remarks and suggestions.}

\subjclass[2010]{Primary 47A10, 47D03; Secondary 30C45, 30E20, 47A60.}

\keywords{Sectorial operator, holomorphic $C_0$-semigroup, Bernstein function, positive real function, functional calculus, Ritt operator, subordination}

\date{\today}

\begin{abstract}

We obtain integral representations for the resolvent of $\psi(A)$,
where $\psi$ is a holomorphic function mapping the right half-plane and the right half-axis into themselves,
and $A$ is a sectorial operator on a Banach space.
As a corollary, for a wide class of functions $\psi$,
we show that the operator $-\psi(A)$ generates a sectorially bounded holomorphic $C_0$-semigroup
on a Banach space whenever $-A$ does, and the sectorial angle of $A$ is preserved.  When $\psi$ is a Bernstein function, this was recently proved by Gomilko and Tomilov, but the proof here is more direct.
Moreover, we prove that such a permanence property for $A$ can be described, at least on Hilbert spaces,
in terms of the existence of a bounded $H^{\infty}$-calculus for $A$. As byproducts of our approach, we also
obtain new results on functions mapping generators of bounded semigroups into generators of holomorphic semigroups and on subordination for Ritt operators.
\end{abstract}

\maketitle

\section{Introduction}
Given a sectorial operator $A$ of angle $\theta \in [0,\pi)$ on a Banach space $X$, one usually defines a holomorphic functional calculus for $A$  as a mapping
\[
\mathcal A \ni f \mapsto f(A),
\]
where $\mathcal A$ is an appropriate algebra  of holomorphic functions on a sector $\Sigma_\psi:=\{z \in \mathbb C: |\arg (z)| < \psi\}$, $\psi \in (\theta,\pi]$, and $f(A)$ is, in general, a closed operator on $X$. Usually certain regularisations on $f$ are necessary and the definition of $f(A)$ may be quite implicit.

For a formal procedure as above to be useful one often needs further conditions on $f$ ensuring that
 $f(A)$ is bounded and then leading to sharp enough estimates for $f(A)$.  In this way, we come naturally to the established notions of holomorphic functional calculus of sectorial operators, $H^\infty$-calculus and its boundedness, Fourier multipliers, and other aspects of the theory of functional calculi during the last twenty years.

It is natural to wonder what other properties of $f(A)$ one might expect if $A$ is fixed.
In particular, to deal with $f(A)$ within the established framework of functional calculus, one needs to know that $f(A)$ is at least sectorial, and desirably  of the same angle as $A$.   This direction of research has  received little attention.

The aim of the present paper is to describe classes $\mathcal F$ of holomorphic functions preserving the set $\operatorname{Sect}(\theta)$ of sectorial operators of angle $\theta$  within the framework of the holomorphic functional calculus. Motivated by applications to semigroup theory, the following four natural questions  arise in such a study.
\begin{itemize}
\item [(Q1)] Which functions preserve $\operatorname{Sect}(\theta)$ for $\theta \in [0,\pi/2)$? In other words, when do  they preserve the class of (negative) generators of sectorially bounded holomorphic $C_0$-semigroups of angle $\pi/2-\theta$?

\item [(Q2)] Which functions preserve  $\operatorname{Sect}(\theta)$ for  $\theta \in [0,\pi)$?

\item [(Q3)] Which functions preserve the class of (negative) generators of bounded semigroups?

\item [(Q4)] Which functions have a so-called improving property, meaning that they map $\operatorname{Sect}(\theta)$ for some $\theta \in [\pi/2,\pi)$ into (negative) generators of sectorially bounded holomorphic $C_0$-semigroups?
    \end{itemize}

Simple considerations with  $X = \C$ show that a holomorphic function possessing any of the permanence properties in (Q1)-(Q4) has to map the right half-plane $\C_+ := \{\lambda\in\C : \operatorname{Re}\lambda > 0\}$ into itself.  Moreover to preserve the angle of sectoriality it has to map the positive half-axis into itself.  The class of functions with these properties, called $\mathcal{ NP}_+$ in the paper, and its subclasses, such as the Bernstein functions and their relatives,  will be basic for our treatment.  The $\mathcal{NP}_+$-functions allow a more instructive description as the functions which are holomorphic in $\mathbb C_+$ and which preserve the sector $\Sigma_\theta$ for each $\theta \in [0,\pi/2)$.  Thus they are very natural candidates for answering any of the questions above.
The $\mathcal{NP}_+$-functions have been treated in the literature under several different names, see Section \ref{pnp} for a clarification and comments on that.  They were used recently in \cite{GT2} for the purposes of holomorphic functional calculus, and here we develop that theme.

Although the relevance of ${\mathcal NP}_+$-functions has only recently become apparent, some subclasses of ${\mathcal NP}_+$-functions have already played important roles in the study of permanence properties.  Bernstein functions can be defined to be those functions $f$ such that the functions $e^{-tf}$ for $t>0$ are the Laplace transforms of a convolution semigroup of sub-probability measures $\mu_t$ on $[0,\infty)$, and so they are clearly in ${\mathcal NP}_+$.  Bochner noted in \cite{B} that this definition can serve as a base for constructing operator semigroups generated by the negatives of operator Bernstein functions thus introducing the notion of subordination.   In \cite{Phil}, extending Bochner's ideas and dealing with (Q3), Phillips proved that Bernstein functions preserve the class of negative generators of bounded $C_0$-semigroups; see also \cite{Ro} and \cite{Ne} for related studies.  In \cite{GT} subordination was included in the framework of functional calculi in the sense of \cite{Ha06}  or \cite{KW04}.
 
Bernstein functions do not extend holomorphically outside $\mathbb C_+$ in general.  However, there is an important subclass of them, called complete Bernstein functions, which extend holomorphically to $\mathbb C \setminus (-\infty,0]$ and, moreover, map the upper half-plane into itself.   By restricting to the class of complete Bernstein functions in \cite{H}, Hirsch proved a permanence property of the type considered in (Q2), but without addressing the preservation of the corresponding angles.  In \cite{KR} the authors asked whether Bernstein functions preserve the class of negative generators of holomorphic semigroups without being specific about the interpretation of their question.  In \cite{BBL} Berg, Boyadzhiev, and de Laubenfels strengthened  part of Hirsch's result and showed that complete Bernstein functions preserve $\operatorname{Sect}(\theta)$ if $\theta < \pi/2$, and they obtained partial results for some other Bernstein functions.  This provided partial answers to (Q1) and to one interpretation of the question from \cite{KR}.

As far as Bernstein functions are concerned, conclusive answers to (Q1) and (Q2) were obtained very recently in \cite{GT}. It was proved there that complete Bernstein functions preserve $\operatorname{Sect}(\theta)$ for any fixed $\theta \in [0,\pi)$, and moreover Bernstein functions do the same if the angle  $\theta$ is less than $\pi/2$. 

Improving properties of $\mathcal{NP}_+$-functions as described in (Q4) were studied by a number of authors including Yosida \cite{Yo60}, Paquet \cite{Pa}, Carasso and Kato \cite{CK}, Fujita \cite{F} and Mirotin \cite{M1}, \cite{Mir2}.   A number of geometric and explicit conditions, sometimes characterizing the improving property, were given in \cite{GT}. See \cite{GT} for discussion of those papers and more references.

The study of permanence of functional calculi in \cite{GT} was based on the specific structure of Bernstein functions and their properties within several compatible functional calculi including the extended Hille-Phillips calculus as described in \cite[Section 3.3]{GT}.  The function $1/z$ is not a Bernstein function, but it is an $\NPp$-function.  If $A$ is a sectorial operator with dense range, then $A^{-1}$ is sectorial of the same angle.  Moreover there are very natural examples of ${\mathcal NP}_+$-functions that are neither Bernstein nor the reciprocal of a Bernstein function, and map sectorial operators of a fixed angle into themselves.  For example $f(z)=\sqrt {z (1-e^{-z})}$ has these properties and it is the geometric mean of two Bernstein functions (see Example 4.8 and Corollary \ref{TCbf13} below).  Thus the extensions of (Q1)-(Q4)  to the class  of $\NPp$-functions are very natural, and we address them in this paper.  Since there are examples of $\NPp$-functions which lie outside the scope of the extended Hille-Phillips calculus, we work in the setting of the holomorphic functional calculus for $\mathcal{NP}_+$-functions and we use different techniques from \cite{GT}.

Nevertheless one cannot expect positive answers to (Q1)-(Q4) for all functions in ${\mathcal NP}_+$. The underlying reason is that such properties are essentially equivalent to the boundedness of $H^\infty$-calculus as shown by the following result which is proved in Theorem \ref{ops}.

\begin{thm}
Let $A$ be a sectorial operator of angle $\omega \in [0, \pi/2)$ on a Banach space $X$ with dense range,
and let $\theta \in (\omega, \pi/2)$. Consider the following statements.
\begin{itemize}
 \item [(i)] $A$ admits a bounded $H^\infty$-functional calculus on $\Sigma_\theta$.
\item [(ii)] For every $f \in \mathcal{NP}_+$,  $f(A)$ is a sectorial operator of angle (at most) $\omega$.
\item [(iii)] For every $f \in  \mathcal{NP}_+$, $-f(A)$ is the generator of a bounded $C_0$-semigroup.
\item [(iv)] For every  $f\in \mathcal{NP}_+$ such that $\lim_{t \downarrow 0} f(t)$ and $\lim_{t \to \infty} f(t)$ both exist in $(0, \infty)$, $-f(A)$ is the generator of a $C_0$-semigroup.
\item [(v)] $A$ has  bounded $H^\infty$-calculus on $\mathbb C_+$.
\end{itemize}
Then
$$
{\rm(i)}  \implies {\rm(ii)} \implies {\rm(iii)} \iff {\rm(iv)} \iff {\rm(v)}.
$$
If $X$ is a Hilbert space, all four properties are equivalent.
\end{thm}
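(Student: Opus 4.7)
The plan is to establish the chain $(i) \Rightarrow (ii) \Rightarrow (iii) \Rightarrow (iv) \Rightarrow (v) \Rightarrow (iii)$, together with $(v) \Rightarrow (i)$ in the Hilbert case. Three of these implications are essentially routine. For $(ii) \Rightarrow (iii)$, sectoriality of $f(A)$ of angle $< \pi/2$ gives a bounded holomorphic (in particular bounded) $C_0$-semigroup generated by $-f(A)$. $(iii) \Rightarrow (iv)$ is a trivial weakening in both the admissible class of $f$ and in the conclusion. For $(v) \Rightarrow (iii)$, I will use that for any $f \in \mathcal{NP}_+$ and $\mu > 0$ one has $|\mu + f(z)| \geq \mu + \operatorname{Re} f(z) \geq \mu$ on $\mathbb{C}_+$, so $\|\mu^n/(\mu + f)^n\|_{H^\infty(\mathbb{C}_+)} \leq 1$ for every $n \geq 1$. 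The bounded $H^\infty(\mathbb{C}_+)$-calculus then delivers $\|\mu^n(\mu + f(A))^{-n}\| \leq C$, and Hille--Yosida forces $-f(A)$ to generate a bounded $C_0$-semigroup.

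For $(i) \Rightarrow (ii)$, I plan to use the resolvent representations for $f(A)$ developed earlier in this paper, which express $\lambda(\lambda - f(A))^{-1}$ through an integral involving resolvents of $A$ tested against bounded holomorphic functions on $\Sigma_\theta$. The bounded $H^\infty$-calculus hypothesis then converts these representations into uniform sectorial estimates for $f(A)$ that preserve the angle $\omega$. A more direct route, not relying on the representations, is to fix $\alpha > \omega$, take $\lambda \notin \overline{\Sigma_\alpha}$, and observe that sector preservation of $f$ makes $h_\lambda(z) := \lambda/(\lambda - f(z))$ a uniformly bounded element of $H^\infty(\Sigma_\theta)$; the bounded $H^\infty$-calculus on $\Sigma_\theta$ then bounds $h_\lambda(A) = \lambda(\lambda - f(A))^{-1}$.

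The main obstacle will be $(iv) \Rightarrow (v)$. The plan is to build a sufficiently rich parametric family of $\mathcal{NP}_+$-functions with finite positive limits at $0$ and $\infty$---a natural starting point is the M\"obius-type family $f_{a,b}(z) = (z+a)/(z+b)$ with $a, b > 0$ together with its $\mathcal{NP}_+$-closure---and to convert the Hille--Yosida bounds for the semigroups generated by $-f(A)$ into uniform resolvent estimates for $A$ on all of $\mathbb{C}_+$. Because the resolvents of such $f(A)$ can be written explicitly in terms of resolvents of $A$ at base points determined by $\lambda$ and the parameters of $f$, this will produce bounds $\|h(A)\| \leq C\|h\|_{\infty, \mathbb{C}_+}$ for $h$ in a dense subset of $H^\infty(\mathbb{C}_+)$; a closed graph / uniform boundedness argument will then promote this to the full bounded $H^\infty(\mathbb{C}_+)$-calculus. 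This is the technical heart of the proof and the step where the restrictions on $f$ in (iv) really pay off.

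To close the Hilbert-space equivalence, I will invoke McIntosh's theorem, which forces the $H^\infty$-angle of a sectorial operator on a Hilbert space to coincide with its sectoriality angle $\omega$. Hence bounded $H^\infty(\mathbb{C}_+)$-calculus automatically upgrades to bounded $H^\infty(\Sigma_\theta)$-calculus for every $\theta > \omega$, yielding $(v) \Rightarrow (i)$ and completing the equivalence.
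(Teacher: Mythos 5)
Your outer architecture matches the paper's (the chain (i)$\Rightarrow$(ii)$\Rightarrow$(iii)$\Rightarrow$(iv), the equivalence of (iii), (iv), (v), and McIntosh's theorem on Hilbert space), and your treatments of (ii)$\Rightarrow$(iii), (iii)$\Rightarrow$(iv), (v)$\Rightarrow$(iii) (via powers of $\mu/(\mu+f)$ and Hille--Yosida, instead of the paper's route through $e^{-tf}$ and the half-plane generation criterion --- both work) and of the Hilbert-space upgrade are sound. Two remarks on (i)$\Rightarrow$(ii): the resolvent representations developed in the paper require $f\in\mathcal{E}$ or $f\in\mathcal{BF}$ and are not available for arbitrary $f\in\NPp$, so your first route does not apply; and since a general $f\in\NPp$ only maps $\Sigma_\theta$ into $\Sigma_\theta$ (take $f(z)=z$), the function $\lambda/(\lambda-f(z))$ is uniformly bounded on $\Sigma_\theta$ only for $\lambda\notin\overline{\Sigma}_\alpha$ with $\alpha\ge\theta$, so your direct route yields sectoriality of angle $\theta$ rather than $\omega$. (That is also all the paper's own argument yields, via $f(A)=g(A^{1/\alpha})^{\alpha}$ with $g(z)=f(z^{\alpha})^{1/\alpha}$ and $\alpha=2\theta/\pi$; the body of the paper states (ii) with angle $\theta$.)

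The genuine gap is (iv)$\Rightarrow$(v), where your plan would fail for two reasons. First, (iv) only provides that $-f(A)$ generates a $C_0$-semigroup, with no growth bound, so the Hille--Yosida estimates you would extract have constants and exponential types depending on $f$; nothing forces them to be uniform over a parametric family, and without uniformity no $H^\infty$-bound can follow. Second, and more fundamentally, uniform bounds on rational $\NPp$-functions such as $f_{a,b}(z)=(z+a)/(z+b)$ carry no information: $f_{a,b}(A)=I+(a-b)(A+b)^{-1}$ is automatically uniformly bounded (on comparable parameter ranges) for \emph{every} sectorial operator of angle $<\pi/2$, and it is precisely the content of the known counterexamples (e.g.\ multiplication operators with respect to a conditional basis, as used later in this very section of the paper) that such rational bounds do not imply a bounded $H^\infty(\C_+)$-calculus. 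No ``dense subfamily with explicit resolvents'' can bridge this; one must instead convert an arbitrary witness of unboundedness of the calculus into a violation of (iv). The paper does this by contradiction: if (v) fails, the calculus is already unbounded on $H^\infty(\C_+)\cap C_0(\overline{\C}_+)$; writing $h=h_0+ih_1$ with $h_j(\overline z)=\overline{h_j(z)}$, some $h_j(A)$ is unbounded with $\|h_j\|_\infty\le1$; then $g=(h_j+2)/4\in\NPp$ satisfies $1/4\le|g|\le3/4$, so $f=-\log g\in\NPp$ has finite positive limits at $0$ and $\infty$, while $e^{-f(A)}=g(A)$ is unbounded, so $-f(A)$ cannot generate a $C_0$-semigroup. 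This logarithm construction is the missing idea in your proposal.
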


 Since there are well-known examples of sectorial operators $A_0$ with angle $0$ without a bounded $H^\infty$-calculus, the operator $f(A_0)$ is not sectorial for certain  $\mathcal{NP}_+$-functions $f$. See Theorem \ref{ops} and Remark \ref{opsrem} for further details. Thus, as far as the permanence properties are concerned, one has to look for admissible \emph{subclasses} of $\mathcal{ NP}_+$-functions,
and the theorem above may serve as an illustration of the difficulty of the problems that we deal with.

Given a $\mathcal{NP}_+$-function $f$, we start by obtaining an integral representation for the resolvent of $f(z)$ (Lemma \ref{In2})  and then we try to replace $z$ by $A$. The representation is of independent interest, and it may be useful in other contexts as well.  If the representation converges absolutely with suitable estimates, we are then able to extend it holomorphically to an appropriate sector, and to obtain the corresponding resolvent estimate for $f(A)$ rather directly. However, to ensure the absolute convergence and to transfer the sectoriality estimates from $A$ to $f(A)$ we work with a proper subclass $\mathcal D$ of  $\mathcal { NP}_+$ that still have some traces of the special behaviour of Bernstein functions. The class $\mathcal D$ is defined and studied  thoroughly in Section  \ref{pnp}.

As we show in Section \ref{BF}, the class $\mathcal D$ contains not only the Bernstein functions and their reciprocals, but also several function classes of interest beyond Bernstein functions. At the same time, the structure of $\mathcal D$ allows us   to get a very explicit resolvent estimate for $f(A)$, leading eventually to sectoriality of $f(A)$.   Thus we substantially extend one of the main results from \cite{GT}, Theorem $1.1$, and provide an alternative (and more direct) proof 
 based on function-theoretic arguments. On the other hand, it is sometimes of interest to drop the assumption of sectorial boundedness and to consider holomorphic semigroups that are merely bounded on the real half-axis. While the latter class was  handled successfully in \cite{GT}, it is out of reach for the approach of this paper (see the remark following Corollary \ref{TCbf13}).
On the other hand, our resolvent approach allows us to prove a new result on improving properties of Bernstein functions extending Theorem $1.3$ from \cite{GT}.

The following statement gives a flavour of our main results proved in Section \ref{pres} (see Corollaries \ref{TCbf13} and \ref{improve}). 
\begin{thm}  Assume that $F_j,  1 \le j \le n$, are Bernstein functions and
define
\[
F(z)=F_1(z^{\alpha_1})^{\beta_1} \dots F_n(z^{\alpha_n})^{\beta_n},  \qquad z \in \C_+,
\]
where
$0 < \alpha_j \le 1$, $0 < \beta_j \le 1$, and $\sum_{j=1}^{n} \alpha_j\beta_j \le 1$.  
Let $A$ be a sectorial operator of angle $\omega$, and assume that $A$ has dense range.
\begin{enumerate}[\rm1.]
\item  If $\omega \in [0,\pi/2)$ so that $-A$ generates a sectorially bounded holomorphic semigroup of angle $\pi/2-\omega$,
then $-F(A)$ also generates a sectorially bounded holomorphic semigroup of angle $\pi/2-\omega$.
\item  If  $\alpha \in (1/2, 1)$,  $\omega \in (0,\pi/(2\alpha))$ and 
\[
G(z) = F(z^\alpha), \qquad z \in \Sigma_{\pi/(2\alpha)},
\]
 then $-G(A)$ is the generator of a sectorially bounded holomorphic
$C_0$-semigroup of angle $\pi/2 - \alpha \omega$.
\end{enumerate}
\end{thm}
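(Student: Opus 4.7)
The overall strategy is to reduce both parts to the main sectoriality-preservation theorem of Section~\ref{pres}, which says that functions in the distinguished subclass $\mathcal{D}\subset\mathcal{NP}_+$ of Section~\ref{pnp} preserve $\operatorname{Sect}(\theta)$ for $\theta\in[0,\pi/2)$, provided $A$ has dense range. The two assertions then follow from (a) membership of $F$ (resp.\ $G$) in the scope of that theorem and (b) the standard equivalence between sectoriality of angle $<\pi/2$ and generation of a sectorially bounded holomorphic $C_0$-semigroup.

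The first step is to verify that $F\in\mathcal{D}$. Each Bernstein function $F_j$ lies in $\mathcal{D}$, as established in Section~\ref{BF}. The class $\mathcal{D}$ is closed under post-composition with $z\mapsto z^{\alpha_j}$ for $\alpha_j\in(0,1]$, under taking $\beta_j$-th powers for $\beta_j\in(0,1]$, and under finite products as long as the cumulative angle remains admissible. Since $F_j(z^{\alpha_j})^{\beta_j}$ maps $\Sigma_\varphi$ into $\Sigma_{\alpha_j\beta_j\varphi}$ for every $\varphi<\pi/2$, the assumption $\sum_{j=1}^n \alpha_j\beta_j\le 1$ ensures that $F$ preserves each such sector, lies in $\mathcal{NP}_+$, and satisfies the quantitative bounds defining $\mathcal{D}$. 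With $F\in\mathcal{D}$ in hand, the theorem of Section~\ref{pres} gives that $F(A)$ is sectorial of angle $\omega$, which is exactly Part~1.

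For Part~2, set $B:=A^\alpha$. Since $A$ has dense range and is sectorial of angle $\omega<\pi/(2\alpha)$, the standard fractional-power calculus tells us that $B$ is sectorial of angle $\alpha\omega\in(0,\pi/2)$ and also has dense range. Applying Part~1 to $B$ and $F$ yields that $-F(B)$ generates a sectorially bounded holomorphic $C_0$-semigroup of angle $\pi/2-\alpha\omega$. The compatibility of the holomorphic functional calculus with fractional powers (composition rule) gives $F(B)=F(A^\alpha)=G(A)$, completing Part~2.

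The principal technical obstacle lies in the first step, namely verifying that the specific composite structure $F=\prod_j F_j(z^{\alpha_j})^{\beta_j}$ remains inside $\mathcal{D}$ with effective constants. The definition of $\mathcal{D}$ is tailored precisely so that the resolvent representation of Lemma~\ref{In2} converges absolutely with quantitative control; propagating this from the single Bernstein factors through products and fractional compositions requires careful angle tracking (where the condition $\sum\alpha_j\beta_j\le 1$ enters decisively to keep the cumulative image inside $\mathbb{C}_+$) together with the correct asymptotics of $F$ at $0$ and $\infty$. Once this membership is secured, Part~1 is essentially automatic from the main theorem of Section~\ref{pres}, and Part~2 follows from Part~1 plus the composition rule $G(A)=F(A^\alpha)$.
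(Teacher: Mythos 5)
Your high-level strategy matches the paper's: reduce both parts to the result that $\mathcal{E}$-functions (in particular $\mathcal{D}$-functions) preserve sectoriality (Corollaries~\ref{TCbf12}--\ref{TCbf13}), establish membership of $F$ in $\mathcal{D}$ via Section~\ref{BF}, and deduce Part~2 from Part~1 using $A\mapsto A^\alpha$ together with the Composition Rule (this is precisely Corollary~\ref{improve}). Part~2 of your argument is essentially identical to the paper's.

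However, there is a genuine gap in your verification that $F\in\mathcal{D}$. You assert that ``$\mathcal{D}$ is closed under \dots finite products as long as the cumulative angle remains admissible,'' but this closure property is nowhere established and, as stated, is doubtful. The definition of $\mathcal{D}$ allows the constant $b$ in the pointwise estimate $|\operatorname{Im}f(te^{i\theta})|\le c\,t f'(bt)$ to depend on $f$, and the paper explicitly remarks (after Definition~\ref{defD*}) that for exactly this reason the classes $\mathcal{D}_\varpi^\pm$ are not closed under sums; the same obstruction affects products, since the inductive argument for $|\operatorname{Im}(f_1\cdots f_n)|$ requires a single common $b_\theta$ for all factors. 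The paper's actual route circumvents this: it first observes that each factor $F_j(z^{\alpha_j})^{\beta_j}$ is itself a Bernstein function (because $z^{\alpha_j},z^{\beta_j}\in\mathcal{BF}$ for exponents in $(0,1]$ and $\mathcal{BF}$ is closed under composition, Proposition~\ref{pbf}), for which the uniform choice $b_\theta=\cos\theta$ is available via~(\ref{BFD}). Proposition~\ref{Pp1A} and Corollary~\ref{Pp1} then give the differential inequality for the product of Bernstein functions by induction, placing the product in $\mathcal{D}_0^+\cap\mathcal{D}_\infty^+\subset\mathcal{D}$ whenever it lies in $\mathcal{NP}_+$. The constraint $\sum_j\alpha_j\beta_j\le1$ is used solely to ensure $F\in\mathcal{NP}_+$ (each factor lands in $\Sigma_{\pi\alpha_j\beta_j/2}$, so the product lands in $\C_+$), not to control any cumulative differential bound. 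Your proposal omits the key observation that each factor is Bernstein; without it, the unsupported product-closure claim leaves a real hole. Replacing that step with the route through Corollary~\ref{Pp2} makes the rest of your argument sound.
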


For more statements as above and their discussion, see Section \ref{pres} below.
Note that our approach also allows us to obtain a continuous counterpart of one of the main results in \cite{GT2} saying that ``barycentres'' of sectorially bounded holomorphic semigroups are Ritt operators (Theorem \ref{TCbf14}).

\section{Positive Nevanlinna--Pick functions} \label{pnp}



In the literature, a holomorphic function $g$ mapping the upper half-plane $\C^+ := \{\lambda\in\C : \operatorname{Im}\lambda>0\}$ to $\C^+ \cup \R$ is called a \emph{Nevanlinna function}, a \emph{Pick function}, a \emph{Herglotz function}, an \emph{R-function} or various combinations of these names.  Such a function is either a real constant, or it maps $\C^+$to $\C^+$.    We can easily transfer properties of the functions $g$ on $\C^+$ by putting $f(z) = -ig(iz)$.  We shall say that a holomorphic function $f: \C_+ \to \C_+$ mapping $(0,\infty)$ to $(0,\infty)$ is a {\it positive Nevanlinna--Pick function}, or an $\NPp$-function, and we will write $f \in \NPp$. We can ignore the degenerate case of the constant function $0$, so we consider only $\NPp$-functions as defined here.

In the literature on electrical networks, holomorphic functions from $\C_+$ to $\C_+$ are called \emph{positive} functions, and positive Nevanlinna--Pick functions are called \emph{positive real} functions.  However the positivity involved here is the very weak notion of preserving the sign of the real part of the variable.      We do not follow this terminology because it obscures the crucial property for us that our functions are holomorphic functions of a complex variable, and it is natural that they satisfy the strong notion of positivity by mapping positive real numbers to positive real numbers.  We hope that this slight abuse of terminology when working on $\C_+$ will not cause confusion.

It is clear that the class $\NPp$  is closed under sums, positive scalar multiples, reciprocals, and compositions, and that any function $f \in \NPp$ satisfies
\[
f(\overline z) = \overline{f(z)}, \qquad z \in \C_+.
\]
We shall use these properties without further comment.    Other properties of $\NPp$-functions are described in \cite{Br07} and \cite[Section 3.1]{GT2}, and in Proposition \ref{pnpp} below.   Note in particular that $f \in \NPp$ if and only if $f$ has a representation
\begin{equation} \label{npp1}
f(z) = \int_{-1}^1 \frac{2 z}{(1+z^2) + t(1-z^2)} \, \rho(dt), \qquad z \in \C_+,
\end{equation}
for some (unique) finite positive measure $\rho$ on $[-1,1]$ \cite[Theorem 3.1]{Br07}.  Here and throughout the paper, measures on subsets of $\R$ are Borel measures.  Under a change of variable this representation is converted to 
\begin{equation} \label{npp2}
f(z) = az + \frac{b}{z} + 2z \int_{0+}^\infty \frac{1+s^2}{s^2+z^2} \, \mu(ds),  \qquad z \in \C_+,
\end{equation} 
where $\mu$ is a finite positive measure on $(0,\infty)$ (see \cite{Ca32}).   Here and subsequently, we use the notation $\int_{0+}^{\infty}$ to denote an integral over $(0,\infty)$, following \cite{SchilSonVon2010}.   Each of these representations provides an affine isomorphism between the set of $\NPp$-functions $f$ with $f(1)=1$ and the set of probability measures on either $[-1,1]$ or $[0,\infty]$.  Since the extreme points of the latter set are the unit point masses, it follows that the extremal $\NPp$-functions with $f(1) = 1$ are of the form ${2z}((1+z^2) + t(1-z^2))^{-1}$ for $t \in [-1,1]$; or equivalently of the form $z(1+s^2)(z^2+s^2)^{-1}$ for $s \in [0,\infty)$ together with the identity function $z$.  Except for $t=\pm1$ (or $s=0$ and the case $z$), these extremal functions are not monotonic, but they are obtained from the identity function $z$ by simple operations (reciprocals, sums, and scalar multiples).  

The representation (\ref{npp2}) is related to Cauchy transforms.  Let $\nu$ be a positive measure on $\R$ with 
\[
\int_{-\infty}^\infty \frac{\nu(ds)}{1+|s|} < \infty,
\]
and define
\[
f(z) = \frac{1}{2\pi} \int_{-\infty}^\infty \frac{\nu(ds)}{z-is}, \qquad z \in \C_+.
\]
Then $f$ is a holomorphic function from $\C_+$ to $\C_+$, and it belongs to $\NPp$ if and only if $\nu$ is symmetric under the mapping $s \mapsto -s$.   If $\nu$ is symmetric and $\nu(ds) = 2\pi(1+s^2)\,\mu(ds)$ on $(0,\infty)$, $a=0$ and $b = \nu(\{0\})$, then we obtain (\ref{npp2}).

For an $\NPp$-function $f$, we shall let
\[
f(0+) := \lim_{t\to0+} f(t),  \qquad  f(\infty) := \lim_{t\to\infty} f(t),
\]
when the limits exist in $[0,\infty]$.  However the limits may not exist. Let $\mu$ be a finite positive measure on $(0,\infty)$. Consider the $\NPp$-function
\[
f(z) = \frac{2z}{\pi} \int_{0+}^\infty \frac{1+s^2}{s^2+z^2} \, \mu(ds),  \qquad z\in\C_+.
\]
When $\mu$ is extended symmetrically to $\R$ and $z = t+iy$, the real part of this function is the Poisson integral
$$
u(t,y) = \frac{1}{\pi} \int_{-\infty}^\infty \frac{t(1+s^2)}{t^2+(y+s)^2} \, \mu(ds), \quad t>0, \; y\in\R.
$$
Since $u(t) = f(t)$ for $t>0$ and $\mu$ is symmetric on $\R$, a theorem of Fatou and its converse (see \cite[Chapter IV]{Do74}, \cite[p.257]{Rud}, \cite{Lo43}, \cite{Do63}), show that the following are equivalent for $L \in [0,\infty)$:
\begin{enumerate} [\rm(i)]
\item  $\lim_{t\to0+} f(t) = L$;
\item  The non-tangential limits $\lim_{z\to0, z \in \Sigma_\theta} f(z) = L$ for $\theta \in (0, \pi/2)$;
\item  The derivative  $\lim_{a\to0+} \mu(0,a)/a = L$.
\end{enumerate}
For $L=\infty$, (iii) implies (i), but the converse does not hold \cite[Section III]{Rud}.
There are measures $\mu$ which fail to satisfy (iii).  For example, one may take the measure $\mu = \sum_{n=0}^\infty 2^{-n} \delta_{2^{-n}}$, which gives an $\NPp$-function $f$ which is bounded on $(0,\infty)$ but the limit in (iii) does not exist and accordingly $f(0+)$ does not exist.  The function $g(z)=f(1/z)$ belongs to $\NPp$ and $g(\infty)$ does not exist.

We will need some specific properties of $\NPp$-functions as follows.  Here and throughout the paper we use the notation
\[
\Sigma_\theta := \begin{cases}  \{z \in \C: |\arg(z)| < \theta\}, & \theta \in (0,\pi], \\ (0,\infty), &\theta=0. \end{cases}
\]
Note that $\Sigma_{\pi/2} = \C_+$.

\begin{prop}  \label{pnpp}
Let $f \in \NPp$.
\begin{enumerate}  [\rm1.]
\item \label{pnpp1} $f$ maps $\Sigma_\theta$ into itself for all $\theta \in (0,\pi/2)$.
\item \label{pnpp2}  If $\alpha \in (0,1)$ and $g_\alpha(z) = f(z^\alpha)^{1/\alpha}$ for $z \in \C_+$, then $g \in \NPp$.  
\item \label{pnpp3} Let $r, t>0$, $\theta \in (-\pi/2,\pi/2)$, and $\rho :=\min \left( \dfrac{r}{t},\dfrac{t}{r} \right) $. 
\begin{enumerate} [\rm(a)]
\item \label{pnpp3a} If $\cos 2\theta \ge \rho^2$, then
\[
\rho f(t)   \le  |f(re^{i\theta})| \le \frac{f(t)}{\rho}.
\]
In particular when $\theta=0$,
\begin{equation} \label{ts3}
\rho f(t) \le f(r) \le \frac{f(t)}{\rho} .
\end{equation}
\item \label{pnpp3b} If $\cos 2\theta \le \rho^2$, then
\[
 f(t)rt \left| \frac{\sin 2\theta}{t^2-r^2e^{2i\theta}} \right| \le |f(re^{i\theta})| \le \frac {f(t)}{rt} \left|\frac{t^2-r^2 e^{2i\theta}}{\sin 2 \theta} \right|.
\]
In particular when $r=t$,
\begin{equation} \label{ts4}
f(t) \cos\theta \le |f(te^{i\theta})| \le f(t)/\cos\theta.
\end{equation}
\end{enumerate} 
\item \label{pnpp4} Let $\theta \in (0,\pi/2)$.
\begin{enumerate} [\rm(a)]  
\item \label{pnpp4a} Assume that $f(0+)$ exists in $[0,\infty]$.  Then
\[
\lim_{z\to0,z\in \Sigma_\theta} f(z) = f(0+).
\]
\item \label{pnpp4b} Assume that $f(\infty)$  exists in $[0,\infty]$.  Then
\[
\lim_{|z|\to\infty,z\in \Sigma_\theta} f(z) = f(\infty).
\]
\end{enumerate}
\end{enumerate}
\end{prop}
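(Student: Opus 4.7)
Parts (\ref{pnpp1}) and (\ref{pnpp2}) rest on the Cauer representation (\ref{npp2}). For (\ref{pnpp1}) I would check that each of the three ``building blocks'' in (\ref{npp2}) carries $\Sigma_\theta$ into $\overline{\Sigma_\theta}$: the cases $z$ and $1/z$ are immediate, and for $(1+s^2)z/(s^2+z^2)$ one writes $z=re^{i\phi}$ with $\phi\in(0,\pi/2)$, observes that $\arg(s^2+r^2 e^{2i\phi})\in[0,2\phi]$ as $s$ varies in $[0,\infty)$, and concludes that $\arg((1+s^2)z/(s^2+z^2))\in[-\phi,\phi]$. Since $\overline{\Sigma_\theta}$ is a closed convex cone stable under integration against a positive measure, $f(z)\in\overline{\Sigma_\theta}$, and openness of the non-constant holomorphic $f$ yields the strict inclusion $f(z)\in\Sigma_\theta$. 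Part (\ref{pnpp2}) is then immediate: $z\in\C_+$ gives $z^\alpha\in\Sigma_{\alpha\pi/2}$, so (\ref{pnpp1}) forces $f(z^\alpha)\in\Sigma_{\alpha\pi/2}$, and taking the $(1/\alpha)$-th power returns the image to $\C_+$; positivity on $(0,\infty)$ is inherited from $f$.

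For part (\ref{pnpp3}) the idea is to prove the upper bounds termwise along (\ref{npp2}) and then derive the lower bounds from the self-duality $f\in\NPp\iff 1/f\in\NPp$: any upper bound $|h(re^{i\theta})|\le M\,h(t)$ valid for every $h\in\NPp$, applied to $h=1/f$, gives $|f(re^{i\theta})|\ge f(t)/M$. For (\ref{pnpp3a}), after reducing to $r\le t$ so that $\rho=r/t$, the termwise bound $|T(re^{i\theta})|\le T(t)/\rho$ is trivial for $T=z$ and $T=1/z$, and for $T=(1+s^2)z/(s^2+z^2)$ it reduces on squaring to
\[
s^4(t^4-r^4)+2s^2 r^2 t^2\bigl(t^2\cos 2\theta - r^2\bigr)\ge 0,
\]
which is ensured by $r\le t$ and $\cos 2\theta\ge\rho^2$. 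Integrating against the measure in (\ref{npp2}) and then applying duality yields (\ref{pnpp3a}), and $\theta=0$ recovers (\ref{ts3}).

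For (\ref{pnpp3b}), the corresponding termwise upper bound $|T(re^{i\theta})|\le (T(t)/(rt))\,|t^2-r^2 e^{2i\theta}|/|\sin 2\theta|$ collapses for $T=z$ and $T=1/z$ to $(t^2-r^2\cos 2\theta)^2\ge 0$ and $(r^2-t^2\cos 2\theta)^2\ge 0$, while for $T=(1+s^2)z/(s^2+z^2)$ the decisive identity is
\[
|s^2+r^2 e^{2i\theta}|^2\,|t^2-r^2 e^{2i\theta}|^2 - r^4(s^2+t^2)^2\sin^2 2\theta = \bigl(s^2 t^2-r^4+r^2\cos 2\theta\,(t^2-s^2)\bigr)^2,
\]
a perfect square, so the bound holds \emph{without any restriction} on $r,t,s,\theta$. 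Duality then delivers the lower bound in (\ref{pnpp3b}), and $r=t$ recovers (\ref{ts4}). The main obstacle throughout (\ref{pnpp3}) is spotting these algebraic identities, particularly the perfect-square rearrangement above; once found, the remaining manipulations are routine.

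Finally, part (\ref{pnpp4}) splits by the value of the limit. For $f(0+)\in\{0,\infty\}$ the inequality (\ref{ts4}) sandwiches $|f(re^{i\phi})|$ between $f(r)\cos\theta$ and $f(r)/\cos\theta$ uniformly for $|\phi|\le\theta<\pi/2$, so $|f(z)|$ inherits the limit of $f(r)$, and convergence in modulus to $0$ or $\infty$ is convergence in the one-point compactification of $\C$. For $f(0+)\in(0,\infty)$, the conclusion is exactly the equivalence (i)$\iff$(ii) of the Fatou-type theorem recalled in the paragraph preceding the proposition. Part (\ref{pnpp4b}) then follows by applying (\ref{pnpp4a}) to $h(z):=f(1/z)\in\NPp$, since $h(0+)=f(\infty)$ and $\Sigma_\theta$ is invariant under $z\mapsto 1/z$.
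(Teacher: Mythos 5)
Your proposal is correct and follows essentially the same route as the paper's (very terse) proof: part 3 is obtained there by proving the upper bounds for the extremal functions of the representation \eqref{npp2} and deducing the lower bounds by applying them to $1/f$ (the algebra you write out, including the perfect-square identity, is exactly what the paper outsources to \cite{Br07}, and it checks), part 4(a) is handled via \eqref{ts4} when $f(0+)\in\{0,\infty\}$ and via the classical Fatou non-tangential limit theorem otherwise, and part 4(b) follows by applying 4(a) to $f(1/z)$. The only divergence is part 1, where the paper cites Brune's theorem or the three-lines theorem while you argue directly from \eqref{npp2}; that argument is sound, since each extremal term sends $z$ with $\arg z=\phi$ into $\overline{\Sigma}_{|\phi|}$ and $f(z)\neq 0$, so nothing essential is missing.
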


\begin{proof}  (\ref{pnpp1}) was proved in \cite[Theorem VI, p.37]{Br31}.  It can alternatively be deduced from the three-lines theorem. (\ref{pnpp2}) follows easily from (\ref{pnpp1}).  (\ref{pnpp3}) was proved in \cite[Theorem 3.2]{Br07} (for $t=1$, and the general case follows easily), improving earlier results by providing sharp bounds.  The upper bounds are obtained by establishing them for the extremal functions and applying (\ref{npp2}), and the lower bounds are deduced by applying the upper bounds to $1/f$.

When $f(0+) \in \{0,\infty\}$, (\ref{pnpp4a}) follows from (\ref{pnpp3b}).  The general case of (\ref{pnpp4a}) is a classical fact \cite[Theorem IV, p.49]{Do74}, and (\ref{pnpp4b}) follows from (\ref{pnpp4a}) by considering $f(1/z)$.  
\end{proof}

The following inequality for complex numbers is elementary.  If $\theta\ge0$, $\psi\ge0$ and $\theta+\psi<\pi$, then
\begin{equation}\label{ts1}
|z+\lambda|\ge \cos\big((\theta+\psi)/2\big)\,(|z|+|\lambda|),\quad
z\in \Sigma_\theta,\quad \lambda\in \Sigma_\psi.
\end{equation}
Let $f \in \NPp$. For $\theta\in [0,\pi)$, $\psi\in [0,\pi/2)$ such that
$\theta+\psi<\pi$, $z\in \Sigma_\theta$ and  $\lambda\in \Sigma_\psi$, Proposition \ref{pnpp}(\ref{pnpp1}), (\ref{ts1}) and (\ref{ts4}) give
\begin{align}\label{Pest}
|z+f(\lambda)|&\ge \cos\big((\theta+\psi)/2\big)\,(|z|+|f(\lambda)|) \\
&\ge  \cos\psi \, \cos\big((\theta+\psi)/2\big) \, (|z|+f(|\lambda|)) \nonumber.
\end{align}

The following lemma gives a representation of the resolvent of an $\NPp$-function in a form which will be useful when we consider sectorial operators $A$ instead of scalars $\lambda$.   We shall see in Proposition \ref{limits} and Corollary \ref{limit0} that the assumptions of existence of $f(\infty)$ and $f(0+)$ are natural for the study of sectorial operators.

\begin{lemma}\label{In2}
Let $f\in \NPp$, $q>2$, $\lambda\in \Sigma_{\pi/q}$, and $z \in \Sigma_{\pi- \pi/q}$.  
\begin{enumerate}[\rm1.]
\item  Assume that $f(\infty)$ exists.
Then 
\begin{multline}\label{R11+}
(z+f(\lambda))^{-1} \\
=\frac{1}{z+f(\infty)}
+\frac{q}{\pi}\int_0^\infty
\frac{\operatorname{Im}f(te^{i\pi/q})\,t^{q-1}}
{(z+f(te^{i\pi/q}))(z+f(te^{-i\pi/q}))(\lambda^q+t^q)} \,dt,
\end{multline}
where the integral may be improper.
\item  Assume that $f(0+)$ exists.
Then 
\begin{multline} \label{R11++}
(z+f(\lambda))^{-1} \\
=\frac{1}{z+f(0+)}
- \frac{q}{\pi}\int_0^\infty
\frac{\operatorname{Im}f(te^{i\pi/q})\,\lambda^q}
{(z+f(te^{i\pi/q}))(z+f(te^{-i\pi/q}))(\lambda^q+t^q)t} \,dt,
\end{multline}
where the integral may be improper.
\end{enumerate}
\end{lemma}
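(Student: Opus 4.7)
The strategy is to apply Cauchy's integral formula to the auxiliary function
\[
g(w) := \bigl(z + f(w^{1/q})\bigr)^{-1}, \qquad w \in D := \C \setminus (-\infty, 0],
\]
where $w^{1/q}$ denotes the principal $q$th root. For $w\in D$ one has $w^{1/q}\in\Sigma_{\pi/q}$, so Proposition~\ref{pnpp}(\ref{pnpp1}) yields $f(w^{1/q})\in\Sigma_{\pi/q}$; combined with $z\in\Sigma_{\pi-\pi/q}$ and (\ref{ts1}) (since $|\arg z|+\pi/q$ stays strictly below $\pi$), this gives $|z+f(w^{1/q})|\ge c(|z|+|f(w^{1/q})|)$ on each closed subsector of $D$. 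Hence $g$ is holomorphic and locally bounded on $D$, and $g(\lambda^q)=(z+f(\lambda))^{-1}$.

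Both identities are obtained by integrating $g$ against an appropriate kernel on the keyhole contour
\[
\Gamma_{R,\delta,\epsilon}=\bigl\{|\zeta|=R,\,|\arg\zeta|\le\pi-\epsilon\bigr\}\cup\bigl\{\arg\zeta=\pm(\pi-\epsilon),\,\delta\le|\zeta|\le R\bigr\}\cup\bigl\{|\zeta|=\delta,\,|\arg\zeta|\le\pi-\epsilon\bigr\}
\]
oriented counterclockwise around $w_0:=\lambda^q\in\Sigma_\pi$. Letting $\epsilon\downarrow 0$, the two straight segments tend to integrals along the upper and lower sides of the slit $(-\infty,0]$, and the Schwarz reflection $f(t^{1/q}e^{-i\pi/q})=\overline{f(t^{1/q}e^{i\pi/q})}$ converts the jump into
\[
g(-s-i0)-g(-s+i0)=\frac{2i\,\operatorname{Im}f(s^{1/q}e^{i\pi/q})}{(z+f(s^{1/q}e^{i\pi/q}))(z+f(s^{1/q}e^{-i\pi/q}))}.
\]

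For (1) I take the Cauchy kernel $1/(\zeta-w_0)$, whose residue at $w_0$ is $g(w_0)$. The large arc contributes $(z+f(\infty))^{-1}$ as $R\to\infty$ because the existence of $f(\infty)$ together with Proposition~\ref{pnpp}(\ref{pnpp4b}) forces $g(\zeta)\to(z+f(\infty))^{-1}$ uniformly on $\{|\zeta|=R\}\cap\Sigma_{\pi-\epsilon}$, while the small arc contribution is $O(\delta)$ by the uniform bound on $g$; substituting $s=t^q$ in the remaining segment integral produces (1). For (2) I replace the kernel by $w_0/[(\zeta-w_0)\zeta]=1/(\zeta-w_0)-1/\zeta$: its residue at the unique interior pole $w_0$ is again $g(w_0)$, the large-arc integrand becomes $O(1/R^2)$ on an arc of length $O(R)$ and so vanishes in the limit $R\to\infty$ \emph{without} requiring $f(\infty)$ to exist, and the small arc picks up $(z+f(0+))^{-1}$ as $\delta\downarrow 0$ because Proposition~\ref{pnpp}(\ref{pnpp4a}) gives $g(\zeta)\to(z+f(0+))^{-1}$ inside $\Sigma_{\pi-\epsilon}$. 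The extra factor $1/\zeta$ in the kernel adds a $1/s$ to the segment integrand, which after $s=t^q$ becomes exactly the $1/t$ occurring on the right-hand side of (2).

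The main technical obstacle is the passage to the limits $\epsilon\downarrow 0$, $\delta\downarrow 0$ and $R\to\infty$ for the segment integrals, which need not converge absolutely, as the statement flags by allowing them to be improper. Uniform control on each compact subinterval $[\eta,R_0]\subset(0,\infty)$ follows by combining the sector lower bound (\ref{Pest}) for $|z+f(\cdot)|$ with the two-sided estimate $f(t)\cos(\pi/q)\le|f(te^{\pm i\pi/q})|\le f(t)/\cos(\pi/q)$ from (\ref{ts4}) and the trivial $|\operatorname{Im}f|\le|f|$. The boundary asymptotics $g(\zeta)\to(z+f(\infty))^{-1}$ and $g(\zeta)\to(z+f(0+))^{-1}$ inside $\Sigma_{\pi-\epsilon}$ furnished by parts (\ref{pnpp4a}),(\ref{pnpp4b}) of Proposition~\ref{pnpp} then justify removing the cutoffs and yield (1) and (2) as the stated (possibly improper) Riemann integrals.
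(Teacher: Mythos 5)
Your argument is correct, and for part (1) it is essentially the paper's proof: both apply Cauchy's theorem to $\zeta\mapsto (z+f(\zeta^{1/q}))^{-1}$ on a contour surrounding $\lambda^q$, deform to the two sides of the cut $(-\infty,0]$, use the reflection $f(\overline w)=\overline{f(w)}$ to turn the jump into $\operatorname{Im}f(t e^{i\pi/q})$, and control the outer circle via Proposition \ref{pnpp}(\ref{pnpp4b}) applied to $\zeta^{1/q}\in\Sigma_{\pi/q}$ with $\pi/q<\pi/2$. Two points where you diverge, both harmless and arguably tidier: (a) you absorb the constant term by showing the large arc itself tends to $(z+f(\infty))^{-1}$ (writing $g=(z+f(\infty))^{-1}+h_R$ with $\sup_{|\zeta|=R}|h_R|\to0$), whereas the paper splits into the cases $f(\infty)=\infty$ and $f(\infty)<\infty$ and, in the latter, subtracts the constant inside the Cauchy integral before deforming; (b) for part (2) you give a direct contour proof with the kernel $w_0/[\zeta(\zeta-w_0)]$, whose $-1/\zeta$ piece harvests $(z+f(0+))^{-1}$ from the small circle via Proposition \ref{pnpp}(\ref{pnpp4a}) and whose $O(R^{-2})$ decay kills the outer arc without assuming $f(\infty)$ exists, while the paper simply deduces (2) from (1) by the substitution $f\mapsto f(1/\cdot)$, $\lambda\mapsto 1/\lambda$, $t\mapsto 1/t$. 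Your direct route for (2) costs a second contour computation but makes the independence of the two hypotheses transparent; the paper's route is shorter but requires checking the substitution transforms every term correctly. The treatment of the improper integrals is adequate: the partial integrals $\int_\delta^R$ converge because every other term in the contour identity has a limit, and your local bounds from (\ref{Pest}) and (\ref{ts4}) ensure the integrand is locally bounded on $(0,\infty)$, which is all that is needed.
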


\begin{proof}
We first consider the case when $f(\infty)=\infty$.  Take $\theta \in (q|\arg\lambda|,\pi)$, so $\lambda^q \in \Sigma_\theta$.  By Proposition \ref{pnpp}(\ref{pnpp1}),
\[
f(\zeta^{1/q})\in \overline{\Sigma}_{\theta/q},\quad
\zeta\in \overline{\Sigma}_\theta \setminus \{0\},
\]
so $(z+f(\zeta^{1/q}))^{-1}$ is holomorphic and bounded for $\zeta \in \Sigma_\theta$.  For  $R>|\lambda|^q$, let $\gamma_R$ be the boundary of $\{\zeta \in \Sigma_\theta : |\zeta|<R$\}.  By Cauchy's theorem,
\begin{equation}\label{Ca+}
(z+f(\lambda))^{-1}=
\frac{1}{2\pi i}
\int_{\gamma_R}\,
\frac{d\zeta}{(z+f(\zeta^{1/q}))(\zeta-\lambda^q)}.
\end{equation}
Deforming the contour $\gamma_R$ to the negative semi-axis, we obtain that
\begin{align} \label{RR+}
(z+f(\lambda))^{-1} &=\frac{1}{2\pi i}
\int_{|\zeta|=R}\,\frac{d\zeta}{(z+f(\zeta^{1/q}))(\zeta-\lambda^q)}
\\
&\phantom{XXX} -\frac{1}{2\pi i}\int_0^R\,
\frac{ds}{(z+f(s^{1/q}e^{i\pi/q}))(s+\lambda^q)}  \nonumber
\\
&\phantom{XXX} +\frac{1}{2\pi i}
\int_0^R\,\frac{ds}{(z+f(s^{1/q}e^{-i\pi/q}))(s+\lambda^q)}  \nonumber
\\
&=\frac{1}{2\pi i}\int_{|\zeta|=R}\,
\frac{d\zeta}{(z+f(\zeta^{1/q}))(\zeta-\lambda^q)}  \nonumber
\\
&\phantom{XXX} +\frac{1}{\pi}\int_0^R\,
\frac{\operatorname{Im}f(s^{1/q}e^{i\pi/q})}
{(z+f(s^{1/q}e^{i\pi/q}))(z+f(s^{1/q}e^{-i\pi/q}))
(s+\lambda^q)} \, ds. \nonumber
\end{align}
Since $f(\infty)=\infty$ and $q>2$, Proposition \ref{pnpp}(\ref{pnpp4b}) gives 
\[
\lim_{|\zeta|\to\infty} \,\frac{1}{f(\zeta^{1/q})}=0
\quad \mbox{uniformly for}\quad \zeta\in \Sigma_\pi.
\]
Hence
\begin{equation}\label{infty+}
\lim_{R\to\infty}\,\int_{|\zeta|=R}\,
\frac{d\zeta}{(z+f(\zeta^{1/q}))(\zeta-\lambda^q)}=0.
\end{equation}

Letting $R\to\infty$ in (\ref{RR+}), we obtain as improper integrals:
\begin{align*}
(z+f(\lambda))^{-1}&=\frac{1}{\pi}\int_0^\infty\,
\frac{\operatorname{Im}f(s^{1/q}e^{i\pi/q})}
{(z+f(s^{1/q}e^{i\pi/q}))(z+f(s^{1/q}e^{-i\pi/q}))
(s+\lambda^q)} \,ds
\\
&=\frac{q}{\pi}\int_0^\infty\,
\frac{t^{q-1}\,\operatorname{Im}f( te^{i\pi/q})}
{(z+f(te^{i\pi/q}))(z+f(t e^{-i\pi/q}))
(t^q+\lambda^q)} \,dt,
\end{align*}
i.e.\ (\ref{R11+}) holds.
 
When $f(\infty)<\infty$, \eqref{R11+} can be proved in a similar manner.
Instead of (\ref{Ca+}), we write
\begin{multline*}
(z+f(\lambda))^{-1}-(z+f(\infty))^{-1}
\\
=\frac{1}{2\pi i}
\int_{\gamma_R}\,
\left(
\frac{1}{z+f(\zeta^{1/q})}-\frac{1}{z+f(\infty)}\right)\,
\frac{d\zeta}{\zeta-\lambda^q},
\end{multline*}
and, instead of (\ref{infty+}), we  use Proposition \ref{pnpp}(\ref{pnpp4b}) to obtain that
\begin{align*}
\lefteqn{\lim_{R\to\infty}\,\int_{|\zeta|=R}\, \left(
\frac{1}{z+f(\zeta^{1/q})}-\frac{1}{z+f(\infty)}\right)\,
\frac{d\zeta}{\zeta-\lambda^q}}
\\
&\phantom{XXX}=\frac{1}{z+f(\infty)}\lim_{R\to\infty}\,\int_{|\zeta|=R}\,
\frac{f(\infty)-f(\zeta^{1/q})}{z+f(\zeta^{1/q})}\,
\frac{d\zeta}{\zeta-\lambda^q}\\
&\phantom{XXX}=0.
\end{align*} 
This proves the statement (\ref{R11+}).

The statement (\ref{R11++}) can be deduced from (\ref{R11+}) by replacing $f(\zeta)$ by $f(1/\zeta)$, $\lambda$ by $1/\lambda$ and $t$ by $1/t$.
\end{proof}

In order to estimate operator-valued integrals that will arise in Section \ref{pres}, we will need to estimate integrals such as the one in (\ref{R11+}) in specific ways.  Let $q>2$, $\lambda\in \Sigma_{\theta}$ where $\theta \in [0,\pi/q)$, and $z \in \Sigma_\psi$ where $\psi \in (0,\pi- \pi/q)$.  Using (\ref{Pest}), we can estimate as follows:
\begin{align}  \label{integ}
&\\
\lefteqn{\left| \int_0^\infty
\frac{\operatorname{Im}f(te^{i\pi/q})\,t^{q-1} }
{(z+f(te^{i\pi/q}))(z+f(te^{-i\pi/q}))(\lambda^q+t^q)} \,dt \right|} \nonumber\\
&\le \int_0^\infty
\frac{|\operatorname{Im}f(te^{i\pi/q})| \,t^{q-1}}
{|z+f(te^{i\pi/q})| \, |z+f(te^{-i\pi/q})|\,|\lambda^q+t^q|} \,dt \nonumber\\
& \le \frac{1}{\cos^2((\psi+\pi/q)/2) \cos^2(\pi/q) \cos(\theta q/2)} \int_0^\infty \frac{|\operatorname{Im}f(te^{i\pi/q})|}
{(|z|+f(t))^2 t} \,dt.  \nonumber
\end{align}
Here we are assuming existence of the integrals in appropriate senses.  We shall need that the final integral is not only finite but also that it satisfies a further estimate given in the following definition.

\def\E{\mathcal{E}}
\begin{defn} \label{defcong}
We shall say that $f \in \mathcal{E}$ if $f \in \NPp$ and, for each $\theta \in (0,\pi/2)$, there exists $\kappa_\theta(f)$ such that
\begin{equation} \label{tilde}
J_\theta(r;f) := \int_0^\infty \frac{|\operatorname{Im}f(te^{i\theta})|}
{(r+f(t))^2 t} \,dt \le  \frac{\kappa_\theta(f)}{r}, \qquad r>0.
\end{equation}

We shall say that $f \in \mathcal{E}_0$ if $f \in \NPp$ and, for each $\theta\in(0,\pi/2)$ and for some (or equivalently, all) $a\in(0,\infty)$, there exists $C_{a,\theta}$ such that
\[
\int_0^a \frac{|\operatorname{Im}f(te^{i\theta})|}
{(r+f(t))^2 t} \,dt  \le \frac{C_{a,\theta}}{r}, \qquad  r >0.
\]
 
Similarly, we say that $f \in \mathcal{E}_\infty$ if $f \in \NPp$ and, for each $\theta\in(0,\pi/2)$ and some/all $a\in(0,\infty)$, there exists $K_{a,\theta}$ such that
\[
\int_a^\infty \frac{|\operatorname{Im}f(te^{i\theta})|}
{(r+f(t))^2 t} \,dt  \le \frac{K_{a,\theta}}{r}, \qquad  r >0.
\]
\end{defn}

Note that $\mathcal{E} = \mathcal{E}_0 \cap \mathcal{E}_\infty$.  Moreover $\mathcal{E}_0$ and $\mathcal{E}_\infty$ are closed under sums.  This follows from
\[
\frac{|\operatorname{Im} \left(f(te^{i\theta}) + g(te^{i\theta})\right)|}{(r + f(t) + g(t))^2 }
\le \frac{|\operatorname{Im} f(te^{i\theta})|}{(r + f(t))^2} + 
\frac{|\operatorname{Im} g(te^{i\theta})|}{(r + g(t))^2},
\]
when $r>0$, $f(t)>0$ and $g(t)> 0$.

Simple changes of variable, and applications of (\ref{ts3}) and (\ref{ts4}), show that the following are equivalent:
\begin{enumerate}[\rm(i)]
\item $f \in \E_0$,
\item $1/f \in \E_0$,
\item $f(1/z) \in \E_\infty$,
\item $1/f(1/z) \in \E_\infty$.
\end{enumerate}
If $f \in \E_\varpi$ where $\varpi \in \{0,\infty\}$, and $\alpha\in(0,1)$, then $f(z^\alpha) \in \E_\varpi$.

It follows from the remarks above that if $f$ and $g$ are in $\E$, then so are $f+g$, $1/f$, $f(1/z)$, $1/f(1/z)$ and $f(z^\alpha)$ for $\alpha \in (0,1)$.

\begin{prop} \label{Elim}
If $f \in \mathcal{E}$, then $f(0+)$ and $f(\infty)$ exist in $[0,\infty]$.
\end{prop}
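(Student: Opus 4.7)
Because of the equivalences listed just above the proposition---in particular $f \in \mathcal E_\infty \iff f(1/\cdot) \in \mathcal E_0$, under which $\lim_{z\to 0+} f(1/z) = f(\infty)$---I would reduce the proposition to the single assertion: $f \in \mathcal E_0$ implies $f(0+)$ exists in $[0,\infty]$. Assuming this, fix any $r > 0$ and set $g := (r+f)^{-1}$. Since $\operatorname{Re}(r+f) \ge r$ on $\C_+$, the function $g$ lies in $\mathcal{NP}_+$ with $|g| \le 1/r$ on $\C_+$; and existence of $f(0+)$ in $[0,\infty]$ is equivalent to existence of $g(0+)$ in $[0,1/r]$, via $f(0+) = 1/g(0+) - r$ (with $1/0 := \infty$). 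From Proposition~\ref{pnpp}(\ref{pnpp1}) and \eqref{ts4} one has $\operatorname{Re} f(te^{i\theta}) \ge f(t)\cos^2\theta$, whence $|r+f(te^{i\theta})| \ge \cos^2\theta\,(r+f(t))$ and therefore
\[
\int_0^a \frac{|\operatorname{Im} g(te^{i\theta})|}{t}\, dt \;=\; \int_0^a \frac{|\operatorname{Im} f(te^{i\theta})|}{|r+f(te^{i\theta})|^2\, t}\, dt \;\le\; \frac{C_{a,\theta}(f)}{r\cos^4\theta}
\]
for every $\theta \in (0,\pi/2)$ and every $a > 0$, by the $\mathcal E_0$-hypothesis on $f$.

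I would then argue by contradiction. Suppose $g(0+)$ does not exist, so that $\gamma_1 := \liminf_{t\to 0+} g(t) < \limsup_{t\to 0+} g(t) =: \gamma_2$; fix $\gamma_1 < \gamma_1' < \gamma_2' < \gamma_2$. Continuity of $g$ on $(0,\infty)$ and the intermediate value theorem produce pairwise disjoint intervals $[a_k, b_k] \subset (0, a)$ with $b_k \downarrow 0$, $g(a_k) = \gamma_1'$, and $g(b_k) = \gamma_2'$. The aim is to exhibit $\theta \in (0,\pi/2)$ and $\eta > 0$, both independent of $k$, with $\int_{a_k}^{b_k} |\operatorname{Im} g(te^{i\theta})|/t\, dt \ge \eta$; summing over $k$ then forces $\int_0^a |\operatorname{Im} g(te^{i\theta})|/t\, dt = \infty$, contradicting the estimate of the previous paragraph.

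For the lower bound, path-integrating $g'$ along the arc $\alpha \mapsto te^{i\alpha}$ and taking imaginary parts yields $\operatorname{Im} g(te^{i\theta}) = t\int_0^\theta \operatorname{Re}(e^{i\alpha} g'(te^{i\alpha}))\, d\alpha$. Dividing by $t$, integrating over $[a_k, b_k]$, and applying Fubini together with $\tfrac{d}{dt}g(te^{i\alpha}) = e^{i\alpha}g'(te^{i\alpha})$ gives the key identity
\[
\int_{a_k}^{b_k} \frac{\operatorname{Im} g(te^{i\theta})}{t}\, dt \;=\; \int_0^\theta \operatorname{Re}\bigl(g(b_k e^{i\alpha}) - g(a_k e^{i\alpha})\bigr)\, d\alpha.
\]
Cauchy's estimate on the disk $B(be^{i\beta}, b\cos\beta) \subset \C_+$ (whose radius is $\operatorname{dist}(be^{i\beta}, i\R)$) gives $|g'(be^{i\beta})| \le (1/r)/(b\cos\beta)$, so along the arc
\[
|g(be^{i\alpha}) - g(b)| \;\le\; \int_0^\alpha b\,|g'(be^{i\beta})|\, d\beta \;\le\; \frac{1}{r}\int_0^\alpha \frac{d\beta}{\cos\beta},
\]
\emph{uniformly in} $b > 0$. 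Choose $\theta \in (0,\pi/2)$ small enough that $\tfrac{1}{r}\int_0^\theta d\beta/\cos\beta \le (\gamma_2' - \gamma_1')/4$; the first-paragraph bound still applies since it is valid for every $\theta$. Two applications of the triangle inequality then yield $\operatorname{Re}(g(b_k e^{i\alpha}) - g(a_k e^{i\alpha})) \ge (\gamma_2' - \gamma_1')/2$ for all $\alpha \in [0,\theta]$ and all $k$; combined with the identity,
\[
\int_{a_k}^{b_k} \frac{|\operatorname{Im} g(te^{i\theta})|}{t}\, dt \;\ge\; \left|\int_{a_k}^{b_k} \frac{\operatorname{Im} g(te^{i\theta})}{t}\, dt\right| \;\ge\; \theta\,\frac{\gamma_2' - \gamma_1'}{2} =: \eta > 0
\]
uniformly in $k$, as required. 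The main technical obstacle is securing the uniformity in $b$ (hence in $k$) of the modulus-of-continuity estimate for $g$ along rays; that this follows from the single global bound $|g| \le 1/r$ together with the standard Cauchy estimate is precisely what allows one sufficiently small $\theta$ to work simultaneously for every interval $[a_k,b_k]$.
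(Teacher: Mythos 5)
Your proof is correct, and it takes a genuinely different route from the paper's. The paper simply reuses the finite-$R$ contour formula (\ref{RR+}) from the proof of Lemma~\ref{In2}: with $z$ and $R$ fixed, both terms converge as $\lambda \to 0+$ by dominated convergence (the second term being dominated exactly because the integrand in (\ref{tilde}) is integrable), so $(z+f(\lambda))^{-1}$ has a radial limit and hence $f(0+)$ exists. Your argument avoids the contour representation entirely. You first normalise to the bounded function $g=(r+f)^{-1}\in\NPp$, observe that $\E_0$ plus the lower bound $|r+f(te^{i\theta})|\ge \cos^2\theta\,(r+f(t))$ gives $\int_0^a |\operatorname{Im} g(te^{i\theta})|\,dt/t<\infty$, and then show directly that an oscillating radial limit would force this integral to diverge, via the Fubini-type identity $\int_{a_k}^{b_k}\operatorname{Im} g(te^{i\theta})\,dt/t=\int_0^\theta\operatorname{Re}\bigl(g(b_ke^{i\alpha})-g(a_ke^{i\alpha})\bigr)\,d\alpha$ and a $b$-uniform modulus of continuity along rays coming from Cauchy's estimate on the disk $B(be^{i\beta}, b\cos\beta)\subset\C_+$. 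What the paper's proof buys is brevity, since all the analytic work was already done in Lemma~\ref{In2}. What yours buys is a self-contained, more transparent mechanism: it isolates the purely qualitative fact actually needed (finiteness of $\int_0^a |\operatorname{Im} g(te^{i\theta})|\,dt/t$ for a single $\theta$, not the quantitative $\kappa/r$ bound in Definition~\ref{defcong}), and makes explicit that $\E_0$ alone controls the $0+$ limit and $\E_\infty$ alone the $\infty$ limit, a split the paper's proof also performs implicitly. All the individual steps check out --- in particular the construction of disjoint oscillation intervals by the intermediate value theorem, the reduction of $f(0+)$ to $g(0+)$ via the homeomorphism $x\mapsto 1/(r+x)$ of $[0,\infty]$ onto $[0,1/r]$, and the choice of $\theta$ small enough (independent of $k$) via $\int_0^\theta \sec\beta\,d\beta\to 0$.
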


\begin{proof}
Consider the final formula (\ref{RR+}) where $z$ is fixed (for example, $z=R=1$).  As $\lambda \to 0+$, the first integral converges by the Dominated Convergence Theorem.  As in (\ref{integ}) the integrand of the second integral is dominated by the integrand in (\ref{tilde}) (for $r=1$) and the integral there is finite.  By the Dominated Convergence Theorem, the second integral converges to a limit as $\lambda\to0+$.  Hence
\[
\lim_{\lambda\to0+} (z+f(\lambda))^{-1}
\]
exists in $\C$, so $f(0+)$ exists.   The existence of $f(\infty)$ can now be deduced by considering $f(1/z)$.
\end{proof}

Since there exist $f \in \NPp$ for which $f(0+)$ and $f(\infty)$ do not exist, Proposition \ref{Elim} shows that $\mathcal{E} \ne \NPp$.  It will follow from Theorem \ref{ops} and Corollary \ref{TCbf13} that there exist $f \in \NPp$ such that $f(0+)$ and $f(\infty)$ both exist but $f \notin \mathcal{E}$.

In the remainder of this section, we will identify some conditions which imply that a given $f \in \NPp$ belongs to $\E$.  See Propositions \ref{sphere} and \ref{lemDE}.

Recall that the \emph{spherical derivative} $Sf$ of a holomorphic function $f$ on $\mathbb C_+$  is defined as
\[
(Sf)(z):=\frac{|f'(z)|}{1+|f(z)|^2}, \qquad z \in \mathbb C_+.
\]
It is a basic tool in geometric function theory; see \cite{Pom}, for example.  

Let $\mathcal{S}$ be the set of all $\NPp$-functions $f$ such that, for each $\theta \in (0,\pi/2)$, there exists $C_\theta$ such that 
\[
\int_{\Sigma_\theta} \,(S (f/r) )(z)\,\frac{dm(z)}{|z|}  \le C_\theta, \qquad r>0,
\]
where $dm$ is the area measure.

\begin{prop} \label{sphere}
Let $f\in \mathcal{NP}_{+}$, $r>0$, and $\theta\in (0,\pi/2)$.   Then
\begin{align*}
J_\theta(r;f) &\le \frac{1}{2 \cos^2\theta}\int_{-\theta}^{\theta}\int_0^\infty
\,\frac{|f'(te^{is})|}{r^2+ |f(te^{is})|^2}\,dt \,ds  \\
&= \frac{1}{2 r \cos^2\theta } \int_{\Sigma_\theta} \,(S (f/r) )(z)\,\frac{dm(z)}{|z|}.
\end{align*}
Hence, $\mathcal{S} \subset \mathcal{E}$.
\end{prop}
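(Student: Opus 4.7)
The plan is to combine a contour-integration identity for $\operatorname{Im} f(te^{i\theta})$ along the arc $s \mapsto te^{is}$ with the pointwise bound (\ref{ts4}) on $|f(te^{is})|$ in terms of $f(t)$, after which the asserted inequality follows by Fubini and a trivial dropping of a cross term.

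\textbf{Step 1 (arc representation of the imaginary part).} Because $f \in \NPp$ satisfies $f(\bar z) = \overline{f(z)}$, we have $|f(te^{-i\theta})| = |f(te^{i\theta})|$ and
\[
2 i \operatorname{Im} f(te^{i\theta}) = f(te^{i\theta}) - f(te^{-i\theta}) = \int_{-\theta}^{\theta} f'(te^{is})\, i t e^{is}\, ds .
\]
Taking moduli and using the triangle inequality gives
\[
|\operatorname{Im} f(te^{i\theta})| \le \frac{t}{2}\int_{-\theta}^{\theta} |f'(te^{is})|\, ds .
\]

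\textbf{Step 2 (insert into $J_\theta$ and swap).} Inserting this bound into the definition of $J_\theta(r;f)$, cancelling the factor $t$ and applying Fubini yields
\[
J_\theta(r;f) \le \frac{1}{2}\int_{-\theta}^{\theta}\int_0^\infty \frac{|f'(te^{is})|}{(r+f(t))^2}\, dt\, ds .
\]

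\textbf{Step 3 (replace $f(t)$ by $|f(te^{is})|$ in the denominator).} For $|s|\le\theta<\pi/2$, applying (\ref{ts4}) with $\theta$ replaced by $s$ gives $\cos\theta\,|f(te^{is})|\le\cos s\,|f(te^{is})|\le f(t)$, so $f(t)^2\ge \cos^2\theta\,|f(te^{is})|^2$.  Since also $r^2\ge \cos^2\theta\,r^2$, adding these and then using $(r+f(t))^2\ge r^2+f(t)^2$ (drop the nonnegative cross term) produces
\[
(r+f(t))^2 \ge \cos^2\theta\bigl(r^2 + |f(te^{is})|^2\bigr) .
\]
Substituting into the bound from Step 2 gives the first inequality of the proposition.

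\textbf{Step 4 (rewrite in terms of the spherical derivative).} In polar coordinates $z=te^{is}$ one has $dm(z)=t\,dt\,ds$ and $|z|=t$, so $dt\,ds = dm(z)/|z|$.  Since
\[
(S(f/r))(z) = \frac{r\,|f'(z)|}{r^2+|f(z)|^2},
\]
the double integral in Step 2 equals $r^{-1}\int_{\Sigma_\theta}(S(f/r))(z)\,dm(z)/|z|$, yielding the claimed equality.  If $f\in\mathcal S$ then this integral is bounded by $C_\theta$, so $J_\theta(r;f)\le C_\theta/(2r\cos^2\theta)$, which gives $f\in\mathcal E$ with $\kappa_\theta(f)=C_\theta/(2\cos^2\theta)$.

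The only subtle point is the denominator estimate in Step 3; everything else is bookkeeping (the arc identity, Fubini, and a change of variables).  The two ingredients that make the denominator step work are (\ref{ts4}) and the trivial but essential fact $(r+f(t))^2 \ge r^2 + f(t)^2$, which prevents losing an extra factor when combining the bounds on $r$ and on $f(t)$ separately.
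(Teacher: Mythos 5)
Your proof is correct and follows essentially the same route as the paper: the arc identity $2i\operatorname{Im}f(te^{i\theta})=it\int_{-\theta}^{\theta}e^{is}f'(te^{is})\,ds$, Fubini, the bound $\cos s\,|f(te^{is})|\le f(t)$ from (\ref{ts4}), and the polar-coordinate rewriting via the spherical derivative. The only (immaterial) difference is the order in which you assemble the denominator estimate — you drop the cross term first and then insert $\cos^2\theta$, while the paper substitutes $\cos s\,|f(te^{is})|$ for $f(t)$ first — and both yield the same constant $1/(2\cos^2\theta)$.
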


\begin{proof}
Since
\[
2i \operatorname{Im}f(te^{i\theta})= f(te^{i\theta}) - f(te^{- i\theta}) =
it\int_{-\theta}^{\theta} e^{is} f'(te^{is})\,ds,
\]
we have 
\begin{equation}  \label{imfest}
2|\operatorname{Im}f(te^{i\theta})|\le
t\int_{-\theta}^{\theta} |f'(te^{is})|\,ds,\quad t>0.
\end{equation}
Using (\ref{ts4}), we have
\begin{align*}
2J_\theta(r;f)\le &\int_0^\infty
\int_{-\theta}^{\theta} |f'(te^{is})|\,ds
\,\frac{dt}{(r+f(t))^2}\\
=&\int_{-\theta}^{\theta}\int_0^\infty
\,\frac{ |f'(te^{is})|}{(r+f(t))^2}\,dt \,ds\\
\le&\int_{-\theta}^{\theta}\int_0^\infty
\,\frac{|f'(te^{is})|}{(r+\cos s |f(te^{is})|)^2}\,dt \,ds\\
\le&\frac{1}{\cos^2\theta}\int_{-\theta}^{\theta}\int_0^\infty
\,\frac{|f'(te^{is})|}{r^2+ |f(te^{is})|^2}\,dt \,ds.  \\
=&\frac{1}{r \cos^2\theta} \int_{\Sigma_\theta} \,(S (f/r) )(z)\,\frac{dm(z)}{|z|}. \qedhere
\end{align*}
\end{proof}

Our second class of functions included in $\E$ involves pointwise conditions comparing the imaginary part of $f(z)$ in $\C_+$ with the derivative of $f(t)$ on $(0,\infty)$.  These conditions may be more intuitive than Definition \ref{defcong}.

\begin{defn} \label{defD*}
We introduce here four conditions on $f \in \NPp$, labelled as $\mathcal{D}_\varpi^\pm$, where $\varpi \in \{0,\infty\}$.   Each condition is to be read as saying that $f \in \mathcal{D}_\varpi^\pm$ if, for each $\theta \in (0,\pi/2)$, there exist $a \in (0,\infty]$, $b\in(0,\infty)$ and $c\in(0,\infty)$ (which may depend on $\theta$ and $f$) such that the specified properties hold:
\begin{itemize}
\item $f \in \mathcal{D}_0^+$ if $f$ is increasing on $(0,a/b)$ and 
\[
|\operatorname{Im}f(te^{i\theta})|\le c t f'(bt),\quad t\in (0,a).
\]
\item $f \in \mathcal{D}_0^-$ if $f$ is decreasing on $(0,a/b)$, and 
\[
|\operatorname{Im}f(te^{i\theta})|\le -c t f'(bt),\quad t\in (0,a).
\]
\item $f \in \mathcal{D}_\infty^+$ if $f$ is increasing on $(a/b,\infty)$, and 
\[
|\operatorname{Im}f(te^{i\theta})|\le c t f'(bt),\quad t\in (a,\infty).
\]
\item $f \in \mathcal{D}_\infty^-$ if $f$ is decreasing on $(a/b,\infty)$, and 
\[
|\operatorname{Im}f(te^{i\theta})|\le -c t f'(bt),\quad t\in (a,\infty).
\]
\end{itemize}
\end{defn}

\begin{example} \label{compsupp}
It is easy to see that the $\NPp$-functions $z(s^2+z^2)^{-1}$ belong to $\mathcal{D}_0^+ \cap \mathcal{D}_\infty^-$ for $s \in (0,\infty)$, and that the constants $a,b,c$ in Definition \ref{defD*} can be chosen uniformly for $s$ in any compact subset of $(0,\infty)$.  Hence if $\nu$ is a finite positive measure with compact support in $(0,\infty)$ and
\[
g(z) = z \int_0^\infty \frac{\nu(ds)}{s^2+z^2},
\]
then $g \in \mathcal{D}_0^+ \cap \mathcal{D}_\infty^-$.  In particular, for any sequence $(s_k)_{k\ge1}$ from a compact subset of $(0,\infty)$ and any sequence $(a_k)_{\ge1}$ with $a_k>0$ and $\sum_{k=1}^\infty a_k <\infty$, the function $g$ defined by
\[
g(z) = \sum_{k=1}^\infty a_k \frac{z}{z^2 + s_k^2},
\] 
belongs to $\mathcal{D}_0^+ \cap \mathcal{D}_\infty^-$.  This fact will be used in Example \ref{nonhp}.
\end{example}

Using Proposition \ref{pnpp}(3) it is easy to check that:
\[
f \in \mathcal{D}_{0}^+ \iff 1/f \in \mathcal{D}_{0}^- \iff f(1/z) \in \mathcal{D}_{\infty}^- \iff 1/f(1/z) \in \mathcal{D}_{\infty}^+.
\]

Let $f \in \mathcal{D}_0^+$.  A very simple calculation shows that $f(z^\alpha) \in \mathcal{D}_0^+$ when $\alpha \in (0,1)$.  For compositions of the form $g \circ f$, we have the following result.

\begin{prop} \label{Dcomp}
Let $f \in \mathcal{D}_0^+$, and let $g$ be a holomorphic function on $\mathbb C_+$ such that $g \circ f \in \mathcal{NP}_+$ and $g$ maps $(0,\infty)$ into $(0,\infty)$. 
\begin{enumerate}[\rm a)]
\item  \label{Dca} Assume that $f(0+)>0$, and $g'(f(0+))>0$.  Then $g \circ f \in \mathcal{D}_0^+$.
\item \label{Dcb} Assume that $f(0+)=0$ and there exists $r>0$ such that $g$ is strictly increasing on $(0,r)$, and, for each $\theta>0$, there exists $C_\theta>0$ such that
\begin{enumerate}[\rm(i)]
\item \label{Dci}$|g'(te^{i\psi})| \le C_\theta g'(t), \qquad  t \in (0,r), \; -\theta < \psi < \theta$;
\item \label{Dcii} $g'(t_1) \le C_\theta g'(t_2), \qquad t_1,t_2 \in (0,r), \; \frac12 \le t_1/t_2 \le 2$.
\end{enumerate}
Then $g \circ f \in \mathcal{D}_0^+$.
\end{enumerate}
\end{prop}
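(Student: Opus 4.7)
The plan is to control $|\operatorname{Im} h(z)|$ for $h := g \circ f$ via the identity $\operatorname{Im} h(z) = \operatorname{Im}[g(f(z)) - g(\operatorname{Re} f(z))]$, which holds because $g$ takes positive reals to positive reals. Parametrising the vertical segment from $\operatorname{Re} f(z)$ to $f(z)$ and integrating $g'$ yields the core estimate
\[
|\operatorname{Im} h(z)| \le |\operatorname{Im} f(z)| \sup_{s \in [0,1]} |g'(w(s))|, \qquad w(s) := \operatorname{Re} f(z) + is\, \operatorname{Im} f(z).
\]
Setting $z = te^{i\theta}$ for a fixed $\theta \in (0,\pi/2)$, the point $w(s)$ lies in the triangle with vertices $0$, $\operatorname{Re} f(z)$, $f(z)$, so in particular $|\arg w(s)| < \theta$. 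The two parts of the proposition now diverge according to how $|g'(w(s))|$ is controlled.

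For part (\ref{Dca}), Proposition \ref{pnpp}(\ref{pnpp4a}) gives $f(te^{i\theta}) \to f(0+) > 0$, so $w(s)$ is trapped in a neighbourhood of $f(0+)$ on which $g'$ is bounded by some $M$. Combined with the $\mathcal{D}_0^+$-estimate for $f$, this yields $|\operatorname{Im} h(te^{i\theta})| \le M c_f\, t\, f'(b_f t)$. Since $g'(f(b_f t)) \to g'(f(0+)) > 0$, we have $f'(b_f t) \le (2/g'(f(0+)))\, h'(b_f t)$ for $t$ small, and the required inequality follows with $b_h := b_f$. Monotonicity of $h$ on a small interval is automatic from positivity of $g'(f(t))$ and monotonicity of $f$ near $0$.

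For part (\ref{Dcb}), the task reduces to bounding $|g'(w(s))|$ uniformly as $w(s) \to 0$. Choosing $\theta' \in (\theta,\pi/2)$ and applying hypothesis (\ref{Dci}) gives $|g'(w(s))| \le C_{\theta'}\, g'(|w(s)|)$ once $|w(s)| < r$. The sandwich $f(t)\cos^2\theta \le |w(s)| \le f(t)/\cos\theta$, which follows from $\operatorname{Re} f(z) \ge |f(z)|\cos\theta$ and (\ref{ts4}), together with finitely many applications of (\ref{Dcii}), produces $g'(|w(s)|) \le C\, g'(f(t))$. A further bounded iteration of (\ref{Dcii}), justified by (\ref{ts3}) in the form $f(t)/f(b_f t) \le \max(b_f, 1/b_f)$, replaces $g'(f(t))$ by $g'(f(b_f t))$; feeding $f \in \mathcal{D}_0^+$ in at the end delivers $|\operatorname{Im} h(te^{i\theta})| \le c'\, t\, h'(b_f t)$. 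The main delicacy is bookkeeping: one must restrict $t$ small enough that $|w(s)|$, $f(t)$ and $f(b_f t)$ all remain in $(0,r)$, which is possible precisely because $f(0+) = 0$; monotonicity of $h$ on the resulting interval is then the composition of two increasing functions.
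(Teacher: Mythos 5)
Your argument is correct and follows essentially the same route as the paper's proof: both control $\operatorname{Im}(g\circ f)(z)$ by subtracting the (real) value of $g$ at a nearby positive real point, integrating $g'$ along a short path staying in a sector of half-angle $\theta$, invoking hypothesis (i) to replace $|g'|$ off the axis by $g'$ on the axis, and then iterating hypothesis (ii) finitely often to pass from $g'$ at a point comparable to $f(t)$ to $g'(f(bt))$, so that $f\in\mathcal{D}_0^+$ can be fed in at the end. The only difference is cosmetic: you integrate along the vertical segment from $\operatorname{Re}f(z)$ to $f(z)$ (which costs you the extra sandwich $f(t)\cos^2\theta\le|w(s)|\le f(t)/\cos\theta$), whereas the paper integrates along the circular arc from $|f(z)|$ to $f(z)$, keeping the modulus fixed at the price of the elementary bound $\rho|\psi|\le\frac{\pi}{2}\rho|\sin\psi|$.
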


\begin{proof}  We give the proof for the case \ref{Dcb}).  Note that assumption (b)(ii) can be iterated to give
\begin{equation} \label{gprime}
g'(t_1) \le C_\theta^{\log_2 \sigma + 1} g'(t_2), \qquad t_1,t_2 \in (0,r), \; 1/\sigma \le t_1/t_2 \le \sigma, \; \sigma>1.
\end{equation}
Let $\theta \in (0,\pi/2)$, and take $a,b,c$ as in Definition \ref{defD*} for $f \in \mathcal{D}_0^+$.  Let $f(te^{i\theta}) = \rho e^{i\psi}$, so $\rho$ and $\psi$ depend on $t$ and $|\psi| \le \theta$ by Proposition \ref{pnpp}(\ref{pnpp1}).   For $t>0$ sufficiently small, we have
\begin{equation} \label{imf}
\frac{2}{\pi} \rho |\psi| \le  \rho |\sin\psi| = |\operatorname{Im} f(te^{i\theta})| \le ct f'(bt),
\end{equation}
since $\sin s/s$ is decreasing on $[0,\pi/2]$.  By Proposition \ref{pnpp}(\ref
{pnpp3}), there is a constant $\sigma>1$, depending on $\theta$ and $b$, such that
\[
\frac{f(bt)}{\sigma} \le \rho = |f(te^{i\psi})| \le \sigma f(bt), \qquad t>0.
\]
For $t>0$ sufficiently small, using (\ref{imfest}), the assumption (b)(i), (\ref{imf}) and (\ref{gprime}) we have
\begin{align*}
\big|\operatorname{Im} g\big(f(te^{i\theta})\big)\big| &\le  \rho \int_0^\psi \left| g'(\rho e^{is}) \right| \, ds\\
&\le C_\theta \rho |\psi| g'(\rho)  \\
& \le C_\theta \frac{\pi}{2} ct f'(bt) C_\theta^{\log_2\sigma + 1} g'(f(bt)) \\
&= \frac{\pi cC_\theta^{\log_2\sigma+2}}{2} t (g \circ f)'(bt).
\end{align*}

In the case \ref{Dca}), the proof is simpler, using (\ref{imf}) and continuity of $g$ and $g'$ near $f(0+)$. 
\end{proof}

The assumptions of Proposition \ref{Dcomp}(\ref{Dcb}) are satisfied when $g(z) = z^\beta$ for $\beta>1$.  Hence if $f \in \mathcal{D}_0^+$, then the following are also in $\mathcal{D}_0^+$:
\begin{enumerate}[(i)]
\item  $f(z)^\beta$  if  $\beta>0$ and $f(z)^\beta \in \NPp$;
\item  $f(z)^\alpha$ and $f(z^\alpha)^{1/\alpha}$  when $\alpha \in (0,1)$.  
\end{enumerate}
Corresponding statements hold for $\mathcal{D}_0^-$, $\mathcal{D}_\infty^+$ and  $\mathcal{D}_\infty^-$.

In Definition \ref{defD*} the value of $b$ may depend on $f$, so the sum of two functions in any one of the four  classes may not be in the class.  We make the following definition which allows for sums.

\begin{defn} \label{defD}
For $\varpi \in \{0,\infty\}$, let $\mathcal{D}_\varpi$ be the intersection of all subsets $\mathcal{G}$ of $\NPp$ such that
\begin{enumerate}[\rm(i)]
\item $\mathcal{D}_\varpi^+ \subset \mathcal{G}$,
\item If $f \in \mathcal{G}$, then $1/f \in \mathcal{G}$,
\item If $f_1,f_2 \in \mathcal{G}$, then $f_1 + f_2 \in \mathcal{G}$.
\end{enumerate}
Let $\mathcal{D} = \mathcal{D}_0 \cap \mathcal{D}_\infty$.
\end{defn}

In other words, $\mathcal{D}_0$ contains those functions $f$ which can be obtained from a finite number of functions in $\mathcal{D}_0^+$ by taking a finite number of sums and reciprocals in some order.   In the definition, $\mathcal{D}_0^+$ could be replaced by $\mathcal{D}_0^-$.  Moreover, $\mathcal{D}_\infty = \{f \in \NPp: f(1/z) \in \mathcal{D}_0\}$.

\begin{prop}  \label{lemDE}
Let $\varpi \in \{0,\infty\}$.  Then $\mathcal{D}_\varpi \subset \E_\varpi$.  Hence $\mathcal{D} \subset \mathcal{E}$.
\end{prop}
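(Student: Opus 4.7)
The plan is to verify that $\mathcal{E}_0$ and $\mathcal{E}_\infty$ each meet the three closure conditions of Definition~\ref{defD}, which by minimality forces $\mathcal{D}_\varpi \subset \mathcal{E}_\varpi$ and hence $\mathcal{D} = \mathcal{D}_0 \cap \mathcal{D}_\infty \subset \mathcal{E}_0 \cap \mathcal{E}_\infty = \mathcal{E}$. Two of the three conditions are already in hand: closure under sums is recorded just after Definition~\ref{defcong}, and closure under reciprocals is built into the equivalences (i)--(iv) stated there. So the only real task is the base inclusion $\mathcal{D}_\varpi^+ \subset \mathcal{E}_\varpi$ for $\varpi \in \{0,\infty\}$.

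For $\mathcal{D}_0^+ \subset \mathcal{E}_0$ I would fix $f \in \mathcal{D}_0^+$ and $\theta \in (0,\pi/2)$ with associated constants $a, b, c$ from Definition~\ref{defD*}, observe that $f(0+) \in [0,\infty)$ exists by monotonicity, and work on the interval $(0, a_0)$ with $a_0 := \min(a, a/b^2)$, on which simultaneously the pointwise bound $|\operatorname{Im} f(te^{i\theta})| \le c t f'(bt)$ is available and the dilate $(0, a_0 b)$ sits inside $(0, a/b)$ where $f$ is monotone. The substitution $s = bt$ gives
\[
\int_0^{a_0} \frac{|\operatorname{Im} f(te^{i\theta})|}{(r+f(t))^2\, t}\,dt
\le \frac{c}{b} \int_0^{a_0 b} \frac{f'(s)}{(r + f(s/b))^2}\,ds.
\]
Next I would invoke the bound (\ref{ts3}) of Proposition~\ref{pnpp}(\ref{pnpp3}), applied to the arguments $s/b$ and $s$, to get $f(s/b) \ge \rho f(s)$ with $\rho := \min(b, 1/b) \le 1$, and hence $r + f(s/b) \ge \rho(r+f(s))$. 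A final substitution $u = f(s)$, legitimate by the monotonicity of $f$ on $(0,a_0 b)$, collapses the remaining integral to $\frac{c}{b\rho^2}\int_{f(0+)}^{f(a_0 b)} (r+u)^{-2}\,du \le \frac{c}{b\rho^2 r}$. The leftover portion $[a_0, a]$ is handled by the crude bound $(r+f(t))^{-2} \le (r\,\inf_{[a_0,a]} f)^{-1}$, producing the $C_{a,\theta}/r$ estimate required by the definition of $\mathcal{E}_0$.

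For the dual inclusion $\mathcal{D}_\infty^+ \subset \mathcal{E}_\infty$ I would avoid repeating the work by combining the equivalence $f \in \mathcal{D}_\infty^+ \iff 1/f(1/z) \in \mathcal{D}_0^+$ listed after Definition~\ref{defD*} with the equivalence $g \in \mathcal{E}_0 \iff 1/g(1/z) \in \mathcal{E}_\infty$ listed after Definition~\ref{defcong}: together these transport the already-established $\mathcal{D}_0^+ \subset \mathcal{E}_0$ into $\mathcal{D}_\infty^+ \subset \mathcal{E}_\infty$. Once both base inclusions are in place, minimality in Definition~\ref{defD} yields $\mathcal{D}_\varpi \subset \mathcal{E}_\varpi$ for $\varpi \in \{0,\infty\}$, and intersecting closes the argument.

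The main difficulty, as I see it, is purely administrative: synchronising the interval on which the three ingredients (the pointwise $\mathcal{D}_0^+$ inequality, monotonicity of $f$, and the dilation $s = bt$) are simultaneously valid. The cut-off $a_0 = \min(a, a/b^2)$ is what makes them compatible, and the function-theoretic input beyond the real-variable comparison (\ref{ts3}) is very light; the rest is bookkeeping with the changes of variable.
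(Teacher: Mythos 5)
Your proposal is correct and follows essentially the same route as the paper: the base inclusion $\mathcal{D}_0^+\subset\mathcal{E}_0$ via the substitution $s=bt$ together with the real-variable comparison (\ref{ts3}), and then closure of $\mathcal{E}_\varpi$ under sums and reciprocals combined with the minimality clause of Definition~\ref{defD}. The only (harmless) divergences are that the paper treats $\varpi=\infty$ by repeating the computation rather than by your transport through $1/f(1/z)$, and that your extra handling of the interval $[a_0,a]$ is not needed because of the ``some (or equivalently, all) $a$'' clause in Definition~\ref{defcong}.
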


\begin{proof}
Let $f \in \mathcal{D}_0^+$, $a,b,c$ be as in Definition \ref{defD*}, $a' = a \min(1,1/b)$ and $\rho = \max(b,1/b)$.  Let $\theta \in (0,\pi/2)$ and $r>0$.  Since $f$ is increasing on $(0,a)$ and (\ref{ts3}) holds, we have
\[
f(bt) \le \max(b,1)f(t), \qquad t \in (0,a').
\]
Then
\begin{equation} \label{Ctheta}
\int_0^{a'} \frac{|\operatorname{Im}f(te^{i\theta})|}{(r+f(t))^2 t} \,dt
\le c \max(1,b^2) \int_0^a \frac{f'(bt)}{(r+f(bt))^2} \, dt \le \frac{c \max(1,b^2)}{ b(r + f(0+))} \le \frac{c\rho}{ r}.
\end{equation}
So $f\in \E_0$.  Since $\E_0$ is closed under reciprocals and sums, it follows from Definition \ref{defD} that $\mathcal{D}_0 \subset \E_0$.

A similar argument applies for $\varpi = \infty$.  Then
\[
\mathcal{D} = \mathcal{D}_0 \cap \mathcal{D}_\infty \subset \E_0 \cap \E_\infty = \E. \qedhere
\]
\end{proof}

We shall give examples of some broad classes of functions in $\mathcal{D}$ in Section \ref{BF}.   Here we mention one other class of examples which slightly extends \cite[Lemma 3.7, Appendix A]{GT2}.  Let
\[
h_n(z):=\left(\frac{1-z}{1+z}\right)^n,\quad n\in \mathbb N, \; z \in \C_+.
\]

\begin{prop}
If
\begin{equation}\label{L1}
h(z)=1-\sum_{n=0}^\infty c_n h_n(z),\quad
c_n \in \mathbb R, \quad \sum_{n=0}^\infty |c_n|\le 1,
\end{equation}
then $h\in \mathcal{D}$.
\end{prop}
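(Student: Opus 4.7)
The plan is to verify $h\in\NPp$ first, then establish $h\in\mathcal{D}_0\cap\mathcal{D}_\infty$ via termwise estimates with constants uniform in $n$.

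To check $h\in\NPp$: the Cayley transform $w(z):=(1-z)/(1+z)$ maps $\C_+$ into the closed unit disc, hence $|h_n(z)|\le 1$; combined with $\sum_n|c_n|\le 1$ this gives $|h(z)-1|\le 1$, placing $h(\C_+)\subset\overline{D(1,1)}\subset\overline{\C_+}$. The open mapping theorem then forces $h(\C_+)\subset\C_+$ unless $h$ is constant (a degenerate case to be handled separately), and reality of the $c_n$ yields $h((0,\infty))\subset(0,\infty)$ by the same bound.

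For $h\in\mathcal{D}_0$, the key computation is based on $h(z)=1-g(w(z))$ with $g(w):=\sum_n c_n w^n$ a function in the Wiener algebra of norm at most $1$. The elementary bound $|\operatorname{Im}(w^n)|\le n|w|^{n-1}|\operatorname{Im}w|$ together with $|\operatorname{Im}w(te^{i\theta})|=2t\sin\theta/|1+te^{i\theta}|^2$ gives
\[
|\operatorname{Im}h(te^{i\theta})|\le \frac{2t\sin\theta}{|1+te^{i\theta}|^2}\sum_n n|c_n||w(te^{i\theta})|^{n-1}.
\]
Choosing $b=b(\theta)<\cos\theta$, one has $|w(te^{i\theta})|\le w(bt)$ for small $t$ (by the sector estimates of Proposition \ref{pnpp}(\ref{pnpp3})), so the sum is dominated by $\sum_n n|c_n|w(bt)^{n-1}$. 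Comparing this with $h'(t)=(2/(1+t)^2)\sum_n nc_nw(t)^{n-1}$ would yield the $\mathcal{D}_0^{\pm}$-inequality $|\operatorname{Im}h(te^{i\theta})|\le C_\theta t|h'(bt)|$ provided $h'$ has consistent sign on a neighbourhood of $0$.

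The main obstacle I foresee is establishing this eventual monotonicity of $h$ near $0$ (and symmetrically near $\infty$): equivalently, $g'(w)=\sum_n nc_n w^{n-1}$ must have constant sign on $(1-\epsilon,1)$ for some $\epsilon>0$. I would handle this using Wiener-algebra properties of $g$: continuity of boundary values at $w=1$ and the observation that the zeros of the holomorphic function $g'$ inside $\D$ cannot accumulate at the boundary point $w=1$ from within $(-1,1)$ unless $g'\equiv 0$; an Abel-summation argument controls the relevant one-sided radial behaviour. In the remaining degenerate situation where monotonicity fails precisely at $0$, one falls back on the closure of $\mathcal{D}_0$ under finite sums and reciprocals, splitting $g$ into a small finite combination of monotone atoms (each associated with a single $1-h_n$, which is increasing on $(0,1)$ uniformly in $n$) and absorbing the tail via the uniform $n$-estimate above.

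Finally, $h\in\mathcal{D}_\infty$ reduces to the $\mathcal{D}_0$-case via the identity $h(1/z)=1-\sum_n(-1)^n c_n h_n(z)$, which follows from $w(1/z)=-w(z)$. The new series has coefficients $(-1)^nc_n$ with the same Wiener-norm bound $\sum_n|(-1)^n c_n|\le 1$, so the preceding argument shows $h(1/z)\in\mathcal{D}_0$; the characterisation $\mathcal{D}_\infty=\{f\in\NPp:f(1/z)\in\mathcal{D}_0\}$ noted after Definition \ref{defD} then gives $h\in\mathcal{D}_\infty$.
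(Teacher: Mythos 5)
Your verification that $h\in\NPp$ and your reduction of $\mathcal{D}_\infty$ to $\mathcal{D}_0$ via $h(1/z)=1-\sum_n(-1)^nc_nh_n(z)$ are exactly what the paper does, and your termwise bound $|\operatorname{Im}(w^n)|\le n|w|^{n-1}|\operatorname{Im}w|$ combined with $|w(te^{i\theta})|\le w(bt)$ for small $t$ is in substance the uniform-in-$n$ estimate that the paper imports from \cite[Lemma 9.4]{GT2}. The gap is in the middle step, where you try to place $h$ itself in $\mathcal{D}_0^{\pm}$. Two things go wrong. First, $h$ need not be monotone on any interval $(0,\varepsilon)$, and your route to eventual monotonicity rests on the claim that the zeros of $g'$ cannot accumulate at the boundary point $w=1$ unless $g'\equiv 0$; but zeros of a nonconstant holomorphic function on $\D$ can perfectly well accumulate at boundary points (only interior accumulation is excluded), so this principle is false and the Abel-summation argument has nothing to stand on. Second, and independently of monotonicity, your upper bound for $|\operatorname{Im}h(te^{i\theta})|$ involves $\sum_n n|c_n|\,|w(te^{i\theta})|^{n-1}$, with absolute values on the coefficients, whereas $h'(bt)$ is a constant times the \emph{signed} sum $\sum_n nc_n w(bt)^{n-1}$. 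When the $c_n$ have mixed signs there is no domination of the absolute sum by a multiple of the signed one (the signed sum can vanish or tend to $0$ as $w(bt)\to 1^-$ while the absolute sum stays bounded below), so the inequality $|\operatorname{Im}h(te^{i\theta})|\le C_\theta t|h'(bt)|$ cannot be extracted from your estimate.

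The repair is not to force $h$ into $\mathcal{D}_0^{\pm}$ but to exploit the closure properties built into Definition \ref{defD}, which exist precisely for this situation. Set $a=1-\sum_n c_n\ge 0$ and split by the sign of the coefficients: $g_1=a+\sum_{c_n\ge 0}c_n(1-h_n)$ and $g_2=\sum_{c_n<0}c_n(1-h_n)$, so $h=g_1+g_2$. Each $1-h_n$ is increasing on $(0,1)$, so $g_1$ is increasing and $g_2$ is decreasing there, and within each sum the coefficients have a single sign, so your termwise estimate applies without the loss of signs and yields $|\operatorname{Im}g_1(te^{i\theta})|\le C_\theta t\,g_1'(bt)$ and $|\operatorname{Im}g_2(te^{i\theta})|\le -C_\theta t\,g_2'(bt)$ with $b$ and $C_\theta$ independent of the coefficients. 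Hence $g_1\in\mathcal{D}_0^{+}$, $g_2\in\mathcal{D}_0^{-}$, and $h=g_1+g_2\in\mathcal{D}_0$ because $\mathcal{D}_0$ is closed under sums. Your final paragraph, applied to the series with coefficients $(-1)^nc_n$, then gives $h\in\mathcal{D}_\infty$ unchanged, and the proof is complete.
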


\begin{proof}
From
\[
|h_n(z)|\le 1,\quad z\in  \mathbb C_{+}, \quad h_n: (0,\infty)\to \mathbb R,
\]
it follows that $h\in \mathcal{NP}_{+}$.  Moreover $h_n$ is decreasing on $(0,1)$.

By \cite[Lemma 9.4]{GT2},
 for all $\theta\in (0,\pi/2)$ there exists
$b\in(0,1)$ such that for every $n\in \mathbb N$
\[
|\operatorname{Im} h_n(te^{i\theta})|\le -\frac{\pi}{2}t h_n'(bt),\quad t\in (0,1).
\]
Let 
\[
g_1(z)=
a+\sum_{c_n\ge 0}c_n(1-h_n(z)),\quad
g_2(z)=\sum_{c_n<0}c_n(1-h_n(z)), 
\quad a=1-\sum_{n=0}^\infty c_n.
\]
Then
\[
h=g_1+g_2,\quad g_1\in \mathcal{D}_0^{+},\quad g_2\in \mathcal{D}_0^{-}.
\]
So $h \in \mathcal{D}_0$.  This applies to any $h$ of the form (\ref{L1}).

Since
\[
\tilde{h}(z):=h(1/z)=
1-\sum_{n=0}^\infty (-1)^nc_n h_n(z)
\]
is also of the form (\ref{L1}), we have
\[
\tilde{h}\in \mathcal{D}_0.
\]
Then
\[
h(z)=\tilde{h}(1/z)\in \mathcal{D}_\infty. \qedhere
\]
\end{proof}

\section{Function classes contained in $\mathcal{D}$}  \label{BF}

We start by recalling  definitions of some established classes of functions.

\begin{defn}
A smooth function $g: (0,\infty) \to (0,\infty)$ is \emph{completely monotone} if $(-1)^n g^{(n)}(t) \ge 0$ for all $t>0$ and $n\in\N$.
\end{defn}

By Bernstein's Theorem \cite[Theorem 1.4]{SchilSonVon2010} $g$ is completely monotone if and only if there exists a positive Laplace transformable measure $\nu$ on $[0,\infty)$ such that
\begin{equation} \label{defCM}
g(t) = \int_0^\infty e^{-t s} \, \nu(ds), \qquad t>0.
\end{equation}
Then $g$ may be extended to a holomorphic function from $\C_+$ to $\C$, and we shall identify $g$ with this extension.  We have
\begin{equation} \label{CMinq}
|g(z)| \le \int_0^\infty e^{-\operatorname{Re} z s} \, \nu(ds) = g(\operatorname{Re}z), \qquad z \in \C_+.
\end{equation}
We shall denote the set of all completely monotone functions by $\mathcal{CM}$.

\begin{defn}
A smooth function $f : (0,\infty)\mapsto (0,\infty)$ is called a \emph{Bernstein function}
if its derivative $f'$ is completely monotone.
\end{defn}
By \cite[Theorem 3.2]{SchilSonVon2010},
$f$ is Bernstein if and only if there exist  $a, b\geq 0$
and a positive measure $\mu$ on $(0,\infty)$ satisfying
\begin{equation*}
\int_{0+}^\infty\frac{s}{1+s}\,\mu(d{s})<\infty \label{mu}
\end{equation*}
such that
\begin{equation}\label{defBF}
f(t)=a+bt+\int_{0+}^\infty (1-e^{-ts})\mu(d{s}), \qquad t>0.
\end{equation}
The formula \eqref{defBF} is called the \emph{L\'evy-Khintchine representation} of $f$.  The triple $(a,b,\mu)$ is defined uniquely and is called the \emph{L\'evy triple} of $\psi$. 
Standard examples of Bernstein functions include $z^\alpha$ for  $\alpha \in [0,1]$,  $\log(1+z)$ and $1 - e^{-z}$.

Every Bernstein function extends to a holomorphic function from $\mathbb C_+$ to $\C_+$ given by the formula (\ref{defBF}), and we will identify Bernstein functions with their holomorphic extensions to $\C_+$.  The set of all Bernstein functions will be denoted by $\mathcal{BF}$ and it will be identified with a subset of $\NPp$.    

In the next proposition we summarise some known properties of Bernstein functions similar to those in Proposition \ref{pnpp}.  For these and other properties of Bernstein functions, see \cite[Chapter 3]{SchilSonVon2010} or \cite[Section 3.9]{Jacob}.

\begin{prop} \label{pbf}
Let $f \in \mathcal{BF}$.
\begin{enumerate}[\rm1.]
\item \label{pbf1} $f$ maps $\Sigma_\theta$ into itself for all $\theta \in (0,\pi/2)$.
\item \label{pbf2}  If $\alpha \in (0,1/2]$ and $h_\alpha(z) = f(z^\alpha)^{1/\alpha}$ for $z \in \C_+$, then $h_\alpha \in \mathcal{BF}$.
\item \label{pbf5}  If $g \in \mathcal{CM}$, then $g \circ f \in \mathcal{CM}$.  In particular, $1/f \in \mathcal{CM} \cap \NPp$.
\item \label{pbf6}  If $h \in \mathcal{BF}$, then $h \circ f \in \mathcal{BF}$.
\item \label{pbf3} For all  $\theta\in(-\pi/2,\pi/2)$ and $t>0$,
\begin{equation*}\label{R09}
f(t) \cos\theta \le f(t\cos\theta) \le |f(te^{i\theta})| \le  \kappa_\theta f(t) \le \frac{\kappa_\theta}{\cos\theta} f(t\cos\theta), 
\end{equation*}
where
\[
\kappa_\theta = \min \left(\frac{2e}{e-1}, \frac{1}{\cos\theta}\right) .
\]
\item \label{pbf4} For all  $\theta\in (0,\pi/2)$,
\begin{equation}\label{in20}
\lim_{z \in \Sigma_\theta,|z|\to\infty} f(z) = f(\infty).
\end{equation}
\end{enumerate}
\end{prop}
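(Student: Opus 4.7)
The plan is to derive each assertion from the L\'evy--Khintchine representation (\ref{defBF}) and standard closure properties of the Bernstein and completely monotone calculus, most of which can be found in \cite[Chapter 3]{SchilSonVon2010}.

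First, (\ref{pbf1}) and (\ref{pbf4}) are immediate: every Bernstein function lies in $\NPp$ (visible from (\ref{defBF})), so (\ref{pbf1}) follows from Proposition~\ref{pnpp}(\ref{pnpp1}); and $f$ is monotone increasing on $(0,\infty)$ because $f'\in\mathcal{CM}$ is non-negative, so $f(\infty)$ exists in $[0,\infty]$ and Proposition~\ref{pnpp}(\ref{pnpp4b}) yields (\ref{pbf4}).

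For (\ref{pbf5}), I would use Bernstein's theorem to write $g(t)=\int_{0}^{\infty}e^{-ts}\,\nu(ds)$. For each fixed $s>0$, $sf$ is Bernstein, so $e^{-sf(\cdot)}$ is completely monotone (this is the characterisation of Bernstein functions via their convolution semigroup, tantamount to (\ref{defBF})). Integrating against the positive measure $\nu$ exhibits $g\circ f$ as a superposition of $\mathcal{CM}$-functions, hence in $\mathcal{CM}$; taking $g(z)=1/z\in\mathcal{CM}$ gives the ``in particular'' statement. Item (\ref{pbf6}) is then a one-line consequence: $(h\circ f)'=(h'\circ f)\,f'$ is a product of two completely monotone functions (apply (\ref{pbf5}) with $g=h'$), and $\mathcal{CM}$ is closed under products.

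For (\ref{pbf2}), since $z\mapsto z^{\alpha}$ is Bernstein for $\alpha\in(0,1)$, (\ref{pbf6}) shows that $f(z^{\alpha})\in\mathcal{BF}$. Extracting the $1/\alpha$-th power when $1/\alpha\ge2$ is the delicate step: it requires a subordination argument combining the convolution semigroup associated with $f$ with the $\alpha$-stable subordinator (whose Laplace exponent is $z^{\alpha}$), and the restriction $\alpha\le 1/2$ is precisely what makes these two subordinators compose into a semigroup whose Laplace exponent is $h_\alpha$. I would cite \cite[Chapter 3]{SchilSonVon2010} for the details; this is the main obstacle of the proposition.

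Finally, for (\ref{pbf3}), the lower bound $f(t)\cos\theta\le f(t\cos\theta)$ comes from concavity of $f$ on $(0,\infty)$ (since $f'\in\mathcal{CM}$ is non-increasing) together with $f(0+)\ge0$, via $f(\lambda t)\ge\lambda f(t)+(1-\lambda)f(0+)\ge\lambda f(t)$ for $\lambda=\cos\theta$. The inequality $f(t\cos\theta)\le|f(te^{i\theta})|$ reduces to $f(\operatorname{Re}z)\le\operatorname{Re}f(z)$, which is read off (\ref{defBF}):
\[
\operatorname{Re}f(te^{i\theta})-f(t\cos\theta)=\int_{0}^{\infty}e^{-ts\cos\theta}\bigl(1-\cos(ts\sin\theta)\bigr)\,\mu(ds)\ge0.
\]
The upper bound $|f(te^{i\theta})|\le\kappa_\theta f(t)$ is obtained termwise in (\ref{defBF}) from the classical estimate $|1-e^{-w}|\le\kappa_\theta(1-e^{-\operatorname{Re}w})$ for $|\arg w|\le\theta$; the two candidate constants $2e/(e-1)$ and $1/\cos\theta$ arise by splitting the cases $|w|\ge1$ (where $|1-e^{-w}|\le2$ and $1-e^{-\operatorname{Re}w}\ge1-e^{-\cos\theta}$) and $|w|\le1$ (where $|1-e^{-w}|\le|w|$ and $1-e^{-\operatorname{Re}w}\ge e^{-1}\operatorname{Re}w$). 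The last inequality in (\ref{R09}) is just a rewriting of the first.
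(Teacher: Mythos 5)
Most of your argument is sound, and where the paper simply cites references (for parts (\ref{pbf2}), (\ref{pbf5}), (\ref{pbf6}) it points to \cite[Proposition A.2]{GT} and \cite[Theorem 3.7, Corollary 3.8]{SchilSonVon2010}) you supply the standard underlying computations. Your concavity argument for the first inequality in (\ref{pbf3}) and the identity $\operatorname{Re}f(te^{i\theta})-f(t\cos\theta)=\int_{0+}^\infty e^{-ts\cos\theta}\bigl(1-\cos(ts\sin\theta)\bigr)\,\mu(ds)\ge0$ for the second are correct alternatives to the paper's use of (\ref{ts3}) and of (\ref{CMinq}) applied to $1/f$. For part (\ref{pbf2}) you only gesture at a subordination heuristic and defer to the literature, as does the paper; note, however, that the result for $\alpha\in(0,1/2]$ is not available in \cite[Chapter 3]{SchilSonVon2010} in the form needed, and the appropriate citation is \cite[Proposition A.2]{GT}.

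The genuine gap is in the third inequality of (\ref{pbf3}). The termwise estimate $|1-e^{-w}|\le\kappa_\theta(1-e^{-\operatorname{Re}w})$ for $|\arg w|\le\theta$ with $\kappa_\theta=\min(2e/(e-1),1/\cos\theta)$ is false, and no termwise estimate can produce a constant that stays bounded as $\theta\to\pi/2$: take $w=\varepsilon+i\pi$ with $\varepsilon$ small (so $|\arg w|<\theta$ once $\theta$ is close enough to $\pi/2$); then $|1-e^{-w}|=1+e^{-\varepsilon}\to 2$ while $1-e^{-\operatorname{Re}w}=1-e^{-\varepsilon}\to 0$, so the ratio tends to infinity. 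Your own case split does not yield the claimed constants either: for $|w|\ge1$ you get $2/(1-e^{-\cos\theta})$, which exceeds $2e/(e-1)$ for every $\theta\ne0$ and blows up as $\theta\to\pi/2$, and for $|w|\le1$ you get $e/\cos\theta$ rather than $1/\cos\theta$. What a termwise argument does give is $|1-e^{-w}|=\bigl|\int_0^1 we^{-tw}\,dt\bigr|\le|w|\,(1-e^{-\operatorname{Re}w})/\operatorname{Re}w=(1-e^{-\operatorname{Re}w})/\cos(\arg w)$, which recovers the $1/\cos\theta$ alternative in $\kappa_\theta$ (this is also just (\ref{ts4}) for general $\NPp$-functions). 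The uniform bound $|f(te^{i\theta})|\le\frac{2e}{e-1}\,f(t)$ is a genuinely different, non-termwise statement; it is precisely \cite[Proposition 2.3]{GT3}, which the paper invokes at this point, and your proof does not establish it.
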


\begin{proof}
(\ref{pbf1}) and (\ref{pbf4}) are immediate from the corresponding parts of Proposition \ref{pnpp}, (\ref{pbf2}), and (\ref{pbf5}) and (\ref{pbf6}) are  shown in \cite[Proposition A.2]{GT},  \cite[Theorem 3.7]{SchilSonVon2010}, and \cite[Corollary 3.8]{SchilSonVon2010}, respectively.   For (\ref{pbf3}), the first and fourth inequalities follow from (\ref{ts3}), and the third inequality is a combination of (\ref{ts4}) and \cite[Proposition 2.3]{GT3}.  The second inequality follows from (\ref{CMinq}) applied to $1/f$.
\end{proof}

It is an open question whether Proposition \ref{pbf}(\ref{pbf2}) holds for $\alpha \in (1/2,1)$.  If $f : (0,\infty) \to (0,\infty)$ and $g \circ f \in \mathcal{CM}$ for all $g \in \mathcal{CM}$ then $f \in \mathcal{BF}$ \cite[Theorem 3.7]{SchilSonVon2010}.  The following gives examples where $g \in \mathcal{CM} \cap \NPp$ but $1/g \notin \mathcal{BF}$.

\begin{example} \label{exnew}
If $t\ge0$ and
\[
g_t(z) = \frac{z+2t}{(z+t)^2} = \frac{1}{z+t} + \frac{t}{(z+t)^2},
\]
then $g_t \in \mathcal{CM} \cap \NPp $.  However, for $t>0$, 
\[
\frac{1}{g_t(z)} = z + \frac {t^2}{z+2t} \notin \mathcal{BF},
\]
because its second derivative is positive.  If $\mu$ is any finite positive measure on $[0,\infty)$ and 
\[
\int_0^\infty \frac{\mu(dt)}{1+t} < \infty,   \quad G(z) = \int_0^\infty g_t(z) \, \mu(dt),
\]
then $G \in  \mathcal{CM} \cap \NPp$.
\end{example}

In the following results we will show that $\mathcal{D}$ contains all functions in the following classes:
\begin{enumerate}[\rm (D1)] \label{dcl}
\item \label{dcl1} $\mathcal{CM} \cap \NPp$ (Proposition \ref{cong}\ref{conga}),
\item $\mathcal{BF}$ (Proposition \ref{cong}\ref{congb}),
\item Functions in $\NPp$ which are products of finitely many functions in $\mathcal{BF}$ (Corollary \ref{Pp1}),
\item \label{dcl4} Functions of the form $f_1(z^{\alpha_1})^{\beta_1} \dots f_n(z^{\alpha_n})^{\beta_n}$, where $f_j \in \mathcal{BF}$, $0 < \alpha_j \le 1$, $0 < \beta_j \le 1$ and $\sum_{j=1}^n \alpha_j \beta_j \le 1$ (Corollary \ref{Pp2}).
\end{enumerate} 
See Examples \ref{nonbf} and \ref{pans} for some illustrations of  the class (D\ref{dcl4}).

\begin{prop} \label{cong}
\begin{enumerate} [\rm a)] 
\item \label{conga} Let $g \in \mathcal{CM} \cap \NPp$.  Then
\[
|\operatorname{Im}g(te^{i\theta})| \le -|\sin\theta| \, t g'(t\cos\theta), \qquad t>0, \; \theta \in (-\pi/2,\pi/2).
\]
Hence $g \in \mathcal{D}$, and in particular $g \in \E$.

\item \label{congb} Let $f \in \mathcal{BF}$.  Then $f \in \mathcal{S} \cap \mathcal{D}$.  In particular,
\begin{equation} \label{BFD}
|\operatorname{Im} f(te^{i\theta})| \le 
|\sin\theta|\,t f'(t\cos\theta), \qquad t>0, \; \theta \in (-\pi/2,\pi/2),
\end{equation}
and $f \in \E$.
\end{enumerate}
\end{prop}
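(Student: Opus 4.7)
The plan is to derive both pointwise inequalities by direct computation from the respective integral representations, transfer them into $\mathcal{D}$-membership via Definition \ref{defD*}, and then handle the separate $\mathcal{S}$ claim in part (b) by a polar-coordinate estimate of the spherical-derivative integral.

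For part (a), starting from the Laplace representation $g(z) = \int_0^\infty e^{-zs}\,\nu(ds)$ guaranteed by Bernstein's theorem and taking imaginary parts gives
\[
\operatorname{Im} g(te^{i\theta}) = -\int_0^\infty e^{-ts\cos\theta}\sin(ts\sin\theta)\,\nu(ds).
\]
Applying $|\sin x| \le |x|$ and recognising $\int_0^\infty s e^{-ts\cos\theta}\,\nu(ds) = -g'(t\cos\theta)$ yields the claimed bound. Since $g$ is completely monotone, it is decreasing on all of $(0,\infty)$, so Definition \ref{defD*} is met for both $\mathcal{D}_0^-$ and $\mathcal{D}_\infty^-$ with $c = |\sin\theta|$, $b = \cos\theta$ and any $a\in(0,\infty]$. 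Hence $g\in\mathcal{D}$, and Proposition \ref{lemDE} gives $g\in\E$.

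Part (b) runs in parallel: the L\'evy--Khintchine representation $f(z) = a + bz + \int_{0+}^\infty(1-e^{-zs})\,\mu(ds)$ has imaginary part equal to $bt\sin\theta + \int_{0+}^\infty e^{-ts\cos\theta}\sin(ts\sin\theta)\,\mu(ds)$ at $z = te^{i\theta}$, and the same use of $|\sin x|\le|x|$ produces (\ref{BFD}). Since any Bernstein function is increasing on $(0,\infty)$, this verifies $f\in\mathcal{D}_0^+\cap\mathcal{D}_\infty^+\subset\mathcal{D}$, and therefore $f\in\E$ again by Proposition \ref{lemDE}. For the remaining assertion $f\in\mathcal{S}$, I would rewrite the spherical-derivative integral in polar coordinates as
\[
\int_{\Sigma_\theta}(S(f/r))(z)\,\frac{dm(z)}{|z|} = \int_{-\theta}^\theta\int_0^\infty \frac{r\,|f'(\rho e^{is})|}{r^2 + |f(\rho e^{is})|^2}\,d\rho\,ds,
\]
and control the integrand via two Bernstein-specific pointwise bounds: $f'\in\mathcal{CM}$ together with (\ref{CMinq}) gives $|f'(\rho e^{is})| \le f'(\rho\cos s)$, and Proposition \ref{pbf}(\ref{pbf3}) gives $|f(\rho e^{is})| \ge f(\rho\cos s)$. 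The substitution $u = \rho\cos s$ then introduces a factor $1/\cos s$ and reduces the inner integrand to the $s$-independent expression $rf'(u)(r^2 + f(u)^2)^{-1}$; a further substitution $v = f(u)$ evaluates this to $\arctan(f(\infty)/r) - \arctan(f(0+)/r) \le \pi/2$, uniformly in $r$. The outer $\int_{-\theta}^\theta ds/\cos s$ is finite, so the overall bound is a constant $C_\theta$ independent of $r$, proving $f\in\mathcal{S}$.

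The main obstacle I anticipate is arranging the two pointwise bounds so that both involve the common expression $\rho\cos s$ rather than, say, $\rho\cos\theta$; this is exactly what Proposition \ref{pbf}(\ref{pbf3}) and the complete monotonicity of $f'$ together provide, and once this alignment is recognised the $\mathcal{S}$-calculation is routine.
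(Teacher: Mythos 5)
Your proof is correct and takes essentially the same approach as the paper. The only notable (and benign) reorganisation is in part (b): the paper establishes $f \in \mathcal{D}$ by observing $1/f \in \mathcal{CM} \cap \NPp \subset \mathcal{D}$ (part (a) plus closure of $\mathcal{D}$ under reciprocals), deriving (\ref{BFD}) afterwards as a separate computation, whereas you derive (\ref{BFD}) first and use it directly to place $f$ in $\mathcal{D}_0^+ \cap \mathcal{D}_\infty^+ \subset \mathcal{D}$; both routes rest on the identical representation-plus-$|\sin x|\le|x|$ estimate, and your check that $\mathcal{D}_0^+ \cap \mathcal{D}_\infty^+ \subset \mathcal{D}$ is valid since $\mathcal{D}_\infty^+ \subset \mathcal{D}_\infty$ follows from the paper's equivalences and closure of $\mathcal{D}_0$ under reciprocals. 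For the $\mathcal{S}$-membership your polar-coordinate computation (with the substitutions $u = \rho\cos s$, $v = f(u)$) is the same estimate the paper runs, just written out more explicitly; the two pointwise ingredients you cite, namely $|f'(\rho e^{is})| \le f'(\rho\cos s)$ from (\ref{CMinq}) and $|f(\rho e^{is})| \ge f(\rho\cos s)$ from Proposition \ref{pbf}(\ref{pbf3}), are exactly the paper's.
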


\begin{proof}
Let $g\in \mathcal{CM}$.  From (\ref{defCM}), we have
\[
g'(t)=-\int_{0+}^\infty se^{-ts}\,\nu(ds)<0,\quad t>0,
\]
and for $\theta\in (-\pi/2,\pi/2)$ and $t>0$ it follows that
\begin{align*}
|\operatorname{Im} g(te^{i\theta})| &=\left|\int_{0+}^\infty \operatorname{Im}\,e^{-ste^{i\theta}}\,\nu(ds)\right| 
\le \int_{0+}^\infty e^{-st\cos\theta}|\sin(ts\sin\theta)|\,\nu(ds)  \\
&\le |\sin\theta| t \int_{0+}^\infty s e^{-st\cos\theta}\,\nu(ds)
\le -|\sin\theta| t g'(t\cos\theta).
\end{align*}
So, if we suppose additionally that $g\in \mathcal{NP}_{+}$ then 
\[
g\in \mathcal{D}_0^{-} \cap \mathcal{D}_\infty^- \subset \mathcal{D}.
\]
By Proposition \ref{lemDE}, $g\in\E$.

Let $f \in \mathcal{BF}$.  Then $1/f \in \mathcal{CM} \cap \NPp \subset \mathcal{D}$, so $f \in \mathcal{D} \subset \E$.  Moreover, $f' \in \mathcal{CM}$, so $|f'(te^{i\theta})| \le f'(t\cos\theta)$, by (\ref{CMinq}).  Using also Proposition \ref{pbf}(\ref{pbf3}), we have
\begin{align*}
\int_0^\infty \frac{|f'(te^{i\theta})|} {r^2 + |f(te^{i\theta})|^2} \, dt
&\le \int_0^\infty \frac{f'(t\cos\theta)} {r^2 + f(t\cos\theta)^2} \, dt \\
&\le \frac{1}{\cos\theta} \int_{0}^{\infty} \frac{d\tau}{r^2+\tau^2} 
= \frac{\pi}{2 r \cos \theta}. 
\end{align*}
By Proposition \ref{sphere}, $f \in \mathcal{S} \subset \mathcal{E}$.

By (\ref{defBF}) we have
\[
f'(z)=b+\int_{0+}^\infty s e^{-sz}\,\mu(ds),\quad z\in \C_{+},
\]
and
\[
\operatorname{Im}f(te^{i\theta})
=bt\sin\theta+\int_{0+}^\infty  e^{-st\cos\theta}
\sin(ts\sin\theta)\,\mu(ds).
\]
Hence, using 
$|\sin \tau|\le |\tau|$,  we obtain that
\begin{align*}
|\operatorname{Im}f(te^{i\theta})| &\le bt|\sin\theta|+
t|\sin \theta|\int_{0+}^\infty  se^{-st\cos\theta}
\,\mu(ds)
\\
&=|\sin\theta|\,t f'(t\cos\theta). \nonumber \qedhere
\end{align*}
\end{proof}

By Example \ref{exnew} and Proposition \ref{cong}\ref{conga}),  if 
\[
g(z) = \sum_{k=1}^\infty a_k \frac{z+2t_k}{(z+t_k)^2}\,,
\]
where $a_k,t_k>0$ and $\sum_{k=1}^\infty {a_k}(1+t_k)^{-1} < \infty$, then $g \in \mathcal{D}$.

\begin{prop}\label{Pp1A}
Let $f_j$, $j=1,\dots,n$, be holomorphic on $\mathbb C_{+}$ and map $(0,\infty)$ to $(0,\infty)$. Suppose that for every $\theta\in (-\pi/2,\pi/2)$ there exist  $b_\theta,c_\theta,d_\theta>0$ such that 
\begin{equation}\label{R1+}
|f_j(te^{i\theta})|\le d_{\theta}f_j(b_\theta t),\qquad t>0, \, j=1,\dots,n
\end{equation}
and
\begin{equation}\label{R2+}
|\operatorname{Im} f_j(te^{i\theta})|\le c_{\theta} t f_j'(b_\theta t),\qquad t>0, j=1,\dots,n.
\end{equation}
Let
\[
F_n=f_1\cdots f_n.
\]
Then
\begin{equation}\label{Im1}
|F_n(te^{i\theta})|\le d_\theta^n F_n(b_\theta t),\quad |\operatorname{Im}F_n(te^{i\theta})|\le c_\theta d_\theta^{n-1} t F_n'(b_\theta t),\qquad t>0.
\end{equation}
\end{prop}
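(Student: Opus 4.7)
The first inequality in \eqref{Im1} is immediate: since the absolute value is multiplicative,
\[
|F_n(te^{i\theta})| = \prod_{j=1}^n |f_j(te^{i\theta})| \le \prod_{j=1}^n d_\theta f_j(b_\theta t) = d_\theta^n F_n(b_\theta t)
\]
by \eqref{R1+}. So the only substantive content is the estimate on the imaginary part.

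For the second inequality my plan is to argue by induction on $n$. For $n=1$ it is just \eqref{R2+}. For the inductive step, write $F_n = F_{n-1} \cdot f_n$ and apply the elementary identity
\[
\operatorname{Im}(uv) = \operatorname{Re}(u)\operatorname{Im}(v) + \operatorname{Im}(u)\operatorname{Re}(v),
\]
which gives
\[
|\operatorname{Im} F_n(te^{i\theta})| \le |F_{n-1}(te^{i\theta})|\,|\operatorname{Im} f_n(te^{i\theta})| + |\operatorname{Im} F_{n-1}(te^{i\theta})|\,|f_n(te^{i\theta})|.
\]
I would then insert the inductive bounds $|F_{n-1}(te^{i\theta})| \le d_\theta^{n-1} F_{n-1}(b_\theta t)$ and $|\operatorname{Im} F_{n-1}(te^{i\theta})| \le c_\theta d_\theta^{n-2} t F_{n-1}'(b_\theta t)$ together with \eqref{R1+} and \eqref{R2+} applied to $f_n$, and factor out $c_\theta d_\theta^{n-1} t$ to obtain
\[
|\operatorname{Im} F_n(te^{i\theta})| \le c_\theta d_\theta^{n-1} t \bigl[F_{n-1}(b_\theta t) f_n'(b_\theta t) + F_{n-1}'(b_\theta t) f_n(b_\theta t)\bigr].
\]
The bracket is exactly $F_n'(b_\theta t)$ by the Leibniz rule applied on $(0,\infty)$, which closes the induction.

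There is essentially no obstacle: the argument is just the product rule combined with the standard splitting of the imaginary part of a product, and the exponents on $d_\theta$ and $c_\theta$ match because one factor of $d_\theta$ is absorbed at each step while $c_\theta$ appears only once (in whichever factor contributes the imaginary part). The hypotheses are arranged precisely so that $b_\theta$ and $c_\theta$ can be taken uniform in $j$, which is what makes the induction go through cleanly; if the constants depended on $j$, one would have to track $\max$/$\prod$ of them but the structure of the estimate would be the same.
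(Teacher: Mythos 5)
Your proof is correct and follows essentially the same route as the paper: induction on $n$, the bound $|\operatorname{Im}(uv)|\le |u|\,|\operatorname{Im}v|+|\operatorname{Im}u|\,|v|$ for the product, and the Leibniz rule on $(0,\infty)$ to recombine the two terms into $F_n'(b_\theta t)$, with the exponents of $c_\theta$ and $d_\theta$ tracked exactly as in the paper.
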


\begin{proof}
We proceed by induction.  If $n=1$ then (\ref{Im1}) reduces to  (\ref{R1+}) and (\ref{R2+}).
Suppose that  the inequalities (\ref{Im1}) hold for fixed $n\in \mathbb N$.
Then we have
\begin{align*}
|\operatorname{Im}F_{n+1}(te^{i\theta})|
&= |\operatorname{Im}F_n(te^{i\theta})f_{n+1}(te^{i\theta})|\\
&\le |\operatorname{Im}F_n(te^{i\theta})|\cdot |f_{n+1}(te^{i\theta})|
+ |F_n(te^{i\theta})| \cdot |\operatorname{Im}f_{n+1}(te^{i\theta})|\\
&\le t c_\theta  d_\theta^{n-1}  F_n'(b_\theta t) d_\theta f_{n+1}(b_\theta t)+ 
d_\theta^n F_n(b_\theta t) tc_\theta f_{n+1}'(b_\theta t)\\
&= t c_\theta d_\theta^n (F_n\cdot f_{n+1})'(b_\theta t)\\
&= t c_\theta d_\theta^n F_{n+1}'(b_\theta t).
\end{align*}
The rest is clear.
\end{proof}

\begin{cor}\label{Pp1}
Let $\{f_j: j = 1,\dots, n \} \subset \mathcal{BF}$, and let
\[
F_n=f_1\cdots f_n.
\]
Then 
\[
|\operatorname{Im} F_n(te^{i\theta})|\le \frac{|\sin\theta|}{\left(\cos\theta\right)^{2(n-1)}} t F_n'(t\cos\theta),\qquad t>0, \; \theta \in (-\pi/2,\pi/2).
\]
In particular, if $F_n\in \mathcal{NP}_+$, then $F_n \in \mathcal{D}_0^{+} \cap \mathcal{D}_\infty^+ \subset \mathcal{D}$.
\end{cor}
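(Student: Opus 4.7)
The plan is to apply Proposition \ref{Pp1A} to the family $\{f_j\}_{j=1}^n$, with constants $b_\theta, c_\theta, d_\theta$ tailored to Bernstein functions. From Proposition \ref{pbf}(\ref{pbf3}), for each $f_j \in \mathcal{BF}$ we have
\[
|f_j(te^{i\theta})| \le \frac{\kappa_\theta}{\cos\theta} f_j(t\cos\theta) \le \frac{1}{\cos^2\theta}\, f_j(t\cos\theta), \qquad t>0,
\]
so condition \eqref{R1+} holds with $b_\theta = \cos\theta$ and $d_\theta = 1/\cos^2\theta$ (uniformly in $j$). From Proposition \ref{cong}\ref{congb}) (specifically \eqref{BFD}), condition \eqref{R2+} holds with the same $b_\theta = \cos\theta$ and $c_\theta = |\sin\theta|$.

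Plugging into the conclusion of Proposition \ref{Pp1A} gives
\[
|\operatorname{Im} F_n(te^{i\theta})| \le c_\theta d_\theta^{n-1}\, t F_n'(b_\theta t) = \frac{|\sin\theta|}{(\cos\theta)^{2(n-1)}}\, t F_n'(t\cos\theta), \qquad t>0,
\]
which is the stated inequality.

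For the ``in particular'' assertion, assume $F_n \in \mathcal{NP}_+$. Each $f_j \in \mathcal{BF}$ is nondecreasing on $(0,\infty)$ with $f_j \ge 0$ (see \eqref{defBF}), so $F_n = f_1 \cdots f_n$ is nondecreasing on $(0,\infty)$. Thus, taking $a = \infty$, $b = \cos\theta$, and $c = |\sin\theta|(\cos\theta)^{-2(n-1)}$ in Definition \ref{defD*}, the estimate above shows that $F_n$ satisfies both the condition defining $\mathcal{D}_0^+$ (with $a/b = \infty$) and the condition defining $\mathcal{D}_\infty^+$. Hence $F_n \in \mathcal{D}_0^+ \cap \mathcal{D}_\infty^+ \subset \mathcal{D}$ by Definition \ref{defD}. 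No step is a genuine obstacle here; the main content of the corollary is really packaged in the inductive bookkeeping of Proposition \ref{Pp1A}, and the only choice to be made is to iterate the $\kappa_\theta$-bound of Proposition \ref{pbf}(\ref{pbf3}) once more to obtain a bound of the form $d_\theta f_j(b_\theta t)$ with the \emph{same} $b_\theta$ as in the imaginary-part estimate, which forces the quadratic exponent $(\cos\theta)^{-2(n-1)}$.
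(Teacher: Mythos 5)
Your proof is correct and follows essentially the same route as the paper: apply Proposition~\ref{Pp1A} with $b_\theta=\cos\theta$, $c_\theta=|\sin\theta|$, $d_\theta=1/\cos^2\theta$, using (\ref{BFD}) for (\ref{R2+}) and the chain $|f_j(te^{i\theta})|\le f_j(t)/\cos\theta\le f_j(t\cos\theta)/\cos^2\theta$ for (\ref{R1+}) (you derive this from Proposition~\ref{pbf}(\ref{pbf3}); the paper uses (\ref{ts4}) and (\ref{ts3}) directly, but the inequality is the same). One small wrinkle in the final paragraph: taking $a=\infty$ in the $\mathcal{D}_\infty^+$ verification makes the interval $(a,\infty)$ empty, so it does not really verify anything; since the inequality and the monotonicity of $F_n$ hold on all of $(0,\infty)$, you should instead take any finite $a$ (say $a=1$) to conclude $F_n\in\mathcal{D}_\infty^+$, so the result stands.
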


\begin{proof}  Let $t>0$ and $\theta \in (-\pi/2,\pi/2)$.  By (\ref{ts4}) and (\ref{ts3}) with $r=t\cos\theta$,
\begin{equation*}
|f_j(te^{i\theta})|\le \frac{f_j(t)}{\cos\theta} \le  \frac{f_j(t\cos\theta)}{\cos^2\theta},
\end{equation*}
By (\ref{BFD}),
\[
|\operatorname{Im}\,f_j(te^{i\theta})|\le |\sin\theta| t f_j'(t\cos\theta).
\]
Now the statement is a direct consequence of Proposition \ref{Pp1A}.
\end{proof}

\begin{cor}\label{Pp2} If 
$\{f_j: j =1,\dots,n \} \subset \mathcal{BF}$, and
$\alpha_j,\beta_j$, $j =1,\dots,n$, are such that  
\[
0 < \alpha_j \le 1, \quad 0 < \beta_j \le 1, \quad \sum_{j=1}^n\alpha_j\beta_j\le1,
\]
then 
\begin{equation}\label{Exq}
\tilde{F}_n(z) :=f_1(z^{\alpha_1})^{\beta_1}\cdots f_n(z^{\alpha_n})^{\beta_n} 
\end{equation}
belongs to $\mathcal{D}$.
\end{cor}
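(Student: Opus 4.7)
The plan is to apply Proposition~\ref{Pp1A} to the factors $g_j(z) := f_j(z^{\alpha_j})^{\beta_j}$ with a \emph{common} choice $b_\theta = \cos\theta$, showing that the product $\tilde F_n$ lies in $\mathcal{D}_0^+ \cap \mathcal{D}_\infty^+ \subset \mathcal{D}$. First one checks $\tilde F_n \in \mathcal{NP}_+$: for $z \in \Sigma_\theta$ with $\theta \in (0,\pi/2)$, Proposition~\ref{pnpp}(\ref{pnpp1}) gives $z^{\alpha_j} \in \Sigma_{\alpha_j\theta}$, then $f_j(z^{\alpha_j}) \in \Sigma_{\alpha_j\theta}$, so $g_j(z) \in \Sigma_{\alpha_j\beta_j\theta}$, and hence $\tilde F_n(z) \in \Sigma_{\theta \sum_j \alpha_j\beta_j} \subseteq \Sigma_\theta$; positivity on $(0,\infty)$ is clear.

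Condition (\ref{R1+}) for $g_j$ is quick: Proposition~\ref{pbf}(\ref{pbf3}) gives $|f_j(t^{\alpha_j} e^{i\alpha_j\theta})| \le \kappa_{\alpha_j\theta} f_j(t^{\alpha_j})$, and (\ref{ts3}) applied at the two positive points $t^{\alpha_j}$ and $(\cos\theta)^{\alpha_j} t^{\alpha_j}$ yields $f_j(t^{\alpha_j}) \le (\cos\theta)^{-\alpha_j} f_j((\cos\theta)^{\alpha_j} t^{\alpha_j})$; raising to the power $\beta_j$ produces (\ref{R1+}) with $b_\theta = \cos\theta$ and a finite $d_\theta$ uniform in $j$. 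For (\ref{R2+}), write $w := f_j(t^{\alpha_j} e^{i\alpha_j\theta}) = \rho e^{i\psi}$ with $|\psi| \le \alpha_j|\theta| < \pi/2$; monotonicity of $\sin$ on $[0,\pi/2]$ combined with $\beta_j \le 1$ gives $|\sin(\beta_j\psi)| \le |\sin\psi|$, so
\[
|\operatorname{Im} g_j(te^{i\theta})| = \rho^{\beta_j} |\sin(\beta_j\psi)| \le \rho^{\beta_j-1} |\operatorname{Im} w|.
\]
Using $\rho \ge f_j(\cos(\alpha_j\theta) t^{\alpha_j})$ from Proposition~\ref{pbf}(\ref{pbf3}) together with $\beta_j - 1 \le 0$, and the Bernstein estimate (\ref{BFD}) for $|\operatorname{Im} w|$, bounds the right-hand side by $|\sin(\alpha_j\theta)|\, t^{\alpha_j}\, f_j'(\cos(\alpha_j\theta) t^{\alpha_j})\, f_j(\cos(\alpha_j\theta) t^{\alpha_j})^{\beta_j-1}$.

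The crux is bridging the evaluation point $\cos(\alpha_j\theta) t^{\alpha_j}$ with the point $(\cos\theta)^{\alpha_j} t^{\alpha_j}$ that naturally appears when one expands
\[
t g_j'(\cos\theta \cdot t) = \alpha_j\beta_j (\cos\theta)^{\alpha_j - 1} t^{\alpha_j}\, f_j((\cos\theta)^{\alpha_j} t^{\alpha_j})^{\beta_j - 1}\, f_j'((\cos\theta)^{\alpha_j} t^{\alpha_j}).
\]
The concavity of $\log\cos$ on $[0,\pi/2)$ applied to the convex combination $\alpha_j\theta = \alpha_j \cdot \theta + (1-\alpha_j) \cdot 0$ yields $(\cos\theta)^{\alpha_j} \le \cos(\alpha_j\theta)$; since $f_j$ is increasing and $f_j' \in \mathcal{CM}$ is decreasing, both $f_j^{\beta_j - 1}$ and $f_j'$ are therefore \emph{larger} at $(\cos\theta)^{\alpha_j} t^{\alpha_j}$ than at $\cos(\alpha_j\theta) t^{\alpha_j}$; combined with $(\cos\theta)^{\alpha_j - 1} \ge 1$, this gives
\[
|\operatorname{Im} g_j(te^{i\theta})| \le \frac{|\sin(\alpha_j\theta)|}{\alpha_j \beta_j}\, t\, g_j'(\cos\theta \cdot t),
\]
i.e.\ (\ref{R2+}) with $c_\theta = \max_j |\sin(\alpha_j\theta)|/(\alpha_j\beta_j)$, which is finite because $j$ ranges over a finite set.

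With both hypotheses of Proposition~\ref{Pp1A} verified uniformly in $j$, the proposition delivers $|\operatorname{Im} \tilde F_n(te^{i\theta})| \le c_\theta d_\theta^{n-1}\, t\, \tilde F_n'(\cos\theta \cdot t)$ for all $t>0$; since each $g_j'(t) = \alpha_j\beta_j t^{\alpha_j - 1} f_j(t^{\alpha_j})^{\beta_j - 1} f_j'(t^{\alpha_j})$ is non-negative on $(0,\infty)$, the product $\tilde F_n$ is increasing there, and so $\tilde F_n \in \mathcal{D}_0^+ \cap \mathcal{D}_\infty^+ \subset \mathcal{D}$. The main obstacle — forcing a somewhat careful computation rather than a direct appeal to Corollary~\ref{Pp1} — is the need to arrange a single $b_\theta$ that works simultaneously for all the factors despite the differing exponents $\alpha_j$; the concavity inequality $(\cos\theta)^{\alpha_j} \le \cos(\alpha_j\theta)$ is precisely the tool that reconciles the scaling $\cos(\alpha_j\theta)$ appearing in the Bernstein bound (\ref{BFD}) for $f_j$ on $\Sigma_{\alpha_j\theta}$ with the scaling $(\cos\theta)^{\alpha_j}$ produced by differentiating $z^{\alpha_j}$.
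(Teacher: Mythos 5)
Your proof is correct, but it takes a genuinely different and more laborious route than the paper's. The paper's proof is a two-line observation: since $z\mapsto z^{\alpha_j}$ and $z\mapsto z^{\beta_j}$ are Bernstein functions and $\mathcal{BF}$ is closed under composition (Proposition \ref{pbf}(\ref{pbf6})), each factor $g_j(z)=f_j(z^{\alpha_j})^{\beta_j}$ is itself a Bernstein function; the angle count $\sum_j\alpha_j\beta_j\le 1$ shows $\tilde F_n\in\NPp$; and then Corollary \ref{Pp1} (the product-of-Bernstein-functions case) applies verbatim. You instead bypass the observation that $g_j\in\mathcal{BF}$ and verify the hypotheses (\ref{R1+})--(\ref{R2+}) of Proposition \ref{Pp1A} for the composite factors by hand, which forces you to reconcile the point $\cos(\alpha_j\theta)\,t^{\alpha_j}$ coming from (\ref{BFD}) with the point $(\cos\theta)^{\alpha_j}t^{\alpha_j}$ coming from differentiating $z^{\alpha_j}$; your use of the concavity inequality $(\cos\theta)^{\alpha_j}\le\cos(\alpha_j\theta)$ together with the monotonicity of $f_j$ and of $f_j'\in\mathcal{CM}$ does this correctly, and all the individual estimates (the bound $|\sin(\beta_j\psi)|\le|\sin\psi|$, the lower bound $\rho\ge f_j(\cos(\alpha_j\theta)t^{\alpha_j})$ combined with $\beta_j-1\le 0$, and the factor $(\cos\theta)^{\alpha_j-1}\ge1$) check out. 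What your approach buys is explicit constants $c_\theta,d_\theta$ expressed directly in terms of the $\alpha_j,\beta_j$ and a self-contained verification that does not rely on the composition-closure of $\mathcal{BF}$; what it costs is length, since the single structural fact $g_j\in\mathcal{BF}$ would have made the entire computation unnecessary.
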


\begin{proof}  Each function $f_j(z^{\alpha_j})^{\beta_j}$ is a Bernstein function, and it maps $\C_+$ into $\Sigma_{\pi\alpha_j\beta_j/2}$ by Proposition \ref{pnpp}(\ref{pnpp1}).  Hence $\tilde F_n$ maps $\C_+$ into $\C_+$.  Moreover $\tilde F_n$ maps $(0,\infty)$ to $(0,\infty)$, so $\tilde F_n \in \NPp$.  By Corollary \ref{Pp1}, $\tilde F_n \in \mathcal{D}$. 
\end{proof} 

When $\beta_j = 1$ for $j=1,\dots,n$, the function $\tilde F_n$ of Corollary \ref{Pp2} belongs to $\mathcal{BF}$.  This is shown in \cite[Corollary 3.8(vi)]{SchilSonVon2010} for $n=2$ and the general case follows by a simple induction.   The following example shows that some functions of the form (\ref{Exq}) are not Bernstein functions.

\begin{example}  \label{nonbf}  The functions $z$ and $1-e^{-z}$ are both Bernstein functions.  Moreover 
\[
\tilde{F}(z):=z^{1/2}(1-e^{-z})^{1/2}
\]
is not a Bernstein function \cite[Example 5.7]{BGT15}, but $\tilde F \in \mathcal{D}$ by Corollary \ref{Pp2}. 

For $\alpha \in (0,1)$, let $\tilde G_\alpha(z) = \tilde F(z^\alpha)$.  Since $\mathcal{BF}$ is closed under pointwise limits \cite[Corollary 3.8(ii)]{SchilSonVon2010}, there exists $\alpha \in (0,1)$ such that $\tilde G_\alpha \notin \mathcal{BF}$, but $\tilde G_\alpha \in \mathcal{D}$ by Corollary \ref{Pp2}.
\end{example}

\begin{example} \label{pans}
If  $f\in \mathcal{BF}$ and $\alpha\in (0,1)$ then 
\[
g_\alpha(z):=[f(z^\alpha)]^{1/\alpha}
\]
belongs to $\mathcal{NP}_{+}$.   If $\alpha \in (0,1/2]$  then $g_\alpha\in \mathcal{BF}$
by Proposition \ref{pbf}(\ref{pbf2}).   Although it is an open question whether $g_\alpha$ is necessarily in $\mathcal{BF}$ when $\alpha \in (1/2,1)$, we can show that $g_\alpha \in \mathcal{D}$.  For that case, let
\begin{gather*}
f_1 = f_2 = f, \\
\alpha_1=\alpha_2=\alpha, \quad \beta_1= \beta_2 = \frac{1}{2\alpha} \in (\tfrac12,1).
\end{gather*}
Then $g_\alpha = \tilde F_2$ as in (\ref{Exq}).  Hence it follows from Corollary \ref{Pp2} that $g_\alpha \in \mathcal{D}$.
\end{example}


It is often appropriate to restrict attention to the
subclass of Bernstein functions formed by the complete Bernstein
functions. This class has a rich structure which makes it especially
useful in applications.  A Bernstein function $\varphi$ is said to be
{\it a complete Bernstein function} if the measure $\mu$ in its
L\'evy-Khintchine representation (\ref{defBF}) has a completely
monotone density with respect to Lebesgue measure.
The set of all complete Bernstein functions will be denoted by $\mathcal{CBF}$.

The class of complete Bernstein functions allows a number of characterisations.
For example (see
\cite[Theorem 6.2]{SchilSonVon2010}),
a function $\varphi : (0,\infty)\mapsto (0,\infty)$
belongs to $\mathcal{CBF}$ if and only if $\varphi$ admits a representation  
which is given by
\begin{equation}\label{Cbf}
\varphi(t)=a+bt+\int_{0+}^\infty
\frac{t}{t+s} \,\nu(ds) \qquad t>0,
\end{equation}
where $a,b\ge 0$ are non-negative constants and $\nu$ is a positive measure
on $(0,\infty)$ such that
\[
\int_{0+}^\infty \frac{\nu(ds)}{1+s}<\infty.
\]
The triple $(a,b,\nu)$ is uniquely determined by these properties, and it is called the \emph{Stieltjes representation} of $\varphi$.  Then $\varphi$ has a holomorphic extension to $\C\setminus (-\infty,0]$ given by (\ref{Cbf}), and we shall identify $\varphi$ with this extension.  Note that $\varphi: (0,\infty) \to (0,\infty)$ is increasing, and it maps the upper and lower half-planes $\C^+$ and $\C^-$ into themselves.   If $\psi_2(z) = \varphi(z^{2})^{1/2}$ for $z\in\C_+$, then $\psi_2 \in \NPp$.   Consequently some properties of complete Bernstein functions can be readily deduced from Proposition \ref{pnpp}, although proofs from the Stieltjes representation may sometimes be more direct.

\begin{prop} \label{pcbf}
Let $\varphi \in \mathcal{CBF}$.
\begin{enumerate}[\rm1.]
\item \label{pcbf1} $\varphi$ maps $\Sigma_\theta$ into itself for all $\theta \in (0,\pi)$.
\item \label{pcbf2}  If $\alpha \in (0,1)$ and $\psi_\alpha(z) = \varphi(z^\alpha)^{1/\alpha}$ for $z \in \C \setminus (-\infty,0]$, then $\psi_\alpha \in \mathcal{CBF}$.
\item \label{pcbf3} For all  $\theta\in(-\pi,\pi)$ and $t>0$,
\begin{equation}\label{R10}
\varphi(t) \cos(\theta/2) \le |\varphi(te^{i\theta})|\le \frac{\varphi(t)}{\cos(\theta/2)}.
\end{equation}
\item \label{pcbf4} For all  $\theta\in (0,\pi)$,
\begin{equation*}
\lim_{z \in \Sigma_\theta,|z|\to\infty} \varphi(z) = \varphi(\infty).
\end{equation*}
\end{enumerate}
\end{prop}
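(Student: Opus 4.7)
The plan is to work directly from the Stieltjes representation (\ref{Cbf}) of $\varphi$, which is sharper than the general $\NPp$-representation (\ref{npp2}) and so produces stronger angular estimates (sector $\Sigma_\theta$ for $\theta$ up to $\pi$ rather than $\pi/2$). For part (\ref{pcbf1}), I would analyse (\ref{Cbf}) term by term. For each fixed $s>0$, the M\"obius map $z \mapsto z/(z+s) = 1 - s/(z+s)$ sends each sector $\Sigma_\theta$, $\theta \in (0,\pi)$, into itself: if $\arg z = \phi$ with $|\phi| < \theta$, then $\arg(z+s)$ lies strictly between $0$ and $\phi$, so $\arg(z/(z+s))$ has the same sign as $\phi$ and modulus at most $|\phi|$. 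The terms $a \ge 0$ and $bz \in \overline{\Sigma_\theta}$ contribute harmlessly, and integration against the positive measure $\nu$ preserves the closed sector, with strict containment in $\Sigma_\theta$ following from analyticity.

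For part (\ref{pcbf2}) I would invoke the characterisation \cite[Theorem 6.2]{SchilSonVon2010}: $\psi \in \mathcal{CBF}$ iff $\psi$ extends holomorphically to $\C\setminus(-\infty,0]$, is positive on $(0,\infty)$, and maps the upper half-plane into itself. Using principal branches, $z \mapsto z^\alpha$ sends $\C\setminus(-\infty,0]$ into $\Sigma_{\alpha\pi}$; by part (\ref{pcbf1}), $\varphi$ preserves $\Sigma_{\alpha\pi}$ (since $\alpha\pi < \pi$); then $w \mapsto w^{1/\alpha}$ lands back in $\C\setminus(-\infty,0]$, so $\psi_\alpha$ is well-defined holomorphically there and manifestly positive on $(0,\infty)$. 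The same argument-chasing, restricted to $z \in \C^+$, confirms $\psi_\alpha(\C^+) \subset \C^+$. For (\ref{pcbf4}), since $\varphi$ is increasing on $(0,\infty)$, $\varphi(\infty)$ exists in $[0,\infty]$; applying (\ref{pcbf2}) with $\alpha = 1/2$ gives $\psi_2(z) := \varphi(z^2)^{1/2} \in \mathcal{CBF} \subset \NPp$ with $\psi_2(\infty) = \varphi(\infty)^{1/2}$. Since the squaring map is a conformal bijection $\Sigma_{\theta/2} \to \Sigma_\theta$ and $\theta/2 \in (0,\pi/2)$, Proposition \ref{pnpp}(\ref{pnpp4b}) applied to $\psi_2$ on $\Sigma_{\theta/2}$ delivers the limit.

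For part (\ref{pcbf3}), the upper bound is a direct computation from (\ref{Cbf}), based on the algebraic identity
\[
|te^{i\theta} + s|^2 = (t+s)^2 \cos^2(\theta/2) + (t-s)^2 \sin^2(\theta/2),
\]
which immediately gives $|te^{i\theta}+s| \ge (t+s)\cos(\theta/2)$ and hence $|te^{i\theta}/(te^{i\theta}+s)| \le (\cos(\theta/2))^{-1}\, t/(t+s)$. Combined with the trivial bounds $a \le a/\cos(\theta/2)$ and $bt \le bt/\cos(\theta/2)$, the triangle inequality applied to (\ref{Cbf}) yields $|\varphi(te^{i\theta})| \le \varphi(t)/\cos(\theta/2)$. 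The lower bound is the main obstacle, since the triangle inequality cannot be reversed through the integral in (\ref{Cbf}). The route I would take is to exploit the duality $\widetilde\varphi(z) := z/\varphi(z) \in \mathcal{CBF}$ whenever $\varphi \in \mathcal{CBF}$ (equivalently, $\varphi(z)/z$ is a Stieltjes function; see \cite{SchilSonVon2010}). Applying the just-proved upper bound to $\widetilde\varphi$ gives $t/|\varphi(te^{i\theta})| = |\widetilde\varphi(te^{i\theta})| \le \widetilde\varphi(t)/\cos(\theta/2) = t/(\varphi(t)\cos(\theta/2))$, which rearranges to the required lower bound $|\varphi(te^{i\theta})| \ge \varphi(t)\cos(\theta/2)$. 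A self-contained route avoiding this duality only seems to produce the weaker bound with $\cos^2(\theta/2)$ (e.g.\ via $\psi_2 \in \NPp$ and (\ref{ts4})), which is precisely why invoking the complete-Bernstein duality is essential.
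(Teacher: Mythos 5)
Your proof is essentially correct and follows the same general strategy as the paper (work from the Stieltjes representation, use $\psi_2(z)=\varphi(z^2)^{1/2}\in\NPp$ as a bridge to Proposition~\ref{pnpp}, and use the characterisation from \cite[Theorem 6.2]{SchilSonVon2010} for part~2). The added value in your write-up is that you unpack two steps the paper leaves to citations: for part~1 you give the direct M\"obius-map argument from (\ref{Cbf}) (the paper cites \cite[Corollary 6.6]{SchilSonVon2010}), and for part~3 you give an explicit proof of (\ref{R10}) — upper bound by the $|te^{i\theta}+s|\ge(t+s)\cos(\theta/2)$ estimate applied termwise in (\ref{Cbf}), lower bound by the involution $\varphi\mapsto z/\varphi(z)$ on $\mathcal{CBF}$ — where the paper just cites \cite[Proposition 2.4]{BCT}. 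The duality trick is the right way to get the sharp constant $\cos(\theta/2)$; the $\cos^2(\theta/2)$ you would get from (\ref{ts4}) applied to $\psi_2$ is indeed weaker.

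There is, however, one small slip in part~4. You write that ``applying (\ref{pcbf2}) with $\alpha=1/2$ gives $\psi_2(z):=\varphi(z^2)^{1/2}\in\mathcal{CBF}$.'' Substituting $\alpha=1/2$ in (\ref{pcbf2}) produces $\psi_{1/2}(z)=\varphi(z^{1/2})^{2}$, not $\varphi(z^2)^{1/2}$ — those are different functions. Moreover $\psi_2$ is \emph{not} a complete Bernstein function: its natural holomorphic domain is $\C_+$ (the right half-plane), not $\C\setminus(-\infty,0]$, since $z\mapsto z^2$ maps $\C_+$ onto $\Sigma_\pi$ but maps the upper half-plane $\C^+$ onto the wrong slit plane. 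What you actually need, and what does hold, is merely that $\psi_2\in\NPp$; this was recorded in the paper just before the proposition (it follows from $\varphi$ mapping $\C^+$ and $\C^-$ into themselves and $(0,\infty)$ into $(0,\infty)$), and is exactly the hypothesis required to invoke Proposition~\ref{pnpp}(\ref{pnpp4b}). With that replacement your argument for part~4 goes through as stated: $z\mapsto z^2$ is a conformal bijection $\Sigma_{\theta/2}\to\Sigma_\theta$ with $\theta/2\in(0,\pi/2)$, so the nontangential limit of $\varphi$ in $\Sigma_\theta$ at $\infty$ pulls back to the nontangential limit of $\psi_2$ in $\Sigma_{\theta/2}$, which Proposition~\ref{pnpp}(\ref{pnpp4b}) supplies.
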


\begin{proof}  (\ref{pcbf1})  and (\ref{pcbf4}) can be deduced by applying the corresponding parts of Proposition \ref{pnpp} to $\psi_2(z) = \varphi(z^2)^{1/2}$.  Moreover, (\ref{pcbf1}) and (\ref{pcbf3}) can be deduced from the Stieltjes representation (see \cite[Corollary 6.6]{SchilSonVon2010} for (\ref{pcbf1}), and \cite[Proposition 2.4]{BCT} for (\ref{pcbf3})).  For (\ref{pcbf2}), observe that $\psi_\alpha$ maps $\C^+$ to $\C^+$, $(0,\infty)$ to $(0,\infty)$, and $f(0+)$ exists and is real.  This implies that $\psi_\alpha \in \mathcal{CBF}$ \cite[Theorem 6.2]{SchilSonVon2010}.
\end{proof}

\begin{lemma}\label{phi}
Let $\varphi\in \mathcal{CBF}$. Then for any $\theta\in(0,\pi)$ and $t>0$,
\begin{equation}\label{R2}
|\operatorname{Im}\varphi(te^{i\theta})|\le
2\tan(\theta/2) \,t \varphi'(t).
\end{equation}
\end{lemma}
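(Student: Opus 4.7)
My plan is to work directly from the Stieltjes representation \eqref{Cbf}, which gives explicit formulas for both $\operatorname{Im}\varphi(te^{i\theta})$ and $\varphi'(t)$. Differentiating \eqref{Cbf} gives
\[
\varphi'(t) = b + \int_{0+}^\infty \frac{s}{(t+s)^2}\,\nu(ds),
\]
while for $z = te^{i\theta}$ with $\theta \in (0,\pi)$ a direct calculation yields
\[
\operatorname{Im}\frac{z}{z+s} = \frac{ts\sin\theta}{t^2 + 2ts\cos\theta + s^2},
\]
so
\[
\operatorname{Im}\varphi(te^{i\theta}) = bt\sin\theta + \int_{0+}^\infty \frac{ts\sin\theta}{t^2 + 2ts\cos\theta + s^2}\,\nu(ds).
\]
Since each integrand is non-negative for $\theta \in (0,\pi)$, the absolute value on the left of \eqref{R2} can simply be dropped.

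Comparing term by term with $2\tan(\theta/2)\, t\varphi'(t)$, the inequality \eqref{R2} will follow once I establish the two pointwise bounds
\[
\sin\theta \le 2\tan(\theta/2) \qquad \text{and} \qquad \frac{\sin\theta}{t^2+2ts\cos\theta+s^2} \le \frac{2\tan(\theta/2)}{(t+s)^2}, \quad s>0.
\]
The first is immediate since $\sin\theta = 2\sin(\theta/2)\cos(\theta/2)$ so $\sin\theta/(2\tan(\theta/2)) = \cos^2(\theta/2) \le 1$.

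The second, after the same substitution and cancelling $2\sin(\theta/2)$, reduces to
\[
\cos^2(\theta/2)(t+s)^2 \le t^2 + 2ts\cos\theta + s^2,
\]
and using $\cos^2(\theta/2) = (1+\cos\theta)/2$ this is easily seen to be equivalent to
\[
(\cos\theta - 1)\bigl((t^2+s^2)/2 - ts\bigr) \le 0,
\]
which holds by AM--GM together with $\cos\theta \le 1$. There is no serious obstacle here; the only subtlety is keeping track of signs to confirm the inequality points the right way (using $\cos\theta - 1 \le 0$). Integrating these pointwise bounds against $\nu$ and adding the $b$-term completes the proof.
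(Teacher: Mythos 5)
Your proof is correct and follows essentially the same route as the paper: both start from the Stieltjes representation \eqref{Cbf}, compute $\operatorname{Im}\varphi(te^{i\theta})$ termwise, and reduce the claim to the lower bound $|te^{i\theta}+s|^2 = t^2+2ts\cos\theta+s^2 \ge \cos^2(\theta/2)(t+s)^2$, which the paper obtains by citing the general inequality \eqref{ts1} and you verify directly via $\cos^2(\theta/2)=(1+\cos\theta)/2$ and $(t-s)^2\ge 0$. The only cosmetic difference is that the paper bundles the $b$-term into the same factor $1/\cos^2(\theta/2)$ rather than treating it via $\sin\theta\le 2\tan(\theta/2)$ separately; the content is identical.
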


\begin{proof}
By  (\ref{Cbf}), (\ref{ts1}) and the representation
\[
\varphi'(t)=b +\int_{0+}^\infty \frac{s\,\sigma(ds)}{(t+s)^2},\quad t>0,
\]
it follows that
\begin{align*}
\operatorname{Im}\varphi(te^{i\theta}) &=
bt\sin\theta+\int_{0+}^\infty
\operatorname{Im}\frac{t e^{i\theta}}{te^{i\theta}+s}\,\sigma(ds)
=t\sin\theta
\left(b+
\int_{0+}^\infty\frac{s\,\sigma(ds)}{|t e^{i\theta}+s|^2}\right) \\
&\le t\sin\theta
\left(b+
\frac{1}{\cos^2(\theta/2)}\int_{0+}^\infty\frac{s\,\sigma(ds)}{(t+s)^2}\right) 
= t\sin\theta
\left(b+ \frac{\varphi'(t)-b}{\cos^2(\theta/2)}\right)\\
&\le
 \frac{\sin\theta}{\cos^2(\theta/2)}\,t\varphi'(t)
=2\tan(\theta/2)\,t\varphi'(t). \qedhere
\end{align*}
\end{proof}

For complete Bernstein functions we can extend the resolvent formula of Lemma \ref{In2} to cover greater ranges for the parameters.

\begin{lemma}\label{In1}
Let $\varphi\in \mathcal{CBF}$, $q>1$, $\lambda \in \Sigma_{\pi/q}$ and $z \in \Sigma_{\pi - \pi/q}$.
Then 
\begin{multline}\label{R11}
(z+\varphi(\lambda))^{-1} \\
=\frac{1}{z+\varphi(\infty)}
+\frac{q}{\pi}\int_0^\infty
\frac{\operatorname{Im}\varphi(te^{i\pi/q})\,t^{q-1}}
{(z+\varphi(te^{i\pi/q}))(z+\varphi(te^{-i\pi/q}))(\lambda^q+t^q)} \,dt.
\end{multline}
Moreover the integral is absolutely convergent, and, for $\psi \in (0, \pi-\pi/q)$, there are constants $C_1$, $C_2$ (depending on $q$ and $\psi$) such that, for $\lambda \in \Sigma_\theta$ where $\theta \in (0,\pi/q)$, and $z \in \Sigma_\psi$,
\begin{multline*}
\left| \int_0^\infty 
\frac{\operatorname{Im}\varphi(te^{i\pi/q})\,t^{q-1}}
{(z+\varphi(te^{i\pi/q}))(z+\varphi(te^{-i\pi/q}))(\lambda^q+t^q)} \,dt\right|  \\
\le C_1 \int_0^\infty \frac{|\operatorname{Im} \varphi(te^{i\pi/q})|}{(|z|+\varphi(t))^2 t} \, dt
\le \frac{C_2}{|z|}.
\end{multline*}
\end{lemma}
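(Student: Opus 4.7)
The plan is to mimic the proof of Lemma \ref{In2}, exploiting the enhanced regularity of complete Bernstein functions in two ways: first to relax the hypothesis $q>2$ to $q>1$, and second to upgrade the convergence of the integral to absolute convergence with the claimed explicit bound.

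For the representation (\ref{R11}), fix $\lambda \in \Sigma_{\pi/q}$ and $z \in \Sigma_{\pi-\pi/q}$, and pick $\psi \in (|\arg z|, \pi-\pi/q)$. Choose $\theta$ with $q|\arg\lambda| < \theta < \min(\pi, q(\pi-\psi))$; such $\theta$ exists because $|\arg\lambda|<\pi/q$ and $\psi + |\arg\lambda| < (\pi-\pi/q) + \pi/q = \pi$. For $\zeta \in \Sigma_\theta$ one has $\zeta^{1/q} \in \Sigma_{\theta/q} \subset \C\setminus(-\infty,0]$, so $\varphi(\zeta^{1/q})$ is well-defined via (\ref{Cbf}) and lies in $\Sigma_{\theta/q}$ by Proposition \ref{pcbf}(\ref{pcbf1}). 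Since $\theta/q + \psi < \pi$, the estimate (\ref{ts1}) shows that $z + \varphi(\zeta^{1/q})$ is bounded away from zero for $z \in \Sigma_\psi$ and $\zeta \in \overline{\Sigma_\theta}\setminus\{0\}$. Applying Cauchy's theorem on $\gamma_R$ and deforming the contour to the negative semi-axis produces the analogue of (\ref{RR+}), and the arc contribution vanishes as $R\to\infty$: by Proposition \ref{pcbf}(\ref{pcbf4}), applicable since $\theta/q<\pi$, one has $\varphi(\zeta^{1/q}) \to \varphi(\infty)$ uniformly in $\zeta\in\overline{\Sigma_\theta}$ as $|\zeta|\to\infty$. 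In the case $\varphi(\infty)=\infty$ this yields (\ref{R11}) directly; in the case $\varphi(\infty)<\infty$ one first subtracts $(z+\varphi(\infty))^{-1}$ inside the integrand, exactly as in the second half of the proof of Lemma \ref{In2}.

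For absolute convergence and the first of the two bounds, one combines three estimates. By (\ref{Pest}) applied with $\psi + \pi/q < \pi$, there exists $c_1 = c_1(q,\psi) > 0$ such that
\begin{equation*}
|z + \varphi(te^{\pm i\pi/q})| \ge c_1\,(|z|+\varphi(t)), \qquad t>0,\ z\in \Sigma_\psi.
\end{equation*}
For $\lambda \in \Sigma_\theta$ with $\theta<\pi/q$ one has $\lambda^q \in \Sigma_{q\theta}$ with $q\theta<\pi$, and (\ref{ts1}) gives $|\lambda^q + t^q| \ge \cos(q\theta/2)\, t^q$. Multiplying these bounds produces the first inequality, with $C_1 = (c_1^2 \cos(q\theta/2))^{-1}$ (depending on $q,\psi$, and tacitly on $\theta$). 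For the second bound, apply Lemma \ref{phi} to obtain $|\operatorname{Im}\varphi(te^{i\pi/q})| \le 2\tan(\pi/(2q))\,t\varphi'(t)$; since $\varphi$ is increasing on $(0,\infty)$, the substitution $u=\varphi(t)$ yields
\begin{equation*}
\int_0^\infty \frac{|\operatorname{Im}\varphi(te^{i\pi/q})|}{(|z|+\varphi(t))^2\,t}\,dt \le 2\tan\!\big(\pi/(2q)\big) \int_0^\infty \frac{du}{(|z|+u)^2} \le \frac{2\tan(\pi/(2q))}{|z|},
\end{equation*}
so one may take $C_2 = 2\tan(\pi/(2q))$.

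The main technical point is verifying that the arc integral vanishes when $1<q\le 2$, so that $\pi/q \in [\pi/2,\pi)$; here Proposition \ref{pnpp}(\ref{pnpp4b}) no longer applies (its range of validity is $\theta<\pi/2$), and it is precisely the stronger Proposition \ref{pcbf}(\ref{pcbf4}) for complete Bernstein functions, which is derived from the Stieltjes representation (\ref{Cbf}), that enables the improvement from $q>2$ to $q>1$.
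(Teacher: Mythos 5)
Your proof is correct and follows essentially the same route as the paper, which simply runs the argument of Lemma \ref{In2} with Proposition \ref{pcbf} in place of Proposition \ref{pnpp} and obtains absolute convergence from the estimate (\ref{R2}) of Lemma \ref{phi} in place of (\ref{BFD}). One small point: when $1<q\le 2$ you cannot invoke (\ref{Pest}) literally for the lower bound on $|z+\varphi(te^{\pm i\pi/q})|$, since (\ref{Pest}) requires the argument of the function to lie in a sector of half-angle less than $\pi/2$; you need its $\mathcal{CBF}$ analogue, obtained from (\ref{ts1}) together with Proposition \ref{pcbf}(\ref{pcbf1}) and the lower bound in (\ref{R10}), which yields $|z+\varphi(te^{\pm i\pi/q})|\ge \cos\bigl((\psi+\pi/q)/2\bigr)\cos(\pi/(2q))\,(|z|+\varphi(t))$ --- exactly the same upgrade you already made for the arc integral.
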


\begin{proof}
The proof of (\ref{R11}) proceeds in the same way as Lemma \ref{In2}, with Proposition \ref{pcbf} replacing Proposition \ref{pnpp}.  In this case the integral can be shown to be absolutely convergent in the same way as for Bernstein functions in Section \ref{pnp}, using (\ref{R2}) instead of (\ref{BFD}) for the second inequality.  
\end{proof}

\begin{example} \label{exlog}
Consider the complete Bernstein function $\varphi(z) = \log(1+z)$, and let $q=2$.  For $z,\lambda\in\C_+$, Lemma \ref{In1} gives
\begin{align*} \label{exlog}
(z + \log(1+\lambda))^{-1} &= \frac{2}{\pi} \int_0^\infty \frac{ t \tan^{-1} t }{ \left(\big(z + \log \sqrt{1+t^2}\big)^2 + (\tan^{-1} t)^2 \right) (\lambda^2 + t^2) } \, dt \\
&= \int_0^{\pi/2} \frac{s \sin s} {\left( (z - \log \cos s)^2 + s^2\right) \cos^3 s (\lambda^2 + \tan^2 s)} \, ds.
\end{align*}
This may be compared with a formula in \cite[Example 2]{Mi98}.
\end{example}

\section{Preservation of sectorial operators} \label{pres}

In this section we give the main results about preserving sectoriality of operators under $\NPp$-functions.  For a closed linear operator $A$ on a complex Banach space $X$ we denote by $\dom(A)$, $\ran(A)$ and $\sigma(A)$ the {\em domain}, the {\em range} and the {\em spectrum} of $A$, respectively.    The space of bounded linear operators
on $X$ is denoted by $\mathcal L(X)$.

Let us recall that an operator $A$ on a Banach space $X$ is \emph{sectorial}
of angle $\omega\in [0,\pi)$ if $A$ is closed and densely defined,
$\sigma(A)\subset\overline{\Sigma}_\omega$
and for every $\omega'\in (\omega,\pi)$
there exists $M(A,\omega')<\infty$ such that
\[
\|z(A+z)^{-1}\|\le M(A,\omega'), \quad z\in \Sigma_{\pi-\omega'}.
\]
The set of sectorial operators of angle $\omega\in [0,\pi)$ on a Banach space $X$ will be denoted by
$\operatorname{Sect}_X(\omega)$, or simply by $\operatorname{Sect}(\omega)$ when no confusion is likely.  It is a standard fact in semigroup theory that $A \in \operatorname{Sect}(\omega)$ for some $\omega \in [0,\pi/2)$ if and only if $-A$ is the generator of a sectorially bounded holomorphic $C_0$-semigroup on $X$ of angle $\pi/2 -\omega$, i.e.\ $-A$ is the generator of a $C_0$-semigroup $(T(t))_{t\ge0}$ on $X$ which has a holomorphic extension to $\Sigma_{\pi/2-\omega}$ satisfying
\[
\sup \{ \|T(z)\| : z \in \Sigma_\theta \} < \infty, \qquad 0<\theta<\pi/2-\omega.
\]

We refer the reader to \cite[Section 2.1]{Ha06} for basic properties of sectorial operators. 
The reader should be aware that some texts, including \cite{Ha06}, do not require sectorial operators to be densely defined, and some texts require other properties.  We shall sometimes make explicit additional assumptions on our sectorial operators $A$, that $A$ is injective or it has dense range.  Note that any sectorial operator with dense range is injective, and we can consider the inverse operator $A^{-1}$ with $\dom(A^{-1}) = \ran(A)$.   Then $A^{-1}$ is densely defined, and it is sectorial with the same angle as $A$.
 
When $X$ is reflexive, the density of $\dom(A)$ can be omitted from the definition above, as it follows from the other properties.  If $A$ is sectorial and injective on a reflexive space, then $A$ has dense range.

When $A$ is a sectorial operator, $A(1+A)^{-2}$ is a bounded sectorial operator, and its range is $\dom(A) \cap \ran(A)$.  Moreover $A(1+A)^{-2}$ has dense range if $A$ has dense range.

We consider $A \in \operatorname{Sect}(\omega)$, where $\omega \in [0,\pi/2)$, and a function $f \in \NPp$, and ask when $f(A) \in \operatorname{Sect}(\omega)$.  It is unrealistic to give a complete charaterisation of all pairs $(A,f)$ for which there is a positive answer, but we can address two subsidiary questions.  One of them is a reformulation of (Q1) from the Introduction when $\omega \in [0,\pi/2)$ and of (Q2) when $\omega \in [0,\pi)$.
\begin{enumerate}
\item[(Q1$'$)]  For which functions $f \in \NPp$ is $f(A) \in \operatorname{Sect}(\omega)$ for all $A \in \operatorname{Sect}(\omega)$ on a given Banach space $X$?
\end{enumerate}
The other question is:
\begin{enumerate}
\item[(Q5)]  For which operators $A \in \operatorname{Sect}(\omega)$ is $f(A) \in \operatorname{Sect}(\omega)$ for all $f \in \NPp$?
\end{enumerate}

First we recall how $f(A)$ is defined, and point out why we sometimes have to assume that $A$ is injective or that $A$ has dense range.  There is no great loss in this, since we can confine our attention to the injective part of $A$ \cite[p.24 and Corollary 2.3.9]{Ha06}.

Let $A \in \operatorname{Sect}(\omega)$ where $\omega < \pi/2$.  For $f \in \NPp$ and $\theta \in (\omega,\pi/2)$, Proposition \ref{pnpp}(\ref{pnpp3}) shows that 
\[
|f(z)| = O(|z|^{-1}), \quad |z|\to0, z\in \Sigma_\theta, \qquad |f(z)| = O(|z|), \quad |z|\to\infty, z\in \Sigma_\theta.
\]
Then $f(A)$ can be defined by the holomorphic functional calculus \cite[Proposition 2.3.13]{Ha06}.  In particular, $f$ is regularised by the function $\tau^2$, where $\tau(z) = z/(1+z)^2$.  This means that $(f\cdot\tau^2)(A)$ is a bounded operator defined on $X$ by a Cauchy integral, and then 
\begin{align*}
\dom(f(A)) &= \left\{ x \in X: (f \cdot \tau^2)(A)x \in \ran(\tau^2(A)) \right\}, \\
 \tau^2(A)f(A)x &= (f \cdot \tau^2)(A)x.
\end{align*}
For this to be a single-valued operator $\tau^2(A)$ must be injective.  In this case $\tau^2(A) = (A (I+A)^{-2})^2$, and so we have to assume that $A$ is injective.  
Then, by \cite[Theorem 1.3.2c)]{Ha06},
\[
(f\cdot\tau^2)(A)x = f(A) \tau^2(A)x, \qquad x \in X.
\]
Thus the domain of $f(A)$ contains the range of $\tau(A)^2$.  If $A$ has dense range, then $f(A)$ has dense domain and $\dom(A) \cap \ran(A)$ is a core for $f(A)$ \cite[Proposition 2.3.13b)]{Ha06}.   Moreover $f(A)$ has dense range, because $\dom\left((1/f)(A)\right)$ has dense domain and
\[
f(A)(1/f)(A)x = x, \qquad x \in \dom\left((1/f)(A)\right),
\]
by \cite[Proposition 1.2.2d)]{Ha06}.

When $A \in \operatorname{Sect}(\omega)$ is injective and $f \in H^\infty(\Sigma_\theta)$ where $\theta \in (\omega,\pi)$, we can define $f(A)$ by the same method as above and the same properties hold.  We shall use standard properties of $H^\infty$-functional calculus on sectors $\Sigma_\theta$ for sectorial operators $A$, as described in \cite[Chapters 2,5]{Ha06}, \cite{KW04}.  In particular we say that $A$ has \emph{bounded $H^\infty$-calculus} on $\Sigma_\theta$ if $f(A) \in \mathcal{L}(X)$ for all $f \in H^\infty(\Sigma_\theta)$.  Note that then $\|f(A)\| \le C\|f\|_{H^\infty(\Sigma_\theta)}$ for some constant $C$ (depending on $A$) \cite[p.112]{Ha06}.  We shall make free use of the Composition Rule \cite[Theorem 2.4.2]{Ha06}, the characterisation of semigroup generators given in \cite[Proposition 2.5]{BHM13}, and the compatibility of the half-plane calculus and the sectorial calculus \cite[Proposition 2.8]{BHM13}.  A special case of the Composition Rule shows that if $A$ has bounded $H^\infty$-calculus on $\Sigma_\theta$ and $\alpha \in (0,\pi/\theta)$, then $A^\alpha$ has bounded $H^\infty$-calculus on $\Sigma_{\alpha\theta}$.  


When $-A$ generates a bounded $C_0$-semigroup $(T(t))_{t\ge0}$ and $f$ is a Bernstein function, we can define $f(A)$ as in \cite[Chapter 13]{SchilSonVon2010} (the reader should be aware that in \cite{SchilSonVon2010}, the notation for $f(A)$ is $A^f$ and $A$ denotes the generator of the $C_0$-semigroup, so there are some differences of sign). Then $\dom(A)$ is a core for $f(A)$ and 
\begin{equation} \label{bpfc}
f(A)x = ax + bAx + \int_{0+}^\infty (x - T(t)x) \, \mu(dt), \qquad x \in \dom(A),
\end{equation}
where $(a,b,\mu)$ is the L\'evy triple for $f$ \cite[Theorem 13.6]{SchilSonVon2010}.  When $A$ is injective, this definition of $f(A)$ agrees with the definition by the holomorphic functional calculus \cite[Proposition 3.6]{GT}.

  The fractional powers $A^q$ of a sectorial operator $A$ can be defined by the extended functional calculus for $q>0$, and for $q \in \R$ if $A$ is injective.  Note that $z^q \in \NPp$ if and only if $|q|\le1$, but the higher fractional powers can be regularised by $\tau^n$ for large $n \in \N$.   It is well known that  fractional powers preserve sectoriality of an operator in the following sense (see \cite[Corollary 3.10]{BBL} or \cite[Proposition 3.1.2]{Ha06}).

\begin{prop}\label{PrBBL}
Let $A\in \operatorname{Sect}(\omega)$, $\omega\in [0,\pi)$,
and let $q>0$ be such that $q\omega<\pi$.
Then $A^q\in \operatorname{Sect}(q\omega)$.
\end{prop}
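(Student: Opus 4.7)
The plan is to verify the three defining properties of $A^q\in\operatorname{Sect}(q\omega)$: closedness with dense domain, the spectral inclusion $\sigma(A^q)\subset\overline{\Sigma}_{q\omega}$, and the uniform resolvent estimate $\|z(z+A^q)^{-1}\|\le M'$ on $\Sigma_{\pi-\omega'}$ for each $\omega'\in(q\omega,\pi)$.

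The operator $A^q$ is defined via the extended holomorphic functional calculus applied to $\phi_q(\lambda)=\lambda^q$, which is holomorphic on $\Sigma_{\pi/q}\supset\Sigma_\omega$ and satisfies $\phi_q(\Sigma_\theta)\subset\Sigma_{q\theta}$ for $\theta\in[0,\pi/q)$. For an integer $n>q$ the regularised symbol $\tau^n\phi_q$, with $\tau(\lambda)=\lambda/(1+\lambda)^2$, decays at both $0$ and $\infty$ on $\Sigma_{\omega''}$ for any $\omega''\in(\omega,\pi/q)$, so $(\tau^n\phi_q)(A)$ is bounded and given by an absolutely convergent Cauchy integral along $\partial\Sigma_{\omega''}$; then $A^q:=\tau(A)^{-n}(\tau^n\phi_q)(A)$ is closed and densely defined on its maximal domain. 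The spectral inclusion $\sigma(A^q)\subset\overline{\Sigma}_{q\omega}$ follows from the spectral mapping theorem for the extended calculus together with the geometric fact $\phi_q(\overline{\Sigma}_\omega)\subset\overline{\Sigma}_{q\omega}$.

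For the resolvent estimate, fix $\omega'\in(q\omega,\pi)$, $\omega''\in(\omega,\omega'/q)$ and $z\in\Sigma_{\pi-\omega'}$. Inequality (\ref{ts1}) applied to $z\in\Sigma_{\pi-\omega'}$ and $\lambda^q\in\overline{\Sigma}_{q\omega''}$ (using $(\pi-\omega')+q\omega''<\pi$) yields
\[
|z+\lambda^q|\ge\sin\bigl((\omega'-q\omega'')/2\bigr)\,(|z|+|\lambda|^q),\qquad\lambda\in\partial\Sigma_{\omega''}.
\]
The approach is then to reduce to bounded sectorial operators by an approximation $A_\varepsilon$ of $A$ (with uniform sectoriality constants and strong resolvent convergence as $\varepsilon\to 0^+$), express $(z+A_\varepsilon^q)^{-1}$ as a Dunford--Riesz contour integral inside $\Sigma_{\omega''}$, and estimate the scalar integral using the above lower bound, $\|(\lambda-A_\varepsilon)^{-1}\|\le M/|\lambda|$, and the change of variables $t=|z|^{1/q}s$ on the rays $\lambda=te^{\pm i\omega''}$. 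The $|z|$-factors cancel in this substitution, producing a bound $\|z(z+A_\varepsilon^q)^{-1}\|\le M'$ uniform in $\varepsilon$ and in $z\in\Sigma_{\pi-\omega'}$; passing to the limit $\varepsilon\to 0^+$ gives the same bound for $A$.

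The principal obstacle is the careful passage from bounded approximants to the unbounded operator $A^q$, together with the algebraic bookkeeping of the regularisation $\tau^n$ required to define $A^q$ and to ensure that the intermediate functional-calculus identities match up; these technical points are routine within the extended sectorial functional calculus framework and are carried out in full detail in the proof of \cite[Proposition 3.1.2]{Ha06}.
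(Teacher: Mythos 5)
The paper itself does not prove Proposition~\ref{PrBBL}; it is stated as a known fact with references to \cite[Corollary 3.10]{BBL} and \cite[Proposition 3.1.2]{Ha06}, while the Appendix (Propositions~\ref{PrBBL1} and~\ref{PrBBL10}) supplies quantitative refinements for $q<1$ and $q>1$ together with complete proofs. Your plan follows the same general lines as Haase's proof, so citing \cite[Proposition 3.1.2]{Ha06} is legitimate; but the sketch you interpolate between the cited facts and the conclusion contains a genuine gap that you cannot waive away as ``routine.''

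The problem is in the resolvent estimate. You propose to bound
$\|z(z+A_\varepsilon^q)^{-1}\|$ by writing $(z+A_\varepsilon^q)^{-1}$ as a Dunford--Riesz integral over the rays $\partial\Sigma_{\omega''}$ and then using $\|(\lambda-A_\varepsilon)^{-1}\|\le M/|\lambda|$ together with \eqref{ts1}. This integral does not converge. Near $\lambda=0$ the scalar factor $(z+\lambda^q)^{-1}$ tends to the nonzero constant $z^{-1}$, while the resolvent bound contributes $|\lambda|^{-1}$, so the integrand on the rays behaves like $c/(|z|\,t)\,dt$ as $t\to0^+$, which is not integrable. The same divergence appears after the substitution $t=|z|^{1/q}s$. (Switching to a genuinely bounded and invertible approximant $A_\varepsilon$ does not rescue the estimate: either the contour becomes compact and depends on $\|A_\varepsilon\|$, $\|A_\varepsilon^{-1}\|$, both of which blow up as $\varepsilon\to 0$, or one retains the infinite contour and inherits the same singularity.) Precisely this divergence is why the paper's Appendix does not use a bare Cauchy integral for the resolvent. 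For $q>1$ Proposition~\ref{PrBBL10} first splits
\[
z(A^q+z)^{-1}=|z|^{1/q}(A+|z|^{1/q})^{-1}+f_{z,q}(A),\qquad
f_{z,q}(\lambda)=\frac{z\lambda-z^{1/q}\lambda^q}{(z+\lambda^q)(\lambda+|z|^{1/q})},
\]
so that $f_{z,q}$ vanishes at both $0$ and $\infty$ along $\partial\Sigma_{\omega''}$; only then is the Cauchy integral absolutely convergent and one gets the uniform bound. For $q\in(0,1)$ Proposition~\ref{PrBBL1} avoids the issue entirely by invoking Kato's closed-form resolvent formula for $A^q$. Your sketch is missing exactly this subtraction (or equivalent regularisation of the scalar symbol), which is the one nontrivial step in the argument rather than a bookkeeping detail.

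A secondary, more minor, point: you propose regularising $\lambda^q$ by $\tau^n$ with $\tau(\lambda)=\lambda/(1+\lambda)^2$, which requires $\tau(A)^n$ to be injective, i.e.\ $A$ injective. The proposition does not assume injectivity (nor does the paper's definition of sectoriality). Since $\lambda^q$ is already bounded near $0$ for $q>0$, the natural regulariser is $(1+\lambda)^{-n}$ with $n>q$, which is always invertible when applied to a sectorial $A$ and does the job.
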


In the Appendix (Section \ref{frac}) we shall give explicit estimates for the sectoriality constants of $A^q$ in terms of those of $A$.

Now we give an example to show that there are functions $f \in \mathcal{D}$ such that $f(A)$ cannot be defined by the extended Hille-Phillips calculus as described in \cite[Section 3.3]{GT}.   For $f(A)$ to be defined in that calculus for some sectorial operator $A$, there must exist a regulariser $e$ for $f$ in the sense of the Hille-Phillips calculus.  In particular, this requires that $e \in H^\infty(\C_+) \cap C(\overline{\C}_+)$, $e\ne0$ and $e\cdot f \in H^\infty(\C_+)$.

\begin{example} \label{nonhp}
Let $(r_k)_{k\ge1}$ be an enumeration of the rational numbers in $[1,2]$, let $a_1=1$ and 
\[
a_k = 2^{-k} \min_{j=1,\dots,k-1} |r_j-r_k|, \qquad  k=2,3,\dots.
\]
Define 
\[
f(z) = \sum_{k=1}^\infty a_k \frac{z}{z^2 + r_k^2}, \qquad z \in \C_+.
\]
By Example \ref{compsupp}, $f \in \mathcal{D}$.  Suppose that $e \in H^\infty(\C_+) \cap C(\overline{\C}_+)$ and $e \cdot f \in H^\infty(\C_+)$.  Take $j\ge1$.  For $z = s+ir_j$ where $s \in (0,1)$, we have
\[
\left|\sum_{k=j+1}^\infty a_k \frac{z}{z^2 + r_k^2} \right| \le \sum_{k=j+1}^\infty   \frac{|r_j-r_k|\sqrt5} {2^{k}|(z-ir_k)(z+ir_k)|}  \le \sum_{k=j+1}^\infty \frac{\sqrt5}{2^{k+1}} < 2^{-j+1}.
\]
Hence
\[
|f(s+ir_j)| \ge \left|\frac {a_j(s+ir_j)}{(s+ir_j)^2 + r_j^2}\right| - \left|\sum_{k=1}^{j-1} \frac{a_k(s+ir_j)}{(s+ir_j)^2 + r_k^2} \right| - 2^{-j+1} \to \infty
\]
as $s \to 0+$.  Since $e \cdot f$ is bounded on $\C_+$ and $e$ is continuous on $\overline{\C}_+$, it follows that $e(ir_j) = 0$ for each $j\ge 1$ and then $e(is)=0$ for all $s \in [1,2]$.  Since $e \in H^\infty(\C_+)$, it follows that $e=0$.  Thus $f$ is not regularisable in the Hille-Phillips functional calculus for any sectorial operator $A$.
\end{example}

Our first result shows that the answer to the question (Q5) is very closely related to boundedness of $H^\infty$-calculus on a sector.  It is well known that there are examples of sectorial operators (even of angle $0$ and on Hilbert space), which do not have bounded $H^\infty$-calculus on any sector (see \cite[Theorem 9.1.7, Corollary 9.1.8]{Ha06}, \cite[Theorem 3.6]{Fa15}, and Proposition \ref{limits} and Remark \ref{opsrem} below).  The following result shows that, for any such operator $A$, there are functions $f\in\NPp$ such that $-f(A)$ does not generate a $C_0$-semigroup.

\begin{thm} \label{ops}
Let $A$ be a sectorial operator of angle $\omega \in [0, \pi/2)$ with dense range, and let $\theta \in (\omega,\pi/2)$.  Consider the following properties:
\begin{enumerate}[{\rm(i)}]
\item \label{fci} $A$ has bounded $H^\infty$-calculus on $\Sigma_\theta$.
\item \label{fcii} For every $f \in \NPp$, $f(A)$ is a sectorial operator of angle (at most) $\theta$.
\item \label{fciii} For every $f \in \NPp$, $-f(A)$ is the generator of a bounded $C_0$-semigroup.
\item \label{fciv} For every $f \in \NPp$ such that $f(0+)$ and $f(\infty)$ both exist in $(0,\infty)$, $-f(A)$ is the generator of a $C_0$-semigroup.
\item \label{fcv} $A$ has bounded $H^\infty$-calculus on $\C_+$.
\end{enumerate}
Then
\[
\mbox{\rm(\ref{fci})}\implies\mbox{\rm(\ref{fcii})}\implies\mbox{\rm(\ref{fciii})}\iff\mbox{\rm(\ref{fciv})}\iff\mbox{\rm(\ref{fcv})}.
\]
If $X$ is a Hilbert space then all these properties are equivalent.
\end{thm}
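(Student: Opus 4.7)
The plan is to prove the cyclic chain $(\text{i})\Rightarrow(\text{ii})\Rightarrow(\text{iii})\Rightarrow(\text{iv})\Rightarrow(\text{v})\Rightarrow(\text{iii})$ on general Banach spaces, and then, on Hilbert space, close the equivalence via $(\text{v})\Rightarrow(\text{i})$ by appealing to McIntosh's square function theorem.

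For $(\text{i})\Rightarrow(\text{ii})$, I fix $f\in\NPp$ and $\theta'\in(\theta,\pi)$. Since $f(\Sigma_\theta)\subset\Sigma_\theta$ by Proposition \ref{pnpp}(\ref{pnpp1}), the auxiliary function $g_z(w):=z/(z+f(w))$ is holomorphic on $\Sigma_\theta$ for every $z\in\Sigma_{\pi-\theta'}$, and (\ref{Pest}) bounds it by $1/\cos((\pi-\theta'+\theta)/2)$ uniformly in $z$. The bounded $H^\infty(\Sigma_\theta)$-calculus then yields $\|z(z+f(A))^{-1}\|\le M_{\theta'}$ on $\Sigma_{\pi-\theta'}$; together with dense domain of $f(A)$ (valid because $A$ has dense range), this is exactly $f(A)\in\operatorname{Sect}(\theta)$. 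The implication $(\text{ii})\Rightarrow(\text{iii})$ is the standard characterisation of sectorial operators of angle $<\pi/2$ as negatives of generators of bounded holomorphic semigroups, and $(\text{iii})\Rightarrow(\text{iv})$ is immediate.

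For $(\text{v})\Rightarrow(\text{iii})$, I fix $f\in\NPp$ and $\lambda\in\C_+$; since $\operatorname{Re}f(w)\ge 0$ on $\C_+$, the functions $h_{n,\lambda}(w):=(\lambda/(\lambda+f(w)))^n$ belong to $H^\infty(\C_+)$ with $\|h_{n,\lambda}\|_\infty\le(|\lambda|/\operatorname{Re}\lambda)^n$. By the composition rule $h_{n,\lambda}(A)=\lambda^n(\lambda+f(A))^{-n}$, and the bounded $H^\infty(\C_+)$-calculus (with a single constant $C$ independent of $n$ and $\lambda$) yields $\|(\lambda+f(A))^{-n}\|\le C(\operatorname{Re}\lambda)^{-n}$, so $-f(A)$ generates a bounded $C_0$-semigroup by Hille--Yosida; equivalently, this follows from the half-plane calculus framework of \cite[Propositions 2.5 and 2.8]{BHM13}.

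The principal obstacle is $(\text{iv})\Rightarrow(\text{v})$. My strategy is to upgrade the pointwise generation hypothesis to a uniform half-plane resolvent estimate for $A$ itself and then identify this with bounded $H^\infty(\C_+)$-calculus via the framework of \cite[Propositions 2.5 and 2.8]{BHM13}. Concretely, I would probe (iv) with a parametric family of admissible test functions in $\NPp$ having the required boundary limits---for example the Möbius-type functions $f_{a,b}(w):=(a+w)/(1+bw)$ with $0<ab<1$ (so $f_{a,b}(0+)=a$ and $f_{a,b}(\infty)=1/b$), their fractional powers $f_{a,b}^\alpha$ with $\alpha\in(0,1)$, and finite positive combinations---extract resolvent estimates $\|(\mu+f(A))^{-1}\|\le K_f/(\operatorname{Re}\mu-\omega_f)$ from semigroup generation, and apply the uniform boundedness principle across the parametric family to promote pointwise bounds to a uniform bound. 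Letting parameters tend to limiting values should then yield a half-plane resolvent estimate sufficient for bounded calculus on $\C_+$. The hard step is ensuring that this probing family is rich enough---morally via Cauchy/Poisson representations of arbitrary $H^\infty(\C_+)$-functions in terms of such elementary building blocks---to transfer the uniform control back to all of $H^\infty(\C_+)$. On Hilbert space, McIntosh's theorem finally tells us that bounded $H^\infty(\Sigma_\phi)$-calculus for one $\phi\in(\omega,\pi)$ implies the same for every such $\phi$; applied with $\phi=\pi/2$ this gives $(\text{v})\Rightarrow(\text{i})$ and closes the equivalence of all five properties.
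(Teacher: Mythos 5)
Your chain structure and several implications are sound, but there is one genuine gap, and it is precisely at the only nontrivial implication, $(\text{iv})\Rightarrow(\text{v})$.

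Your sketch for $(\text{iv})\Rightarrow(\text{v})$ does not amount to a proof. You propose to probe $(\text{iv})$ with a parametric family of M\"obius-type $\NPp$-functions and their fractional powers, extract resolvent estimates from semigroup generation, and then upgrade via uniform boundedness to a half-plane resolvent bound sufficient for bounded $H^\infty(\C_+)$-calculus. The last step is the entire content of the implication, and you acknowledge you have not carried it out: generation of each $C_0$-semigroup $(e^{-tf(A)})_{t\ge0}$ (not necessarily bounded!) gives only a resolvent estimate in a right half-plane depending on $f$, and there is no obvious uniformity; nor does a Poisson/Cauchy representation of a general $H^\infty(\C_+)$-function in terms of such building blocks convert termwise operator bounds into a bound for the integral without some a priori uniform control. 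Moreover, bounded $H^\infty(\C_+)$-calculus is strictly stronger than a uniform half-plane resolvent estimate, so even if the uniform boundedness argument succeeded, a further step would be missing. The paper instead argues by contraposition, which is both shorter and complete: if $(\text{v})$ fails then by \cite[Proposition 5.3.4]{Ha06} there is $h\in H^\infty(\C_+)\cap C_0(\overline{\C}_+)$ with $\|h\|_\infty=1$ and $h(A)$ unbounded; symmetrising as $h_0(z)=\tfrac12(h(z)+\overline{h(\overline z)})$, $h_1(z)=\tfrac1{2i}(h(z)-\overline{h(\overline z)})$, one of $h_0(A)$, $h_1(A)$ is unbounded, say $h_i(A)$; then $g:=(h_i+2)/4$ belongs to $\NPp$ with $1/4\le|g|\le3/4$, so $f:=-\log g\in\NPp$ is bounded and bounded away from $0$, hence $f(0+)$ and $f(\infty)$ exist in $(0,\infty)$; and $\exp(-f(A))=g(A)$ is unbounded, so $-f(A)$ does not generate a $C_0$-semigroup by \cite[Proposition 2.5]{BHM13}, contradicting $(\text{iv})$. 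You should adopt an argument of this type rather than the probing strategy.

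Your remaining steps are essentially correct, though they diverge from the paper in detail. For $(\text{i})\Rightarrow(\text{ii})$ you apply the $H^\infty(\Sigma_\theta)$-bound directly to $z/(z+f(\cdot))$ with the estimate (\ref{Pest}); the paper instead reduces to $(\text{v})\Rightarrow(\text{iii})$ for $A^{1/\alpha}$ with $\alpha=2\theta/\pi$, via $g(z)=f(z^\alpha)^{1/\alpha}\in\NPp$ and $f(A)=g(A^{1/\alpha})^\alpha$. Both routes work; yours is more direct but should make explicit the functional-calculus step identifying $\big(z/(z+f)\big)(A)$ with $z(z+f(A))^{-1}$. For $(\text{v})\Rightarrow(\text{iii})$ you use powers of the resolvent and Hille--Yosida, whereas the paper applies the $H^\infty(\C_+)$-bound to $\exp(-tf)$ directly and invokes \cite[Proposition 2.5]{BHM13}; both are valid, though you should note that dense range of $A$ guarantees dense domain of $f(A)$ for Hille--Yosida. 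The Hilbert-space closure via McIntosh's theorem for $(\text{v})\Rightarrow(\text{i})$ matches the paper.
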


\begin{proof}  It is trivial that (ii) implies (iii) and (iii) implies (iv).  

Suppose for a contradiction that (iv) is true and (v) is false.  By \cite[Proposition 5.3.4]{Ha06} the functional calculus is unbounded on the subalgebra $\mathcal{A} := H^\infty(\C_+) \cap C_0(\overline{\C}_+)$ so there exists $h \in \mathcal{A}$ such that $\|h\|_{H^\infty(\C_+)} = 1$ and $h(A)$ is unbounded.  Let
\begin{equation}\label{1}
h_0(z):=\frac{h(z)+\overline{h(\overline{z})}}{2} \quad \text{and} \quad h_1(z):=\frac{h(z)-\overline{h(\overline{z})}}{2i}
\end{equation}
Then
\[
h=h_0 + ih_1,
\]
and
\[
h_i \in \mathcal{A}, \quad \|h_i\|_\infty\le1, \quad h_i((0,\infty))\subset \mathbb R, \quad h_i(z)=\overline{h_i(\overline{z})}, \qquad i=0,1.
\]
At least one of $h_0(A)$ and $h_1(A)$ is unbounded.  We choose $i$ so that $h_i(A)$ is unbounded, and we define
\[
g(z):= \frac{h_i(z)+2}{4}.
\]
Then $g\in \NPp$ and
\[
\frac{1}{4}\le |g(z)|\le \frac{3}{4}, \qquad z \in \mathbb C_+.
\]
Taking the branch of the logarithm which is real on $(0,\infty)$, we have
\[
f:=-\log g\in \NPp,
\]
and $f(0+)$ and $f(\infty)$ both exist in $[\log(4/3),\log4]$.  Moreover,
\[
g(z)=\exp{\left(-f(z)\right)}.
\]
Since $f$ maps $\Sigma_\theta$ to $\Sigma_\theta$ for $\theta \in\ (\omega,\pi/2)$, and  $f(A) \in \operatorname{Sect}(\pi/2)$ by assumption, the Composition Rule implies that
\[
\exp({-f(A)})=g(A)
\]
which is an unbounded operator.  Applying \cite[Proposition 2.5]{BHM13} shows that 
$-f(A)$ does not generate a $C_0$-semigroup on $X$ which contradicts our assumption.  This proves that (iv) implies (v).

Now suppose that (v) is true, and let $f\in\NPp$.   For $t\ge0$, the function $g_t(z) = \exp(-tf(z))$ belongs to $H^\infty(\C_+)$ with $\|g_t\|_{H^\infty(\C_+)} \le 1$.  By the Composition Rule, $g_t(A) = \exp(-tf(A))$.  By (v), $\exp(-tf(A)) \in \mathcal{L}(X)$ and $\|\exp(-tf(A))\| \le C$.  By \cite[Proposition 2.5]{BHM13}, $f(A)$ generates a bounded $C_0$-semigroup.   Thus (iii) is true.

Next, suppose that (i) holds, and let $\alpha = 2\theta/\pi < 1$.  By the Composition Rule, $A^{1/\alpha}$ has bounded $H^\infty$-calculus on $\C_+$.  Given $f \in \NPp$, define
$$
g(z) = f(z^\alpha)^{1/\alpha}, \quad z \in\C_+.
$$
Then $g \in \NPp$.  Applying the implication (\ref{fcv})$\implies$(\ref{fciii}) to $A^{1/\alpha}$ shows that $-g(A^{1/\alpha})$ generates a bounded $C_0$-semigroup, and is therefore sectorial of angle $\pi/2$.  Then
\[
f(A) = g(A^{1/\alpha})^\alpha
\]
is sectorial of angle $\alpha\pi/2 = \theta$.

When $X$ is a Hilbert space, the implication (\ref{fcv})$\implies$(\ref{fci}) holds by McIntosh's theorem \cite[Theorem 7.3.1]{Ha06}.
\end{proof}

For general Banach spaces, it is not true that (\ref{fcv}) implies (\ref{fci}) in Theorem \ref{ops}, due to a counterexample given in \cite{Kal}.   It is an open question whether (\ref{fcv}) implies (\ref{fcii}).

Now we turn to the question (Q1).  For a given Banach space $X$, let $\mathcal{F}_X$ be the set of all  functions $f \in \NPp$ with the property that $f(A) \in \operatorname{Sect}_X(\omega)$ whenever $\omega \in [0,\pi/2)$, $A \in \operatorname{Sect}_X(\omega)$ and $A$ has dense range in $X$.   First we show that $\mathcal{F}_X$ possesses some algebraic structure.

\begin{prop} \label{fsubx}
Let $X$ be a Banach space, and $f,g \in \mathcal{F}_X$.  Then the following functions are in $\mathcal{F}_X$:
\[
1/f, \qquad g \circ f,  \qquad f+g.
\]
\end{prop}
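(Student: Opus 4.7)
The plan is to handle each of the three operations in turn, relying on standard properties of the sectorial and half-plane holomorphic functional calculi as developed in \cite{Ha06, BHM13}.

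For $1/f$, I would use the functional calculus identity $(1/f)(A) = f(A)^{-1}$. This is well-defined because $f(A)$ is injective: as noted in the discussion preceding Theorem \ref{ops}, $f(A)$ inherits dense range from $A$ via the identity $f(A)(1/f)(A)x = x$ on $\dom((1/f)(A))$. Since $f \in \mathcal{F}_X$ yields $f(A) \in \operatorname{Sect}(\omega)$, and the inverse of a sectorial operator of angle $\omega$ with dense range is again in $\operatorname{Sect}(\omega)$, the conclusion $(1/f)(A) \in \operatorname{Sect}(\omega)$ is immediate.

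For $g \circ f$, my plan is to apply the Composition Rule \cite[Theorem 2.4.2]{Ha06}, which gives $(g \circ f)(A) = g(f(A))$ since, by Proposition \ref{pnpp}(\ref{pnpp1}), $f$ maps each $\Sigma_\theta$ with $\theta \in (0,\pi/2)$ into itself. Because $f(A) \in \operatorname{Sect}(\omega)$ has dense range, applying the hypothesis $g \in \mathcal{F}_X$ with $f(A)$ in place of $A$ gives $g(f(A)) \in \operatorname{Sect}(\omega)$, so $g \circ f \in \mathcal{F}_X$.

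The sum $f+g$ is the most substantive case, and I would prove it via the semigroup interpretation of sectoriality. Since $\omega < \pi/2$ and $f, g \in \mathcal{F}_X$, both $-f(A)$ and $-g(A)$ generate sectorially bounded holomorphic $C_0$-semigroups of angle $\pi/2 - \omega$. Fix $\theta \in (\omega,\pi/2)$; for $s \in \Sigma_\alpha$ with $\alpha < \pi/2 - \omega$, the functions $z \mapsto e^{-sf(z)}$ and $z \mapsto e^{-sg(z)}$ are bounded and holomorphic on $\Sigma_\theta$, uniformly for $s \in \Sigma_\alpha$, and multiplicativity of the calculus yields
\[
e^{-sf(A)} \cdot e^{-sg(A)} = e^{-s(f+g)(A)}.
\]
Uniform boundedness of each factor on $\Sigma_\alpha$ transfers to the product, and strong continuity at $s=0$ of this product $C_0$-semigroup follows from the strong continuity and uniform boundedness of the two commuting factor semigroups. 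Invoking the characterisation in \cite[Propositions 2.5 and 2.8]{BHM13} then identifies $-(f+g)(A)$ as the generator of this sectorially bounded holomorphic semigroup of angle $\pi/2 - \omega$, hence $(f+g)(A) \in \operatorname{Sect}(\omega)$, and $f+g \in \mathcal{F}_X$. The point requiring most care is the multiplicative identity above together with the identification of the generator of the product semigroup with $-(f+g)(A)$; both reduce to multiplicativity of the holomorphic calculus on a common sector and to the compatibility of the half-plane and sectorial calculi \cite[Proposition 2.8]{BHM13}, so no new resolvent or sectoriality estimates are needed.
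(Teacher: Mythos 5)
Your treatment of $1/f$ and $g\circ f$ agrees with the paper's (the paper phrases the $1/f$ case as composition with $1/z\in\mathcal F_X$, but the content is the same). For $f+g$ both arguments start by forming the commuting product semigroup $(S(t)T(t))_{t\ge0}$, but the identification of its generator with $-(f+g)(A)$ is done differently. The paper shows that the generator $B$ is the closure of $f(A)+g(A)$, noting that $\dom(f(A))\cap\dom(g(A))$ is a dense, semigroup-invariant core; it then uses \cite[Theorem 1.3.2c) and f)]{Ha06} to get $B\subset(f+g)(A)$, and concludes $B=(f+g)(A)$ from surjectivity of $I+B$ and injectivity of $I+(f+g)(A)$. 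You instead rewrite $S(s)T(s)=(e^{-s(f+g)})(A)$ via multiplicativity of the $H^\infty$-calculus and then cite \cite[Propositions 2.5 and 2.8]{BHM13}. That route works, but it leans on those propositions more than your closing remark acknowledges: the sectoriality and resolvent behaviour of $(f+g)(A)$ are exactly what is being proved, so applying them to $(f+g)(A)$ is not a simple compatibility statement. The point those results encapsulate is that, because the product is a bounded $C_0$-semigroup, its resolvent for $\lambda>0$ is
\[
\int_0^\infty e^{-\lambda s}\,(e^{-s(f+g)})(A)\,ds \;=\; \bigl((\lambda+f+g)^{-1}\bigr)(A) \;=\; (\lambda+(f+g)(A))^{-1},
\]
where the last equality uses that $\bigl((\lambda+f+g)^{-1}\bigr)(A)$ is bounded (being a resolvent) together with \cite[Proposition 1.2.2]{Ha06}. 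The paper's closure argument stays entirely within the sectorial calculus of \cite{Ha06} and avoids any appeal to the half-plane machinery; yours is shorter but imports \cite{BHM13}, and to be fully rigorous should include the resolvent identity above (or the paper's closure/injectivity argument) rather than leaving the generator identification to the citation.
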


\begin{proof}
Let $A \in \operatorname{Sect}_X(\omega)$ with dense range, where $\omega \in (0,\pi/2)$.  Then $f(A) \in \operatorname{Sect}_X(\omega)$ with dense range, and then $g(f(A)) \in \operatorname{Sect}_X(\omega)$.  Now $(g \circ f)(A) = g(f(A))$, by the Composition Rule.  Thus $g \circ f \in \mathcal{F}_X$.  Since $1/z \in \mathcal{F}_X$, it follows that $1/f \in \mathcal{F}_X$.

Furthermore, $-f(A)$ and $-g(A)$ are the generators of bounded holomorphic $C_0$-semigroups $(S(t))_{t\ge0}$ and $(T(t))_{t\ge0}$, respectively, of angle $\pi/2-\omega$.  These semigroups commute because the resolvents $(s + f)(A)^{-1}$ and $(u+g)(A)^{-1}$ of their generators commute for $s,u>0$ (see \cite[Theorem 1.3.2a)]{Ha06}).  Hence $\left(S(t)T(t)\right)_{t\ge0}$ is a bounded holomorphic $C_0$-semigroup of angle $\pi/2-\omega$ and its generator $B$ is a closed extension of $f(A) + g(A)$  whose domain $\dom(f(A)) \cap \dom (g(A))$ is dense and invariant under the semigroup.  Then $B$ is the closure of $f(A) + g(A)$.  By \cite[Proposition 1.3.2c)]{Ha06}, $(f+g)(A)$ is a closed extension of $f(A) + g(A)$ and hence an extension of $B$.  Since $I +B$ is surjective and $I + (f+g)(A)$ is injective by \cite[Proposition 1.3.2f)]{Ha06}, it follows that $B = (f+g)(A)$.  Thus $f+g \in \mathcal{F}_X$.
\end{proof}

\begin{remark}
\begin{enumerate}[\rm1.]
\item  In the context of Proposition \ref{fsubx} we do not know whether $(fg)^{1/2}$ is necessarily in $\mathcal{F}_X$, or equivalently whether $(fg)(A) \in \operatorname{Sect}_X(2\omega)$.  See Remark \ref{rempr}.
\item  Instead of $\mathcal{F}_X$, one might consider the class of $\NPp$-functions preserving sectoriality for all injective operators $A \in \operatorname{Sect}(\omega)$ on $X$ (without requiring dense range).  This class is closed under compositions and sums, but it does not include $1/z$.
\item  One might consider the set $\mathcal{F}$ of all functions $f$ which belong to $\mathcal{F}_X$ for all Banach spaces $X$.  Note that $\mathcal{F}$ is a set because it suffices to consider separable Banach spaces, and therefore it suffices to consider closed subspaces of $\ell^\infty$.
\end{enumerate}
\end{remark}

Next we show that, if $X$ has a conditional basis and $f \in \mathcal{F}_X$, then the limiting values $f(\infty)$ and $f(0+)$ must exist in $[0,\infty]$.

\begin{prop} \label{limits}
Let $f \in \NPp$ be a function such that $f(\infty)$ does not exist in $[0,\infty]$, and let $X$ be a Banach space with a conditional basis $(e_n)_{n\ge1}$.  Then there exists an operator $A \in \operatorname{Sect}_X(0)$, with dense range, such that $-f(A)$ does not generate a $C_0$-semigroup.
\end{prop}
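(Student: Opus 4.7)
The plan is to build $A$ as a diagonal operator with respect to the conditional basis $(e_n)$, choosing eigenvalues $\lambda_n\to\infty$ so that the sequence $(e^{-f(\lambda_n)})_n$, which would have to be the diagonal multiplier of $e^{-f(A)}$ at time $1$, is bounded but fails to extend to a bounded operator on $X$. Since $f(\infty)$ does not exist and $f>0$ on $(0,\infty)$, we have $\alpha:=\liminf_{t\to\infty}f(t)<\beta:=\limsup_{t\to\infty}f(t)$, and we can pick $0<p<q<\infty$ strictly between $\alpha$ and $\beta$. By the intermediate value theorem, $f^{-1}(\{y\})\cap(N,\infty)\neq\emptyset$ for every $y\in(p,q)$ and every $N>0$.

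Conditionality of $(e_n)$ means that the linear map $\nu\mapsto D_\nu$, defined by $D_\nu\sum c_n e_n=\sum\nu_n c_n e_n$, cannot send $\ell^\infty$ boundedly into $\mathcal{L}(X)$; since its graph is closed (operator norm convergence $D_{\nu^{(k)}}\to T$ implies $Te_n=\nu_n e_n$), the closed graph theorem yields some $\nu\in\ell^\infty$, which we may take real with $\|\nu\|_\infty\le 1$, such that $D_\nu$ is unbounded on $\operatorname{span}\{e_n\}$. The affine shift $\mu_n:=a\nu_n+b$, with $a=(e^{-p}-e^{-q})/2>0$ and $b=(e^{-p}+e^{-q})/2$, lies in $[e^{-q},e^{-p}]$ and still gives $D_\mu=aD_\nu+bI$ unbounded. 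Setting $y_n:=-\log\mu_n\in[p,q]$, I choose $\lambda_n\in f^{-1}(\{y_n\})\cap(2\lambda_{n-1},\infty)$ inductively, so that $\lambda_n\to\infty$.

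Now define $A$ as the closure of the diagonal operator $Ae_n=\lambda_n e_n$. A standard Abel summation argument, using $|\lambda_n+z|\ge\sin(\omega'/2)(\lambda_n+|z|)$ for $z\in\Sigma_{\pi-\omega'}$ and the telescoping identity $\sum_n(\lambda_{n+1}-\lambda_n)/((|z|+\lambda_n)(|z|+\lambda_{n+1}))=(|z|+\lambda_1)^{-1}$, bounds the bounded variation norm of $((\lambda_n+z)^{-1})_n$ by $C(\omega')/(\lambda_1+|z|)$, so that $\|z(z+A)^{-1}\|\le K'(\omega')$ uniformly on $\Sigma_{\pi-\omega'}$. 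Hence $A\in\operatorname{Sect}_X(0)$, and $\ran A$ is dense since $e_n=A(\lambda_n^{-1}e_n)$. Using the regulariser $\tau(z)=z/(1+z)^2$ and the Cauchy integral formula for the bounded operator $(f\tau^2)(A)$, which acts diagonally on the eigenvectors, one obtains $f(A)e_n=y_n e_n$ for every $n$.

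If $-f(A)$ generated a $C_0$-semigroup $(T(t))_{t\ge0}$, then since each $e_n$ is an eigenvector of $f(A)$ with eigenvalue $y_n$, uniqueness of solutions to the abstract Cauchy problem forces $T(t)e_n=e^{-ty_n}e_n$. At $t=1$ this gives $T(1)e_n=\mu_n e_n$, so the bounded operator $T(1)$ agrees with the unbounded $D_\mu$ on the dense subspace $\operatorname{span}\{e_n\}$, a contradiction. The main obstacle is the conversion of the generic $[-1,1]$-pathology supplied by conditionality into a bad multiplier supported in the \emph{specific} sub-interval $[e^{-q},e^{-p}]\subset(0,1)$ dictated by $f$; this is overcome by the affine rescaling (which preserves unboundedness because $a\neq 0$) combined with the unboundedness of the level sets $f^{-1}(\{y\})$, which allows the prescribed bad multiplier to be realised as $(e^{-f(\lambda_n)})_n$ for some $\lambda_n\to\infty$.
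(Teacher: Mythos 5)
Your proof is correct and follows essentially the same strategy as the paper's: a diagonal operator with respect to the conditional basis, with eigenvalues tending to infinity chosen via the intermediate value theorem so that $e^{-f(\lambda_n)}$ realises an unbounded diagonal multiplier. The only differences are cosmetic — the paper extracts a $\pm1$ sign multiplier directly from conditionality and cites \cite[Lemma 9.1.2]{Ha06} for sectoriality and the Composition Rule plus \cite[Proposition 2.5]{BHM13} for the final contradiction, whereas you obtain a general $[-1,1]$-valued bad multiplier via the closed graph theorem, reprove sectoriality by Abel summation, and conclude via uniqueness of solutions of the abstract Cauchy problem.
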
 

\begin{proof}
Since the basis is conditional, there exists a sequence $(b_n)_{n\ge 1}$, where $b_n=\pm 1$, such that the multiplication operator
$B$ with 
\[
Be_n= b_n, \qquad n\ge1,
\]
is not bounded.  The operator $B$ is then given by
\begin{align*}
\operatorname{dom}(B) &= \Big\{ {\textstyle{ x = \sum_{n=1}^\infty x_n e_n : \text{$\sum_{n=1}^\infty b_n x_n e_n$ converges in $X$}}} \Big\},  \\
 Bx &= {\textstyle{\sum_{n=1}^\infty b_n x_n e_n}}.
\end{align*}

Let $g(z) = \exp(-f(z))$, and
\[
a_1:=\underline{\lim}_{t\to \infty}\,g(t), \qquad
a_2:=\overline{\lim}_{t\to \infty}\,g(t).
\]
so $0 \le a_1 < a_2 \le 1$ by assumption.  Choose $c_1,c_2$ such that $a_1 < 2c_1 < 2c_2 < a_2$.
By the intermediate value theorem there is a strictly increasing sequence $(t_n)$ in $[1,\infty)$, such that 
 $t_n\to\infty$ as $n\to\infty$ and
\[
g(t_n)= \begin{cases} 2c_1 &\text{if $b_n = -1$}, \\ 2c_2 &\text{if $b_n = 1$}. \end{cases}
\]
Let $A$ be the multiplication operator on X with
\[
Ae_n=t_ne_n, \qquad n \ge 1.
\]
Then $A \in \operatorname{Sect}_X(0)$ \cite[Lemma 9.1.2]{Ha06}, and it is clear that $A$ has dense range.  Moreover $g(A)$ is the multiplication operator with
\[
g(A)e_n = g(t_n)e_n = \left( (c_1+c_2) + (c_2-c_1)b_n\right) e_n,
\]
so $g(A) = (c_1+c_2) + (c_2-c_1)B$, which is unbounded.  As in the proof of Theorem \ref{ops}, it follows that $\exp(-f(A))$ is unbounded, and $-f(A)$ does not generate a $C_0$-semigroup.
\end{proof}

\begin{remark} \label{opsrem}
The proof of Proposition \ref{limits} shows that the operator $A$ constructed therein does not have bounded $H^\infty$-calculus on $\C_+$, and the argument can easily be modified to show that $A$ does not have bounded $H^\infty$-calculus on $\Sigma_\theta$ for any $\theta \in (0,\pi)$. The fact that a multiplication operator with respect to a conditional basis may not have bounded $H^\infty$-calculus is well known, going back at least to \cite{BC}, but Carleson's interpolation theorem is usually invoked to construct the function $g$ such that $g(A)$ is unbounded.
\end{remark}

\begin{cor} \label{limit0}
Let $f \in \NPp$ be a function such that $f(0+)$ does not exist in $[0,\infty]$, and let $X$ be a Banach space with a conditional basis $(e_n)_{n\ge1}$.  Then there exists an operator $A_0 \in \operatorname{Sect}_X(0)$ such that $-f(A_0)$ does not generate a $C_0$-semigroup.
\end{cor}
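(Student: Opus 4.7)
The plan is to reduce this corollary to Proposition \ref{limits} by a reciprocal substitution. I will introduce the function $g \in \NPp$ defined by $g(z) := f(1/z)$ for $z \in \C_+$, which lies in $\NPp$ because $\NPp$ is closed under reciprocals and composition. The key observation is that
\[
g(\infty) = \lim_{t \to \infty} f(1/t) = f(0+),
\]
which by hypothesis does not exist in $[0,\infty]$. Thus $g$ satisfies the hypothesis of Proposition \ref{limits}.

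Next, applying Proposition \ref{limits} to $g$ on the space $X$, I obtain an operator $B \in \operatorname{Sect}_X(0)$ with dense range such that $-g(B)$ does not generate a $C_0$-semigroup. Since $B$ has dense range, $B$ is injective (as recorded earlier in the paper), and so the inverse $A_0 := B^{-1}$ is well-defined, densely defined, and belongs to $\operatorname{Sect}_X(0)$ (the angle is preserved under inversion of sectorial operators with dense range).

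Finally, I need the identity $f(A_0) = g(B)$. This is the composition-rule style statement that $f(B^{-1}) = (f \circ (1/\cdot))(B) = g(B)$, which is a standard consequence of the extended holomorphic functional calculus applied to the homeomorphism $z \mapsto 1/z$ of $\Sigma_\theta$ onto itself (see, e.g.\ the standard rules in \cite[Ch.~2]{Ha06}). Once this identity is in hand, the conclusion is immediate: if $-f(A_0)$ generated a $C_0$-semigroup, then so would $-g(B)$, contradicting the choice of $B$.

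The only nontrivial point is the identification $f(B^{-1}) = g(B)$, and this is essentially routine within the holomorphic functional calculus; no new argument is needed. I do not expect any genuine obstacle here — the content of the corollary is really the reduction to Proposition \ref{limits}, and the two auxiliary facts (that $z \mapsto 1/z$ preserves $\NPp$ and commutes appropriately with the functional calculus via inversion of the operator) are standard.
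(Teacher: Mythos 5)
Your proposal is correct and is essentially identical to the paper's own (one-line) proof: the paper also applies Proposition \ref{limits} to $f(1/z)$ and takes $A_0 = A^{-1}$, relying implicitly on the same composition-rule identification $f(A^{-1}) = g(A)$ that you spell out. No gaps.
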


\begin{proof}  One may apply Proposition \ref{limits} to $f(1/z)$ and obtain that $-f(A^{-1})$ does not generate a $C_0$-semigroup.
\end{proof}

In the remainder of this section, we give positive results showing that certain functions $f$  have the property that $f(A)$ is sectorial either for all sectorial $A$, or for all injective sectorial $A$, with sectorial angle either in $[0,\pi/2)$ or in $[0,\pi)$.  First we obtain an integral representation for the resolvent of $f(A)$ matching the scalar versions in Lemma \ref{In2}.  The following theorem describes two cases where this can be achieved.  When $A$ is injective, the representation holds for all functions $f \in \NPp$ such that $J_\theta(r;f)$ is finite for all $\theta\in (0,\pi/2)$ and some (or all) $r>0$.   However, we restrict attention to the class $\mathcal{E}$ which is the appropriate class for the later corollaries.

\begin{thm}\label{TCbf11}
Let $A\in {\rm Sect}(\omega)$ for some $\omega\in [0,\pi/2)$,  let $q\in \left(2,\pi/\omega\right)$ and $\psi =\pi\left(1-\frac{1}{q}\right)$.  Assume that either
\begin{enumerate} [\rm \;a)]
\item \label{TCbfii} $A$ is injective and $f \in \E$, or
\item \label{TCbfi} $f \in \mathcal{BF}$.
\end{enumerate}
Let $z \in \Sigma_\psi$.  Then $z+f(A)$ is invertible and
\begin{align}  \label{Ra+}
 \lefteqn{(z + f(A))^{-1}} \\
 &= (z+f(\infty))^{-1}I  +\frac{q}{\pi}\int_0^\infty 
\frac{\operatorname{Im} f(te^{i\pi/q})\,t^{q-1}}
{(z+f(te^{i\pi/q}))(z+f(te^{-i\pi/q}))} \,(A^q+t^q)^{-1}\,dt \nonumber\\
 &= (z+f(0+))^{-1}I  -\frac{q}{\pi}\int_0^\infty
\frac{\operatorname{Im} f(te^{i\pi/q})}
{(z+f(te^{i\pi/q}))(z+f(te^{-i\pi/q}))t} \,A^q (A^q+t^q)^{-1}\,dt.  \nonumber
\end{align}
Here the integrals are absolutely convergent.
\end{thm}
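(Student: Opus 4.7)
The strategy is to transfer the scalar resolvent identities (\ref{R11+}) and (\ref{R11++}) from Lemma \ref{In2} to the operator level by substituting the sectorial operator $A$ for the scalar $\lambda$. Let $R(z)$ denote the operator defined by the Bochner integral appearing on the right-hand side of (\ref{Ra+}). I would split the argument into two stages: first showing that $R(z)$ is a well-defined bounded operator on $X$, and then identifying it with $(z+f(A))^{-1}$.

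\textbf{Step 1: boundedness and absolute convergence.} Since $q\in(2,\pi/\omega)$ we have $q\omega<\pi$, so by Proposition \ref{PrBBL} the operator $A^q$ is sectorial of angle $q\omega$. Consequently $(A^q+t^q)^{-1}$ is bounded for every $t>0$, with $\|(A^q+t^q)^{-1}\|\le M/t^q$ for some $M$ depending only on $A$ and $q$. Combined with the scalar estimate (\ref{integ}), which in turn relies on (\ref{Pest}), this bounds the norm of the operator-valued integrand in the first formula of (\ref{Ra+}) by a constant multiple of
\[
\frac{|\operatorname{Im} f(te^{i\pi/q})|}{(|z|+f(t))^2\,t}.
\]
Since $f\in\mathcal{E}$ either by assumption (case (a)) or via Proposition \ref{cong}(\ref{congb}) giving $\mathcal{BF}\subset\mathcal{E}$ (case (b)), the bound (\ref{tilde}) yields $J_{\pi/q}(|z|;f) \le \kappa_{\pi/q}(f)/|z|$. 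Hence $R(z)$ is well-defined as a Bochner integral and satisfies $\|R(z)\|\le C/|z|$ for an appropriate constant $C$ depending on $A$, $f$ and $\psi$.

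\textbf{Step 2: identification as the resolvent of $f(A)$.} For $z\in\Sigma_\psi$ fixed, consider the scalar function $G_z(\lambda):=(z+f(\lambda))^{-1}$. By (\ref{Pest}), $G_z$ belongs to $H^\infty(\Sigma_\theta)$ for any $\theta\in(\omega,\pi/q)$, with $\|G_z\|_{H^\infty(\Sigma_\theta)}\le C'/|z|$. I would show that $R(z)=G_z(A)$ in the sense of the extended sectorial calculus: apply the Cauchy-integral definition of the calculus to the regularised function $G_z\tau^2$, substitute (\ref{R11+}) for $G_z$ inside the integrand, and interchange the Cauchy contour integral with the $t$-integral via Fubini (justified by the bounds from Step~1). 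The inner contour integrals collapse to $\tau^2(A)(A^q+t^q)^{-1}$, giving $(G_z\tau^2)(A) = \tau^2(A) R(z)$; injectivity of $\tau^2(A)$ in case (a) then yields $G_z(A)=R(z)$. Finally, the pointwise identity $G_z(\lambda)\cdot(z+f(\lambda))=1$ together with multiplicativity of the functional calculus implies $(z+f(A))R(z)=I$ on $X$ and $R(z)(z+f(A))=I$ on $\dom(f(A))$, so that $z+f(A)$ is invertible and $(z+f(A))^{-1}=R(z)$. The second formula in (\ref{Ra+}) follows either from the analogous scalar identity (\ref{R11++}) or directly from the first by inserting $I = t^q(A^q+t^q)^{-1}+A^q(A^q+t^q)^{-1}$.

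\textbf{Main obstacle.} The principal subtlety lies in case (b) when $A$ is not assumed injective, since $f(A)$ must then be defined via the Phillips formula (\ref{bpfc}) rather than the extended holomorphic calculus, and the multiplicative identification $G_z(A)=R(z)$ above is cleanest when $A$ is injective. To handle this case I would either restrict $A$ to the invariant subspace $\overline{\ran(A)}$ on which it becomes injective (with $f(A)$ reducing correspondingly) and extend to the full space using the decomposition of $X$ induced by $A$, or verify the identity $(z+f(A))R(z)=I$ directly on a core for $f(A)$ by computing $f(A)R(z)x$ through (\ref{bpfc}) and exploiting the commutation of $f(A)$ with the resolvents $(A^q+t^q)^{-1}$. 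A secondary technical point is the careful Fubini justification for the iterated integral in Step 2, which demands uniform integrability along the contour $\Gamma$ and over $t\in(0,\infty)$ simultaneously.
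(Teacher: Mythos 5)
Your Step 1 and Step 2 reproduce the paper's argument for case a) essentially verbatim: the same bound $\|(A^q+t^q)^{-1}\|\le C_q t^{-q}$ from Proposition \ref{PrBBL} combined with (\ref{integ}) and (\ref{tilde}) for absolute convergence, and the same regularisation by $\tau^2$, substitution of the scalar identity from Lemma \ref{In2} into the Cauchy integral, Fubini, cancellation of the injective factor $\tau^2(A)$, and the product rule to identify $R(z)$ with $(z+f(A))^{-1}$. (The paper first reduces to $z=s>0$ by analytic continuation, but that is a cosmetic difference.) So for injective $A$ the proposal is correct and is the paper's proof.

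The gap is exactly where you locate it, in case b) with $A$ non-injective, and neither of your two proposed fixes closes it as stated. The decomposition route fails because for a sectorial operator on a general Banach space one does not have $X=\ker(A)\oplus\overline{\ran(A)}$; that splitting is available on reflexive spaces but not in general, so restricting to $\overline{\ran(A)}$ loses part of $X$. The direct route via (\ref{bpfc}) is also incomplete: pushing $f(A)$ through the Bochner integral using commutation with $(A^q+t^q)^{-1}$ only yields $f(A)R(z)x=R(z)f(A)x$, i.e. $(z+f(A))R(z)x=R(z)(z+f(A))x$, which is commutation and not the identity $=x$. To get $=x$ one must transfer the \emph{scalar} resolvent identity of Lemma \ref{In2} to the operator level, and the only mechanism you have for that transfer is the $\tau^2$-regularised contour argument, which is blocked precisely by the non-injectivity of $\tau^2(A)$. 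The paper's device is different: it replaces $A$ by the invertible (hence injective) operators $A_\varepsilon=A+\varepsilon$, which are uniformly sectorial, applies case a) to get $(z+f(A_\varepsilon))^{-1}=R_q(z;f,A_\varepsilon)$, shows $R_q(z;f,A_\varepsilon)x\to R_q(z;f,A)x$ strongly, and then uses the L\'evy--Khintchine representation (\ref{bpfc}) to estimate
\[
f(A)R_q(z;f,A_\varepsilon)x-x=\bigl(f(A)-f(A_\varepsilon)\bigr)R_q(z;f,A_\varepsilon)x
=\int_{0+}^\infty (1-e^{-\varepsilon t})T(t)R_q(z;f,A_\varepsilon)x\,\mu(dt)-\varepsilon b\,R_q(z;f,A_\varepsilon)x\to0
\]
for $x\in\dom(A)$, concluding by closedness of $f(A)$ and the fact that $\dom(A)$ is a core. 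This is the step your proposal is missing; without it, case b) is not established.
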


\begin{proof}  We consider only the representation in the second line of (\ref{Ra+}) which we denote by $R_q(z; f,A)$. The other representation is similar.   Let $\theta \in (0,\psi)$ and $f \in \E$.  Proposition \ref{PrBBL} shows that $\|(A^q+t^q)^{-1}\| \le C_qt^{-q}$ for some constant $C_q$. Then the estimate (\ref{integ}) with $\lambda$ replaced by $A$, and Definition \ref{defcong}, imply that the integral in $R_q(z; f,A)$ converges uniformly for $z \in \Sigma_\theta$.  Since the integrand is holomorphic in $z$, it follows that $R_q(\cdot; f,A)$ is a holomorphic function from $\Sigma_\psi$ to $\mathcal{L}(X)$. By analytic continuation (see \cite[Proposition B.5]{ABHN01}), it suffices to prove (\ref{Ra+}) only for $z=s>0$.  

Suppose first that $A$ is injective and $f \in \E$. Then  $\tau(\lambda) = \lambda/(1+\lambda)^2$ is a regulariser for $(s+f(\lambda))^{-1}$ for any $s>0$. Thus, 
using the holomorphic functional calculus and taking $\omega'\in (\omega,\pi/2)$, we can write 
\begin{align}\label{SSR+}
(s+f(\lambda))^{-1}(A) &=[A(1+A)^{-2}]^{-1}
\left(\frac{\lambda}{(\lambda+1)^2(s+f(\lambda))}\right)(A) \\
&=\frac{1}{2\pi i} [A(1+A)^{-2}]^{-1} \int_{\partial \Sigma_{\omega'}} \frac{\lambda}{(\lambda+1)^2}\frac{(A-\lambda)^{-1}}{(s+f(\lambda))}\,d\lambda,  \nonumber\\
A(A+1)^{-2}(A^q+t^q)^{-k} &= \frac{1}{2\pi i}\int_{\partial \Sigma_{\omega'}}
\frac{\lambda (A-\lambda)^{-1}}{(\lambda+1)^2}\frac{d\lambda}
{(\lambda^q+t^q)^k}, \qquad t>0,\; k=0,1. \nonumber
\end{align}
Since $f\in\E$, we can use Lemma \ref{In2} and Fubini's theorem to obtain
\begin{eqnarray*}
\lefteqn{\left(\frac{\lambda}{(\lambda+1)^2(s+f(\lambda))}\right)(A)} \\
 &=& \frac{1}{2\pi i(s+f(\infty))}\int_{\partial \Sigma_{\omega'}}\frac{\lambda}{(\lambda+1)^2}(A-\lambda)^{-1}\,d\lambda  \\
&&+\frac{q}{2\pi^2 i}\int_0^\infty
\frac{\operatorname{Im}f(te^{i\pi/q})\,t^{q-1}}
{(s+f(te^{i\pi/q}))(s+f(te^{-i\pi/q}))}
\int_{\partial \Sigma_{\omega'}}
\frac{\lambda (A-\lambda)^{-1}}{(\lambda+1)^2}\frac{d\lambda}
{(\lambda^q+t^q)}\,dt  \\
&=& \frac{A(A+1)^{-2}}{s+f(\infty)}
+\frac{q}{\pi}A(A+1)^{-2}\int_0^\infty
\frac{\operatorname{Im}f(te^{i\pi/q})\,t^{q-1}}
{(s+f(te^{i\pi/q}))(s+f(te^{-i\pi/q}))}
(A^q+t^q)^{-1}\,dt \\
&=& A(A+1)^{-2}R_q(s;f,A).
\end{eqnarray*}
Therefore by (\ref{SSR+}), 
\[
(s+f(\lambda))^{-1}(A)=R_q(s;f,A),\quad s>0.
\]
Then, by the product rule for the holomorphic functional calculus \cite[Theorem 1.3.2c)]{Ha06},
\[
R_q(s;f, A)(f(A)+s)\subset I.
\]
This means that $R_q(s;f,A)=(f(A)+s)^{-1}$,
so  (\ref{Ra+}) holds for $s>0$, and then for all 
$z\in \Sigma_\beta$.  This completes the proof for case \ref{TCbfii}), and for case \ref{TCbfi}) when $A$ is injective.

Now we consider the case when $A$ is not injective, and $f \in \mathcal{BF}$.  Let
\[
A_\varepsilon:=A+\varepsilon,\quad \varepsilon\in (0,1].
\]
The operators $A_\varepsilon, \;\varepsilon\in (0,1]$, are uniformly sectorial in the sense that
\[
\sup  \left\{\|z(A_\varepsilon+z)^{-1}\| : \varepsilon \in (0,1], z \in\Sigma_{\pi-\omega'} \right\} < \infty, \qquad \omega' \in (\omega,\pi)
\]
(see (\ref{MEs0})).  By the case above, we have
\begin{equation}\label{Reps+}
(z+f(A_\varepsilon))^{-1}=R_q(z;f,A_\varepsilon),\quad z\in \Sigma_\psi.
\end{equation}
By \cite[Proposition 3.1.9]{Ha06}, we also have
\[
\lim_{\varepsilon\to 0+}\,[A^q-(A_\varepsilon)^q]x=0,\quad x\in \dom(A^q)=\dom((A_\varepsilon)^q).
\]
Then, using
\[
(A_\varepsilon^q+t^q)^{-1}-(A^q+t^q)^{-1}
=(A_\varepsilon^q+t^q)^{-1}
[A^q-(A_\varepsilon)^q](A^q+t^q)^{-1},
\]
we infer that
\[
\lim_{\varepsilon\to 0+}\,\|(A_\varepsilon^q+t^q)^{-1}x-(A^q+t^q)^{-1}x\|=0, \qquad x \in X.
\]
Thus, letting $\varepsilon \to 0+$ in (\ref{Reps+}) and using the bounded convergence theorem,
we get that
\[
\lim_{\varepsilon\to 0+} R_q(z;f,A_\varepsilon)x = R_q(z;f,A)x.
\]

Now, let $x \in \dom(A)$.  Since $R_q(z;f,A_\varepsilon)$ commutes with $(1+A)^{-1}$, $R_q(z;f,A_\varepsilon)x \in \dom(A)$.  Moreover, using (\ref{bpfc}) and the compatibility of calculi,
\begin{align*}
\lefteqn{f(A) R_q(z;f,A_\varepsilon) x - x} \qquad \\
 &= f(A) R_q(z;f,A_\varepsilon) x - f(A_\varepsilon) R_q(z;f,A_\varepsilon) x \\
&=  \int_{0+}^\infty (1 - e^{-\varepsilon t}) T(t) R_q(z;f,A_\varepsilon) x \,\mu(dt) - \varepsilon b R_q(z;f,A_\varepsilon) x,
\end{align*}
where $(a,b,\mu)$ is the L\'evy triple of $f$ and $(T(t))_{t\ge0}$ is the bounded $C_0$-semigroup generated by $A$.  By the uniform sectoriality of $(A_\varepsilon)_{\varepsilon\in(0,1]}$ there exists $C_z$ such that $\|R_q(z;f,A_\varepsilon)\| \le C_z$.  Since $\|T(t)\| \le K$ for $t\ge0$, we have
\[
\left\|f(A) R_q(z;f,A_\varepsilon) x - x\right\| \le \varepsilon b C_z \|x\| + C_zK \int_0^\infty (1 - e^{-\varepsilon t}) \, \mu(dt) \|x\| \to 0
\]
as $\varepsilon \to 0+$.  Since $f(A)$ is closed, we have that $(z+f(A)) R_q(z;f,A)x = x$ for $x \in \dom(A)$.  Similarly, $ R_q(z;f,A)(z+f(A))x = x$ for $x \in \dom(A)$.  Using that $\dom(A)$ is a dense subspace of $X$ and a core for $f(A)$,   $R_q(z;f,A)$ is bounded and $f(A)$ is closed, it follows that 
$(z+f(A)) R_q(z;f,A)x = x$ for all $x \in X$ and that  $R_q(z;f,A)(z+f(A))x = x$ for all $x \in \dom(f(A))$.
\end{proof}

The following corollary of Theorem \ref{TCbf11} provides a sectorial resolvent estimate for $f(A)$ with explicit constants.  The constants depend on $f$ only through the constant $\kappa_\theta(f)$ in Definition \ref{defcong}, and they are independent of $f$ when $f \in \mathcal{BF}$.  This knowledge may be of value for various approximation procedures when uniformity of a limit is required.

\begin{cor}\label{TCbf12}
Let $A\in {\rm Sect}(\omega)$ for some $\omega\in [0,\pi/2)$, and let $f\in \E$. Assume that either $A$ has dense range or $f \in \mathcal{BF}$.  Then $f(A)\in {\rm Sect}(\omega)$.  

More precisely, when $\theta \in (0,\pi-\omega)$ and 
\[
\max \left( \frac{\pi}{\pi-\theta}, 2 \right) < q < \frac{\pi}{\omega},
\]
one has
\begin{equation}
\label{Ra110}
\left\|z(z+f(A))^{-1}\right\|  
\le \frac{1}{\sin(\pi/q)} +\frac{q\tilde{M}_{\omega,q,0}(A)} {\pi \cos^2(\pi/q)} \frac{\kappa_{\pi/q}(f)}{\cos^2\left((\pi/q+\theta)/2\right)},
\quad  z\in \Sigma_\theta,
\end{equation}
where $\kappa_{\pi/q}(f)$ satisfies {\rm(\ref{tilde})} and  $\tilde{M}_{\omega,q,0}(A)$ is defined by {\rm(\ref{Mn})} in the Appendix.  If $f \in \mathcal{BF}$ then we may take $\kappa_{\pi/q}(f)=\tan(\pi/q)$.
\end{cor}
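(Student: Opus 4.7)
The plan is to take the first resolvent representation in (\ref{Ra+}) of Theorem \ref{TCbf11}, multiply by $z$, and estimate the two resulting summands separately to produce (\ref{Ra110}). In case (a) the density of the range of $A$ forces injectivity, so Theorem \ref{TCbf11}(a) applies with $f \in \mathcal{E}$; in case (b), $f \in \mathcal{BF} \subset \mathcal{E}$ by Proposition \ref{cong}(b), and Theorem \ref{TCbf11}(b) applies regardless of injectivity. In both situations $f(\infty)$ exists in $[0,\infty]$ (Proposition \ref{Elim}) and $\kappa_{\pi/q}(f)$ in (\ref{tilde}) is finite, so the representation and the relevant integral estimates are both available.

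The leading term $|z| \cdot |(z+f(\infty))^{-1}|$ is handled by a short geometric computation. Writing $z = |z|e^{i\phi}$ with $|\phi| \le \theta < \pi - \pi/q$ and noting that $f(\infty) \in [0,\infty)$ lies on the non-negative real axis (the case $f(\infty)=\infty$ trivialises the term), one has $|z + f(\infty)|^2 = |z|^2 + 2|z|f(\infty)\cos\phi + f(\infty)^2$. For $|\phi| \le \pi/2$ this is bounded below by $|z|^2$; for $|\phi| > \pi/2$, minimising the right-hand side over $f(\infty) \ge 0$ gives $|z|^2 \sin^2|\phi|$. Since $\sin$ is decreasing on $(\pi/2, \pi)$ and $\sin(\pi - \pi/q) = \sin(\pi/q)$, one concludes $|z+f(\infty)| \ge |z|\sin(\pi/q)$, producing the summand $1/\sin(\pi/q)$ in (\ref{Ra110}).

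For the operator-valued integral, three ingredients combine. First, (\ref{Pest}) applied with the angles $\theta$ and $\pi/q$, followed by (\ref{ts4}) applied to $f(te^{\pm i\pi/q})$, give the lower bound
\[
|z + f(te^{i\pi/q})| \cdot |z + f(te^{-i\pi/q})| \ge \cos^2\bigl((\theta+\pi/q)/2\bigr)\,\cos^2(\pi/q)\,(|z|+f(t))^2.
\]
Second, Proposition \ref{PrBBL} gives $A^q \in {\rm Sect}(q\omega)$ with $q\omega < \pi$, hence a sectorial resolvent bound $\|(A^q + t^q)^{-1}\| \le \tilde{M}_{\omega,q,0}(A)\, t^{-q}$ with the constant from the Appendix. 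Third, applying Definition \ref{defcong} at $r = |z|$ bounds the scalar integral $J_{\pi/q}(|z|;f)$ by $\kappa_{\pi/q}(f)/|z|$. Multiplying by $|z| q/\pi$ and collecting the factors produces the second summand in (\ref{Ra110}).

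The explicit value $\kappa_{\pi/q}(f) = \tan(\pi/q)$ for $f \in \mathcal{BF}$ comes from the Bernstein inequality (\ref{BFD}) applied at $\theta = \pi/q$; combined with the monotonicity bound $f(t) \ge f(t\cos(\pi/q))$ and the substitution $s = t\cos(\pi/q)$, the integral $J_{\pi/q}(r;f)$ collapses to $\tan(\pi/q) \int_0^\infty f'(s)(r+f(s))^{-2}\,ds = \tan(\pi/q)/(r+f(0+)) \le \tan(\pi/q)/r$. The sectoriality assertion $f(A) \in {\rm Sect}(\omega)$ follows from (\ref{Ra110}): given $\omega' \in (\omega,\pi)$, set $\theta = \pi - \omega'$ and choose any $q \in (\max(\pi/(\pi-\theta),2),\pi/\omega)$, a non-empty interval precisely because $\omega < \pi/2$. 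The main obstacle is organisational rather than conceptual: tracking each cosine factor in (\ref{Ra110}) back to its origin in (\ref{Pest}) or (\ref{ts4}), and ensuring the resolvent bound on $A^q$ matches the constant $\tilde{M}_{\omega,q,0}(A)$ defined in the Appendix.
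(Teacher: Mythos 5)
Your proposal is correct and follows essentially the same route as the paper: it invokes the resolvent representation of Theorem \ref{TCbf11}, bounds the leading term by the elementary geometric estimate $|z+f(\infty)|\ge|z|\sin(\pi/q)$, and bounds the integral term by combining the estimate (\ref{Pest})/(\ref{ts4}) as in (\ref{integ}) with the fractional-power bound $\|t^q(A^q+t^q)^{-1}\|\le\tilde M_{\omega,q,0}(A)$ from Proposition \ref{PrBBL10} and the defining inequality (\ref{tilde}) for $\kappa_{\pi/q}(f)$. Your derivation of $\kappa_{\pi/q}(f)=\tan(\pi/q)$ for $f\in\mathcal{BF}$ via (\ref{BFD}) and the substitution $s=t\cos(\pi/q)$ is exactly the computation the paper performs through (\ref{Ctheta}).
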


\begin{proof}  The assumptions imply that $f(A)$ has dense domain.  Let $z\in \Sigma_\theta$.
Using Theorem \ref{TCbf11} and Proposition \ref{PrBBL10}, and estimating similarly to (\ref{integ}), we have
\begin{align*}
\lefteqn{\left\|z(z+f(A))^{-1}\right\|} \quad\\
&\le \frac{|z|}{|z+f(\infty)|}+\frac{q|z|}{\pi} \int_0^\infty
\frac{|\operatorname{Im}f(te^{i\pi/q})| \,t^{q-1}}
{|z+f(te^{i\pi/q})| \, |z+f(te^{-i\pi/q})|} \, \frac{\tilde{M}_{\omega,q,0}(A)}{t^q} \, dt\\
&\le \frac{|z|}{|z+f(\infty)|} + \frac{q\tilde{M}_{\omega,q,0}(A) |z| J_q(|z|;f)} {\pi \cos^2(\pi/q)\cos^2\left((\pi/q+\theta)/2\right)},
\end{align*}
where ${J}_q(|z|;f)$ is defined by  (\ref{tilde}).  Since $f(\infty) \ge 0$, we have $|z+f(\infty)| \ge |z|$ if $z \in \C_+$.  If $\theta > \pi/2$ and $z \in \Sigma_\theta \setminus \C_+$, then
\[
|z + f(\infty)| \ge  |z| \sin(\pi-\theta) \ge |z| \sin (\pi/q),
\]
since $\pi-\theta > \pi/q$.  Since $|z| J_q(|z|;f) \le \kappa_{\pi/q}(f)$, we obtain (\ref{Ra110}).   When $f \in \mathcal{BF}$, (\ref{BFD}) shows that we may apply (\ref{Ctheta}) in the proof of Proposition \ref{lemDE} with $\theta = \pi/q$,  $a=\infty$, $b=\cos(\pi/q)$ and $c = \sin(\pi/q)$ to show that we may take $\kappa_{\pi/q}(f) = \tan(\pi/q)$.
\end{proof}

\begin{cor} \label{TCbf13}
Let $-A$ be the generator of a sectorially bounded holomorphic $C_0$-semigroup of angle $\theta \in (0,\pi/2]$, and assume that $A$ has dense range.  For any $f \in \E$, $-f(A)$ is the generator of a sectorially bounded holomorphic $C_0$-semigroup of angle $\theta$.
%
\end{cor}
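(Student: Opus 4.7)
The plan is to deduce this corollary almost immediately from Corollary \ref{TCbf12}, which has already established the sectoriality-preservation statement at the level of sectorial operators; the present corollary is just its semigroup-theoretic reformulation.

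First I would set $\omega := \pi/2 - \theta$, so that $\omega \in [0, \pi/2)$ since $\theta \in (0, \pi/2]$. The hypothesis that $-A$ generates a sectorially bounded holomorphic $C_0$-semigroup of angle $\theta$ translates, via the standard correspondence recalled at the start of Section \ref{pres} (see also \cite[Section 2.1]{Ha06}), into the assertion that $A \in \operatorname{Sect}(\omega)$ (and in particular $A$ is densely defined, as part of the definition of sectoriality adopted here). Together with the two remaining hypotheses---that $A$ has dense range and that $f \in \E$---this places us exactly in the setting of Corollary \ref{TCbf12}, whose conclusion is $f(A) \in \operatorname{Sect}(\omega)$. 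Concretely, this gives that $f(A)$ is closed and densely defined, $\sigma(f(A)) \subset \overline{\Sigma}_\omega$, and for each $\omega' \in (\omega, \pi)$ the resolvent estimate $\|z(z+f(A))^{-1}\| \le M(f(A),\omega')$ holds on $\Sigma_{\pi-\omega'}$, as recorded quantitatively in \eqref{Ra110}.

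Finally, I would invoke the same correspondence in the opposite direction: a densely defined operator lying in $\operatorname{Sect}(\omega)$ with $\omega < \pi/2$ is exactly the negative of the generator of a sectorially bounded holomorphic $C_0$-semigroup of angle $\pi/2 - \omega$. Applied to $f(A)$, this yields the desired statement with angle $\pi/2 - \omega = \theta$, so that the sectoriality angle of $A$ is preserved verbatim by the subordination.

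There is no real obstacle at this step, since the substantive work---the resolvent representation of Theorem \ref{TCbf11}, the absolute convergence of the integrals secured by the class $\E$ (Definition \ref{defcong} and Proposition \ref{Elim}), and the sectorial estimate \eqref{Ra110}---has already been carried out for Corollary \ref{TCbf12}. The only small verification is that the condition $q \in (2, \pi/\omega)$ of Theorem \ref{TCbf11} admits a valid choice of $q$ in both cases $\omega = 0$ (where $\pi/\omega$ is interpreted as $\infty$, so any $q > 2$ works) and $\omega \in (0, \pi/2)$ (where $\pi/\omega > 2$). Thus the corollary is best viewed as the semigroup-theoretic face of the resolvent analysis in Theorem \ref{TCbf11}, rather than as a result requiring any further argument.
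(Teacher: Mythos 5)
Your argument is exactly the paper's: the corollary is declared ``immediate from Corollary \ref{TCbf12}'' there, and your write-up merely spells out the standard translation $A \in \operatorname{Sect}(\omega) \Leftrightarrow -A$ generates a sectorially bounded holomorphic semigroup of angle $\pi/2-\omega$, with $\omega = \pi/2-\theta$. The reasoning is correct and nothing is missing.
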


\begin{proof}
This is immediate from Corollary \ref{TCbf12}. 
\end{proof}

Recall from Propositions \ref{sphere} and \ref{lemDE} that $\mathcal{E}$ contains both $\mathcal{S}$ and $\mathcal{D}$.  Hence, Corollaries \ref{TCbf12} and \ref{TCbf13} hold for all functions in $\mathcal{S}$ or $\mathcal{D}$, and in particular for the subclasses of $\mathcal{D}$ listed in (D\ref{dcl1})--(D\ref{dcl4}) on p.\pageref{dcl}, when $A$ has dense range.  Thus Corollary \ref{TCbf13} extends \cite[Corollary 4.10]{GT}
 where $f$ was of the form $h + 1/g$ for some $h,g \in \mathcal{BF}$.  Note that \cite[Corollary 4.7]{GT}, showing that Bernstein functions preserve generators of bounded $C_0$-semigroups which have holomorphic extensions to $\Sigma_\theta$, cannot be extended in this way because it fails for $f(z) = 1/z$ \cite[p.183]{GT}.

In \cite[Theorem 1.3]{GT} it is shown that if $g \in \mathcal{BF}$ and $g$ has a continuous extension which maps  $\overline{\Sigma}_\theta$ to $\overline{\C}_+$ and is holomorphic on $\Sigma_\theta$ where $\theta \in (\pi/2,\pi)$, then $g$ has the following {\it improving property}:  If $-A$ is the generator of a bounded $C_0$-semigroup, then the bounded $C_0$-semigroup generated by $-g(A)$ is holomorphic in $\Sigma_{\theta'}$, where $\theta' = \frac{\pi}{2}(1 - \alpha); \; \alpha = \pi/(2\theta) \in (1/2,1)$.  Let $f(z) = g(z^{1/\alpha}), \; z \in \C_+$.  Then $f \in \NPp$ and $g(z) = f(z^\alpha), \; z \in \Sigma_\theta$.  Thus the following result is a variant of \cite[Theorem 1.3]{GT}, addressing (Q4) from the Introduction.

\begin{cor} \label{improve}
Let $f \in \E$, $\alpha \in (1/2,1)$,  $\theta := \pi/(2\alpha)$ and $g(z) = f(z^\alpha)$ for $z \in \Sigma_\theta$.  Let $A \in \operatorname{Sect}(\omega)$ for some $\omega \in [0,\theta)$, and assume that either $A$ has dense range or $f\in\mathcal{BF}$.  Then $g(A)$ is defined in the holomorphic functional calculus and $g(A) \in \operatorname{Sect}(\alpha\omega)$.  In particular, $-g(A)$ is the generator of a bounded holomorphic $C_0$-semigroup of angle $\pi/2-\alpha\omega$.
\end{cor}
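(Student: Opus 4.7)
The plan is to derive the corollary from Corollary \ref{TCbf12} via the substitution $B := A^{\alpha}$, which reduces the statement to the sectoriality range $[0,\pi/2)$ that was handled above. Since $\alpha\omega < \alpha\theta = \pi/2$, Proposition \ref{PrBBL} gives $B \in \operatorname{Sect}(\alpha\omega)$, so $B$ is sectorial of angle strictly less than $\pi/2$. If $A$ has dense range, then $A$ is injective and the same holds for $B$; moreover the standard theory of fractional powers (see \cite[Proposition 3.1.1]{Ha06}) ensures that $\operatorname{ran}(B)$ is dense when $\operatorname{ran}(A)$ is. In the alternative case $f \in \mathcal{BF}$, dense range of $B$ is not needed, as Corollary \ref{TCbf12} covers that case without injectivity.

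With $B$ in hand, the first key step is to apply Corollary \ref{TCbf12} to the pair $(B, f)$: since $f \in \E$ (or $f \in \mathcal{BF}$) and $B \in \operatorname{Sect}(\alpha\omega)$ with $\alpha\omega \in [0,\pi/2)$, the corollary yields $f(B) \in \operatorname{Sect}(\alpha\omega)$, with the explicit resolvent estimate \eqref{Ra110}.

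The second key step is to identify $f(B)$ with $g(A)$. This is an instance of the Composition Rule \cite[Theorem 2.4.2]{Ha06} applied to the holomorphic map $\phi(z) = z^{\alpha}$, which sends $\Sigma_\theta = \Sigma_{\pi/(2\alpha)}$ into $\Sigma_{\pi/2} = \C_+$. Since $f \in \NPp$ is holomorphic on $\C_+$, the composition $g = f \circ \phi$ is well-defined on $\Sigma_\theta$, and the Composition Rule gives
\[
g(A) = (f \circ \phi)(A) = f(\phi(A)) = f(A^{\alpha}) = f(B),
\]
establishing at once that $g(A)$ is defined in the holomorphic functional calculus and that it coincides with $f(B)$. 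Combining the two steps yields $g(A) \in \operatorname{Sect}(\alpha\omega)$, and the ``in particular'' statement then follows from the standard characterisation of negative generators of sectorially bounded holomorphic $C_0$-semigroups as sectorial operators of angle less than $\pi/2$.

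The main obstacle I anticipate is the careful verification of the Composition Rule in the case $f \in \mathcal{BF}$ when $A$ is not assumed injective. In that situation $g(A)$ must be interpreted through a suitable regularisation on $\Sigma_\theta$, and the identity $g(A) = f(B)$ rests on the compatibility of the sectorial holomorphic calculus with the semigroup-based definition \eqref{bpfc} of $f$ of a semigroup generator; this compatibility was already exploited (via the approximation $A_\varepsilon = A + \varepsilon$) in the proof of Theorem \ref{TCbf11} and will need to be invoked here in the same spirit.
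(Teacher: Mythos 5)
Your proposal follows essentially the same route as the paper's own proof: it reduces to $B = A^\alpha$ via Proposition \ref{PrBBL} (the paper uses the quantitative version, Proposition \ref{PrBBL10}, but the qualitative conclusion $A^\alpha \in \operatorname{Sect}(\alpha\omega)$ is the same), applies Corollary \ref{TCbf12} to get $f(B) \in \operatorname{Sect}(\alpha\omega)$, and then identifies $g(A)$ with $f(A^\alpha)$ by the Composition Rule. The only substantive addition is your remark flagging the subtlety of the Composition Rule when $f \in \mathcal{BF}$ and $A$ is not injective; the paper's proof is equally terse on that point (it simply invokes the Composition Rule), so you have not introduced an error, though you also leave the flagged point unresolved rather than settling it. One small observation that would close the gap: when $f \in \mathcal{BF}$, $g(0+) = f(0+) < \infty$, so $g$ is bounded near $0$ and can be regularised by $(1+z)^{-1}$ alone, which does not require injectivity of $A$, and the compatibility of the two definitions of $f(A^\alpha)$ has already been established in the proof of Theorem \ref{TCbf11}.
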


\begin{proof}  Since $f \in \NPp$, $g$ has at most polynomial growth at $0$ and $\infty$, and hence $g(A)$ is defined by the holomorphic functional calculus.  By Proposition \ref{PrBBL10}, $A^\alpha \in \operatorname{Sect}(\alpha\omega)$. By Corollary \ref{TCbf12}, $f(A^\alpha) \in \operatorname{Sect}(\alpha\omega)$. By the Composition Rule, $g(A) = f(A^\alpha)$.    
\end{proof} 

When $-A$ generates a bounded $C_0$-semigroup, Corollary \ref{improve} can be applied with $\omega=\pi/2$ to provide an improving property for the function $g$.  When $f \in \mathcal{BF}$,  $g$ is the composition of two Bernstein functions, so $g \in \mathcal{BF}$ and the conclusion of the Corollary is covered by \cite[Theorem 1.3]{GT}.  However, if $f \notin \mathcal{BF}$ and $\alpha \in (1/2,1)$ is sufficiently close to $1$, then $g \notin \mathcal{BF}$, since $\mathcal{BF}$ is closed under pointwise limits \cite[Corollary 3.8(ii)]{SchilSonVon2010}.   Then the conclusion is outside the scope of \cite[Theorem 1.3]{GT}.

\begin{remark} \label{rempr}
 Let $A$ be sectorial of angle $\omega \in (0,\pi/2)$ with dense range, and let $\tilde F_n$ be as in Corollary \ref{Pp2}, so $\tilde F_n \in \mathcal{D} \subset \NPp$.   By Theorem \ref{TCbf11},  $\tilde F_n(A) \in \operatorname{Sect}(\omega)$.  Moreover, Proposition \ref{pnpp}(\ref{pnpp3})  shows that for any $\omega' \in (\omega,\pi/2)$ and for $m=1,2,\dots,n-1$, there exist constants $c_1,c_2>0$ such that
\[
\frac{c_1}{|z|} \le  |\tilde F_m(z)| \le c_2 |z|, \qquad z \in \Sigma_{\omega'}.
\]
Now \cite[Corollary 3.2]{BGT15}, together with the Composition Rule, can be used to show by induction that
\[
\tilde F_n(A)  =  f_1(A^{\alpha_1})^{\beta_1} \dots f_n(A^{\alpha_n})^{\beta_n},
\]
with its natural domain. 

For example, let $f \in \mathcal{BF}$ and $g(z) = z^{1/2}f(z^{1/2})$.  Then
\[
g(A) = A^{1/2}f(A^{1/2})  = f(A^{1/2})A^{1/2} \in \operatorname{Sect}(\theta)
\]
with
\begin{align*}
\dom(g(A)) &= \left\{ x \in \dom \big(f(A^{1/2})\big):  f(A^{1/2})x \in \dom(A^{1/2}) \right\} \\
&=\left\{ x \in \dom(A^{1/2}):  f(A^{1/2})x \in \dom \big(f(A^{1/2})\big) \right\}.
\end{align*}
\end{remark}

Now we state results for complete Bernstein functions applied to sectorial operators of any angle.  The results and their proofs are similar to Theorem \ref{TCbf11} and Corollary \ref{TCbf12},  with Lemmas \ref{phi} and \ref{In1} replacing Lemma \ref{In2}, Proposition \ref{cong}\ref{congb}) and (\ref{integ}).

\begin{thm}\label{TCbf}
Let $A\in {\rm Sect}(\omega)$, $\omega\in [0,\pi)$, $q\in \left(1,\pi/\omega\right)$, $\psi=\pi\left(1-\frac{1}{q}\right)$, 
and let $\varphi\in \mathcal{CBF}$, $z\in \Sigma_\psi$. Then $z+\varphi(A)$ is invertible and
\begin{align*}
 \lefteqn{(z + \varphi(A))^{-1}} \\
 &= (z+\varphi(\infty))^{-1}I  +\frac{q}{\pi}\int_0^\infty 
\frac{\operatorname{Im} \varphi(te^{i\pi/q})\,t^{q-1}}
{(z+\varphi(te^{i\pi/q}))(z+\varphi(te^{-i\pi/q}))} \,(A^q+t^q)^{-1}\,dt \nonumber\\
 &= (z+\varphi(0+))^{-1}I  -\frac{q}{\pi}\int_0^\infty
\frac{\operatorname{Im} \varphi(te^{i\pi/q})}
{(z+\varphi(te^{i\pi/q}))(z+\varphi(te^{-i\pi/q}))t} \,A^q (A^q+t^q)^{-1}\,dt.  \nonumber
\end{align*}
Here the integrals are absolutely convergent.
\end{thm}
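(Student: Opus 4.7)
The plan is to adapt the proof of Theorem~\ref{TCbf11} to the present setting, exploiting the sharper properties of complete Bernstein functions as compared with general $\mathcal{E}$- or $\mathcal{BF}$-functions. Specifically, Proposition~\ref{pcbf}(\ref{pcbf1}) gives that $\varphi$ maps every sector $\Sigma_\theta$ with $\theta<\pi$ into itself (not only those with $\theta<\pi/2$), and Lemma~\ref{In1} already provides the scalar resolvent formula together with an absolute convergence estimate valid for every $q>1$. These two facts allow the argument to cover $\omega\in[0,\pi)$ and $q\in(1,\pi/\omega)$, rather than the narrower range $q\in(2,\pi/\omega)$ needed in Theorem~\ref{TCbf11}.

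Denote by $R_q(z;\varphi,A)$ either of the operator-valued integrals appearing on the right-hand side. First I would check that $R_q(\,\cdot\,;\varphi,A)$ is well-defined and holomorphic on $\Sigma_\psi$. Since $q\omega<\pi$, Proposition~\ref{PrBBL} gives $A^q\in\operatorname{Sect}(q\omega)$ with $\|(A^q+t^q)^{-1}\|=O(t^{-q})$. Combining this bound with the pointwise estimate on the integrand furnished by Lemma~\ref{In1} (which itself rests on Lemma~\ref{phi}) yields absolute convergence of the integral, uniformly for $z$ in compact subsets of $\Sigma_\psi$. Hence $R_q(\,\cdot\,;\varphi,A)$ is a holomorphic $\mathcal{L}(X)$-valued function on $\Sigma_\psi$, and by analytic continuation it suffices to prove the identity for $z=s>0$. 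Once one of the two representations is established, the other follows from the scalar relation between $\varphi(0+)$ and $\varphi(\infty)$ already contained in the derivation of Lemma~\ref{In2}.

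For injective $A$, I would reproduce the Fubini argument from the proof of Theorem~\ref{TCbf11}: pick $\omega'\in(\omega,\pi)$ with $q\omega'<\pi$, take $\tau(\lambda)=\lambda/(1+\lambda)^2$ as regulariser, represent $\bigl(\lambda/((1+\lambda)^2(s+\varphi(\lambda)))\bigr)(A)$ by the Cauchy integral along $\partial\Sigma_{\omega'}$, substitute the scalar formula of Lemma~\ref{In1} for $(s+\varphi(\lambda))^{-1}$, exchange the order of integration (justified by the absolute convergence in Lemma~\ref{In1}), and identify the inner contour integrals as $A(1+A)^{-2}$ and $A(1+A)^{-2}(A^q+t^q)^{-1}$. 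The product rule \cite[Theorem~1.3.2c)]{Ha06} for the holomorphic functional calculus then gives $R_q(s;\varphi,A)=(s+\varphi(A))^{-1}$.

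For the general case, where $A$ is not assumed injective, I would approximate by the operators $A_\varepsilon:=A+\varepsilon$ for $\varepsilon\in(0,1]$, which are injective and uniformly sectorial of angle $\omega$. The injective case applied to each $A_\varepsilon$ gives $(z+\varphi(A_\varepsilon))^{-1}=R_q(z;\varphi,A_\varepsilon)$, and passage to the limit $\varepsilon\to0+$ on the operator-integral side is routine: \cite[Proposition~3.1.9]{Ha06} yields $(A_\varepsilon^q+t^q)^{-1}x\to(A^q+t^q)^{-1}x$ strongly, and the Dominated Convergence Theorem transfers this to $R_q(z;\varphi,A_\varepsilon)x\to R_q(z;\varphi,A)x$. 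The main obstacle is to identify the strong limit of $(z+\varphi(A_\varepsilon))^{-1}$ as $(z+\varphi(A))^{-1}$, because the Bernstein-function semigroup argument used in Theorem~\ref{TCbf11} is unavailable when $\omega\ge\pi/2$. For complete Bernstein functions this is bypassed by the Stieltjes representation~\eqref{Cbf}, which gives
\[
\varphi(A)x=ax+bAx+\int_{0+}^\infty A(A+s)^{-1}x\,\nu(ds),\qquad x\in\dom(A);
\]
since $\dom(A)$ is a core for $\varphi(A)$, a dominated convergence argument in the parameter $s$ yields $\varphi(A_\varepsilon)x\to\varphi(A)x$ on this core, and combined with the uniform boundedness of $(z+\varphi(A_\varepsilon))^{-1}$ on $X$ this is enough to conclude that $(z+\varphi(A_\varepsilon))^{-1}\to(z+\varphi(A))^{-1}$ strongly, completing the proof.
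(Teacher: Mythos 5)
Your proposal follows the structure the paper indicates (Lemma~\ref{In1} replacing Lemma~\ref{In2}, Lemma~\ref{phi} replacing the estimates for $\mathcal{BF}$, Proposition~\ref{pcbf} in place of Proposition~\ref{pnpp}), and you correctly flag the one place where the proof of Theorem~\ref{TCbf11} does not transfer verbatim: when $\omega\ge\pi/2$, the non-injective case of case~b) can no longer invoke the Hille--Phillips representation~\eqref{bpfc}, since $-A$ no longer generates a bounded $C_0$-semigroup. Your substitute via the Stieltjes representation is a sound idea, and the dominated-convergence computation does go through (one has to split the $\nu$-integral at $s=1$: on $(0,1)$ use the uniform bound on $A(A+s)^{-1}-A_\varepsilon(A_\varepsilon+s)^{-1}$ together with the finiteness of $\nu$ near~$0$, and on $(1,\infty)$ use the $O(1/s)$ decay from $\int_1^\infty\nu(ds)/s<\infty$).

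The one step that needs to be spelled out more carefully is the closing sentence ``combined with the uniform boundedness of $(z+\varphi(A_\varepsilon))^{-1}$ this is enough to conclude that $(z+\varphi(A_\varepsilon))^{-1}\to(z+\varphi(A))^{-1}$ strongly.'' As written this presupposes the invertibility of $z+\varphi(A)$, which is part of what the theorem asserts. The correct route is the one used at the end of the proof of Theorem~\ref{TCbf11}: from $R_q(z;\varphi,A_\varepsilon)x\to R_q(z;\varphi,A)x$, $R_q(z;\varphi,A_\varepsilon)x\in\dom(A)\subset\dom(\varphi(A))$, and the convergence $[\varphi(A)-\varphi(A_\varepsilon)]R_q(z;\varphi,A_\varepsilon)x\to0$ (uniformly, since $\|R_q(z;\varphi,A_\varepsilon)x\|\le C_z\|x\|$), closedness of $\varphi(A)$ gives $(z+\varphi(A))R_q(z;\varphi,A)x=x$ for $x\in\dom(A)$; the symmetric argument gives $R_q(z;\varphi,A)(z+\varphi(A))x=x$; density of $\dom(A)$ and the core property then yield invertibility and the identity $R_q(z;\varphi,A)=(z+\varphi(A))^{-1}$.

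One further remark: the detour through $A_\varepsilon$ and the Stieltjes representation can be avoided entirely in the $\mathcal{CBF}$ case. Because $\varphi$ is bounded near $0$, both $\varphi(\lambda)$ and $(s+\varphi(\lambda))^{-1}$ are regularisable by $(1+\lambda)^{-n}$ for a suitable $n\ge2$, which does not require $A$ to be injective. Running the Cauchy/Fubini argument with that regulariser handles injective and non-injective $A$ uniformly and yields the resolvent identity in one step. This is probably the cleanest reading of the paper's assertion that the proof is ``similar'' to Theorem~\ref{TCbf11}.
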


\begin{cor}\label{TCbf1}
Let $A\in {\rm Sect}(\omega)$, $\omega\in [0,\pi)$, 
and let $\varphi\in \mathcal{CBF}$. Then $\varphi(A)\in {\rm Sect}(\omega)$.

More precisely, when $\theta \in (0,\pi-\omega)$ and 
\[
\max \left( \frac{\pi}{\pi-\theta}, 1 \right) < q < \frac{\pi}{\omega},
\]
one has
\begin{align*}
\label{Ra110}
\left\|z(z+\varphi(A))^{-1}\right\|  
\le \frac{1}{\sin(\pi/q)} +\frac{2q \tan(\pi/(2q)) \tilde{M}_{\omega,q,0}(A)} {\pi \cos(\pi/q)\cos^2\left((\pi/q+\theta)/2\right)},
\quad  z\in \Sigma_\theta,
\end{align*}
where $\tilde{M}_{\omega,q,0}(A)$ is defined by {\rm(\ref{Mn})} in the Appendix.
\end{cor}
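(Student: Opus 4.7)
The plan is to mirror the proof of Corollary \ref{TCbf12}, but replace the $\E$-style bound on the imaginary part with the sharper $\mathcal{CBF}$-bound from Lemma \ref{phi}, and exploit the extra geometric information provided by Proposition \ref{pcbf}. First I would invoke Theorem \ref{TCbf}, which is already established for complete Bernstein functions and all $\omega \in [0,\pi)$, to obtain the absolutely convergent representation
\[
(z+\varphi(A))^{-1}
= \frac{I}{z+\varphi(\infty)} + \frac{q}{\pi}\int_0^\infty
\frac{\operatorname{Im}\varphi(te^{i\pi/q})\,t^{q-1}}
{(z+\varphi(te^{i\pi/q}))(z+\varphi(te^{-i\pi/q}))} (A^q+t^q)^{-1}\,dt,
\]
which makes the problem purely about estimating the scalar integrand and the norm of $(A^q+t^q)^{-1}$.

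Next I would apply the four ingredients in turn. For the operator factor, Proposition \ref{PrBBL10} yields $\|(A^q+t^q)^{-1}\| \le \tilde{M}_{\omega,q,0}(A)/t^q$, which cancels the $t^{q-1}$ in the numerator and leaves a factor $1/t$. For the numerator, Lemma \ref{phi} gives $|\operatorname{Im}\varphi(te^{i\pi/q})| \le 2\tan(\pi/(2q))\,t\varphi'(t)$. For the denominators, combining the elementary inequality (\ref{ts1}) with the $\mathcal{CBF}$-estimate (\ref{R10}) from Proposition \ref{pcbf}(\ref{pcbf3}) produces
\[
|z+\varphi(te^{\pm i\pi/q})|
\ge \cos\!\big((\theta+\pi/q)/2\big)\,\cos(\pi/(2q))\,(|z|+\varphi(t)),
\]
whose square controls the product in the denominator (note that this is where the proof uses the superior $\cos(\cdot/2)$ bound for complete Bernstein functions, as opposed to the $\cos(\cdot)$ bound from (\ref{Pest}) used in the $\E$ case).

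Putting these estimates together collapses the integrand into a constant multiple of $\varphi'(t)/(|z|+\varphi(t))^2$, and the substitution $u=\varphi(t)$ reduces the integral to $\int_{\varphi(0+)}^{\varphi(\infty)} du/(|z|+u)^2 \le 1/|z|$. Multiplying by $|z|$ and collecting constants gives the stated bound on $\|z(z+\varphi(A))^{-1}\|$, after noting that $\cos^2(\pi/(2q)) \ge \cos(\pi/q)$ if one wishes to simplify the denominator to the form displayed. The constant term $(z+\varphi(\infty))^{-1}$ contributes $|z|/|z+\varphi(\infty)| \le 1/\sin(\pi/q)$, handled exactly as in Corollary \ref{TCbf12}: for $z \in \C_+$ one has $|z+\varphi(\infty)| \ge |z|$, and for $z \in \Sigma_\theta \setminus \C_+$ one uses $\pi - \theta > \pi/q$.

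No genuine obstacle is expected, since Theorem \ref{TCbf} already packages the analytic content and Lemma \ref{phi} provides the key pointwise bound; the task is essentially bookkeeping of constants. The only point requiring a little care is the proper matching of angular inequalities: one must verify that the assumption $q > \pi/(\pi-\theta)$ guarantees $(\theta+\pi/q)/2 < \pi/2$ so that $\cos((\theta+\pi/q)/2) > 0$, which is exactly the hypothesis on $q$ in the statement.
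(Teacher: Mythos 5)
Your proof is correct and is precisely the argument the paper intends: the authors state that Corollary \ref{TCbf1} is proved by repeating the proof of Corollary \ref{TCbf12} with Lemma \ref{phi}, Lemma \ref{In1}/Theorem \ref{TCbf} and the $\mathcal{CBF}$-estimates of Proposition \ref{pcbf} replacing the $\mathcal{E}$-machinery, which is exactly what you do. Your bookkeeping in fact produces the slightly sharper denominator $\cos^2(\pi/(2q))$ in place of $\cos(\pi/q)$, and your remark that $\cos^2(\pi/(2q))\ge\cos(\pi/q)$ is the correct way to pass to the displayed constant (which is only meaningful for $q>2$ anyway, so your form of the estimate is the one to keep).
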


\begin{example}
As in Example \ref{exlog}, consider the complete Bernstein function $\varphi(z) = \log(1+z)$, and let $q=2$.  For $A \in \operatorname{Sect}(\omega)$, where $\omega \in (0,\pi/2)$, and $z\in\C_+$, we have
\[
(z + \log(1+A))^{-1} = \int_0^{\pi/2} \frac{s \sin s} {\left( (z - \log \cos s)^2 + s^2\right) (\cos s)^3 } (A^2 + \tan^2 s)^{-1} \, ds.
\]
This may be compared with \cite[Example 2, p.501]{Mi98}.
\end{example}

Finally we give an application which is a continuous-time counterpart of \cite[Theorem 1.2]{GT2}.  An operator $T \in \mathcal{L}(X)$ is said to be a \emph{Ritt operator of angle} $\theta \in (0,\pi/2]$  if $\sigma(T) \subset \{\lambda \in 1- \overline{\Sigma}_\theta: |\lambda| \le 1\}$ and for each $\theta' \in (\theta,\pi/2)$ there exists $C_{\theta'}$ such that 
\[
\|(\lambda-T)^{-1} \| \le \frac{C_{\theta'}}{|\lambda-1|}, \qquad \lambda \in 1- \Sigma_{\theta'}, \;|\lambda|\le1.
\]
The class of Ritt operators has received considerable attention in recent years
due to its similarities with the class of generators of sectorially bounded
holomorphic semigroups. See \cite{GT2} for more on its properties and relevance.

\begin{thm}  \label{TCbf14}
Let $(S(t))_{t\ge0}$ be a sectorially bounded holomorphic semigroup of angle $\theta \in (0,\pi/2]$, and $\mu$ be a probability measure on $[0,\infty)$.   Let  
\[
Tx = \int_0^\infty  S(t)x \, \mu(dt), \qquad  x \in X.
\]
Then  $T$ is a Ritt operator of angle $\frac{\pi}{2}-\theta$.

\begin{proof}
Let $f \in \mathcal{BF}$ have the L\'evy triple $(0,0,\mu)$.  Then, by (\ref{bpfc}),
\[
Tx = x - \int_0^\infty (1 - S(t))x \,\mu(dt) = x - f(A)x,
\]
for $x \in \dom(A)$ and then for $x \in X$ by density.  By Corollary \ref{TCbf12}, $I - T = f(A) \in \operatorname{Sect}(\theta)$.  By the spectral mapping theorem \cite[Theorem 16.4.1]{Hille},
\[
\sigma(T) \subset \{\lambda \in \C: |\lambda|<1\} \cup \{1\}.
\]
  Then $T$ is Ritt of angle $\frac{\pi}{2}-\theta$ \cite[Theorem 4.1]{GT2}.
\end{proof}
\end{thm}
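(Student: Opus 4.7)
The plan is to realize $T$ as $I - f(A)$, where $A$ is the negative generator of $(S(t))_{t\ge 0}$ and $f$ is a Bernstein function naturally associated with $\mu$. Once this identification is made, the sectoriality of $f(A)$ furnished by Corollary~\ref{TCbf12} translates, via the change of spectral variable $\lambda \mapsto 1-\lambda$, into the Ritt resolvent bound for $T$.

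First I would define $f(z) := \int_{0+}^\infty (1 - e^{-tz})\,\mu(dt)$ for $z \in \C_+$. Since $\mu$ is a probability measure, $\int_{0+}^\infty s/(1+s)\,\mu(ds) \le 1 < \infty$, so $f \in \mathcal{BF}$ with L\'evy triple $(0,0,\mu)$. By the hypothesis on the semigroup, $A \in \operatorname{Sect}(\pi/2 - \theta)$ and $(S(t))_{t\ge 0}$ is bounded on $[0,\infty)$, hence $T$ is a well-defined bounded operator with $\|T\| \le \sup_{t\ge 0}\|S(t)\|$. Formula~(\ref{bpfc}) applied to this triple gives, for every $x \in \dom(A)$,
\[
f(A)x = \int_{0+}^\infty (x - S(t)x)\,\mu(dt) = x - Tx.
\]
Since $\dom(A)$ is a core for $f(A)$ and $T$ is bounded, this identity extends by closedness to yield $f(A) = I - T$ as bounded operators on $X$.

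Second, Corollary~\ref{TCbf12} applies (we are in case~b), $f \in \mathcal{BF}$) and gives $f(A) \in \operatorname{Sect}(\pi/2 - \theta)$. Equivalently, for each $\theta' \in (\pi/2 - \theta, \pi)$ there is a constant $C_{\theta'}$ with
\[
\|w\,(w + f(A))^{-1}\| \le C_{\theta'}, \qquad w \in \Sigma_{\pi - \theta'}.
\]
Substituting $\lambda = 1 - w$ and using $f(A) = I - T$ rewrites this as
\[
\|(\lambda - T)^{-1}\| \le \frac{C_{\theta'}}{|1 - \lambda|}, \qquad 1 - \lambda \in \Sigma_{\pi - \theta'},
\]
which is precisely the Ritt resolvent estimate at angle $\pi/2 - \theta$. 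The spectral inclusion $\sigma(T) \subset \{|\lambda| < 1\} \cup \{1\}$ follows from $\|T\|\le 1$ combined with the spectral mapping theorem for the holomorphic functional calculus applied to $I - T = f(A)$, since $\sigma(f(A)) \subset \overline{\Sigma}_{\pi/2-\theta}$ forces $\sigma(T) \setminus \{1\}$ to lie strictly inside the unit disc.

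The main technical point is the passage from the pointwise identity $f(A)x = x - Tx$ on $\dom(A)$ to the operator identity $f(A) = I - T$ on all of $X$, which hinges on $T$ being bounded and $\dom(A)$ being a core for $f(A)$; once this is in place, the Ritt property is essentially a reformulation of sectoriality of $f(A)$ under a linear change of the spectral parameter.
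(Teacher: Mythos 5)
Your argument coincides with the paper's in its main lines: you identify $T=I-f(A)$ via (\ref{bpfc}) and the core property of $\dom(A)$, and you apply Corollary \ref{TCbf12} to get $f(A)\in\operatorname{Sect}(\pi/2-\theta)$. Where you diverge is at the end: instead of citing \cite[Theorem 4.1]{GT2} you translate the sectorial resolvent bound directly into the Ritt estimate. That translation is legitimate, though the substitution should be $\lambda=1+w$ rather than $\lambda=1-w$, since $w+f(A)=(1+w)-T$; then the bound $\|(\lambda-T)^{-1}\|\le C_{\theta'}/|\lambda-1|$ holds for $\lambda\in 1+\Sigma_{\pi-\theta'}=\C\setminus\bigl(1-\overline{\Sigma}_{\theta'}\bigr)$, which is exactly the complement of the Stolz sector.

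The genuine gap is in the spectral inclusion. You assert $\|T\|\le 1$, but the semigroup is only assumed sectorially bounded, not contractive, so you only get $\|T\|\le\sup_{t\ge0}\|S(t)\|$, which may exceed $1$; hence $\sigma(T)\subset\overline{\mathbb{D}}$ does not follow. Moreover, even granting $\sigma(T)\subset\overline{\mathbb{D}}$, your claim that $\sigma(f(A))\subset\overline{\Sigma}_{\pi/2-\theta}$ forces $\sigma(T)\setminus\{1\}$ strictly inside the unit disc is false: the set $\bigl(1-\overline{\Sigma}_{\pi/2-\theta}\bigr)\cap\overline{\mathbb{D}}$ meets the unit circle at the points $1-2\sin\theta\,e^{\pm i(\pi/2-\theta)}\neq 1$. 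Some version of the spectral inclusion is genuinely needed, because the resolvent estimate coming from sectoriality only places $\sigma(T)$ in the unbounded set $1-\overline{\Sigma}_{\pi/2-\theta}$, whereas the Ritt definition also requires $|\lambda|\le 1$ on the spectrum. The paper gets $\sigma(T)\subset\{\lambda:|\lambda|<1\}\cup\{1\}$ from the Hille--Phillips spectral mapping theorem \cite[Theorem 16.4.1]{Hille} applied to $T=\widehat{\mu}(A)$, using that $\lambda\mapsto\int_0^\infty e^{-t\lambda}\,\mu(dt)$ maps $\overline{\Sigma}_{\pi/2-\theta}$ into $\{\zeta:|\zeta|<1\}\cup\{1\}$; you should replace your norm argument by this (or an equivalent) step.
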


\section{Appendix: Fractional Powers} \label{frac}

In this section we present quantitative versions of Proposition \ref{PrBBL},
with explicit constants. We were not able to find such explicit estimates
in the literature.  In this paper we use only the case considered in Proposition \ref{PrBBL10} where $q>1$, but for completeness we give first the case where $q <1$, which was first considered by Kato \cite{Kato1960}.

\begin{prop}\label{PrBBL1}
Let $A$ be sectorial, so that
\begin{equation*}
M(A)=\sup_{s>0}\|s(A+s)^{-1}\|<\infty.
\end{equation*}
If $q\in (0,1)$ then $A^q \in {\rm Sect}(q\pi)$, and for every $\psi \in (0,(1-q)\pi)$ one has
\begin{equation}\label{Zzz}
\|(A^q+z)^{-1}\|\le \frac{\sin(\pi q)}{\pi q} \frac{(\pi
q+\psi)}{\sin(\pi q+\psi)}\frac{M(A)}{|z|}, \qquad z\in
\Sigma_\psi.
\end{equation}
\end{prop}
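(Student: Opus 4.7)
The plan is to reduce the operator assertion to a scalar integral identity via the functional calculus, and then estimate the resulting integral by Cauchy--Schwarz.

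First, I would establish the scalar identity: for $\lambda>0$ and $z\in\C$ with $|\arg z|<(1-q)\pi$,
\[
\frac{1}{z+\lambda^q}=\frac{\sin(\pi q)}{\pi}\int_0^\infty\frac{s^q\,ds}{(s+\lambda)\bigl(z^2+2s^q z\cos(\pi q)+s^{2q}\bigr)}.
\]
This comes from applying Cauchy's integral formula to $\mu\mapsto[(z+\mu^q)(\mu-\lambda)]^{-1}$ on $\C\setminus(-\infty,0]$ (principal branch of $\mu^q$) and deforming the loop around the simple pole at $\mu=\lambda$ to a keyhole contour hugging the negative real axis. The large-circle integral vanishes because the integrand is $O(|\mu|^{-1-q})$ and $q>0$, the small-origin integral vanishes likewise, and the jump $e^{i\pi q}-e^{-i\pi q}$ of $\mu^q$ across the cut produces the factor $\sin(\pi q)$; the hypothesis $|\arg z|<(1-q)\pi$ ensures the equation $z+\mu^q=0$ has no solution on the principal branch, so the integrand is holomorphic there. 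Both sides are holomorphic in $\lambda\in\C\setminus(-\infty,0]$, so the identity extends there by analytic continuation.

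Second, I would pass to operators. Inserting the scalar identity into the Dunford representation $(z+A^q)^{-1}=(2\pi i)^{-1}\int_\Gamma(z+\lambda^q)^{-1}(\lambda-A)^{-1}\,d\lambda$ along a contour $\Gamma\subset\C\setminus(-\infty,0]$ around $\sigma(A)$, swapping the order of integration by Fubini, and using $(2\pi i)^{-1}\int_\Gamma(\lambda-A)^{-1}(s+\lambda)^{-1}\,d\lambda=(s+A)^{-1}$, one obtains
\[
(z+A^q)^{-1}=\frac{\sin(\pi q)}{\pi}\int_0^\infty\frac{s^q(s+A)^{-1}\,ds}{z^2+2s^q z\cos(\pi q)+s^{2q}}.
\]
Taking norms using only $\|(s+A)^{-1}\|\le M(A)/s$, factoring the denominator as $(s^q+ze^{i\pi q})(s^q+ze^{-i\pi q})$, and substituting $u=s^q/|z|$ (so $s^{q-1}\,ds=(|z|/q)\,du$) reduce the estimate to
\[
\|(z+A^q)^{-1}\|\le\frac{M(A)\sin(\pi q)}{\pi q\,|z|}\int_0^\infty\frac{du}{|u+e^{i\theta_1}|\,|u+e^{i\theta_2}|},
\]
where $\theta_1=\arg z+\pi q$, $\theta_2=\arg z-\pi q$, each of modulus at most $\pi q+\psi<\pi$.

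Third, I would apply Cauchy--Schwarz together with the elementary evaluation
\[
\int_0^\infty\frac{du}{|u+e^{i\theta}|^2}=\int_0^\infty\frac{du}{(u+\cos\theta)^2+\sin^2\theta}=\frac{|\theta|}{\sin|\theta|},\qquad\theta\in(-\pi,\pi)\setminus\{0\},
\]
(a direct arctangent computation). Since $x\mapsto x/\sin x$ is increasing on $(0,\pi)$ and $\max(|\theta_1|,|\theta_2|)\le \pi q+\psi$, Cauchy--Schwarz gives the bound $(\pi q+\psi)/\sin(\pi q+\psi)$ for the integral, producing exactly \eqref{Zzz}. Since this estimate holds on every $\Sigma_\psi$ with $\psi<(1-q)\pi$, the resolvent set of $A^q$ contains such a sector, whence $\sigma(A^q)\subset\overline{\Sigma_{q\pi}}$ and $A^q\in\operatorname{Sect}(q\pi)$.

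The main technical obstacle will be justifying the Fubini swap in step two, which requires controlling the double integrand over $\Gamma\times(0,\infty)$ uniformly, using the sectorial bound $\|(\lambda-A)^{-1}\|=O(|\lambda|^{-1})$ on $\Gamma$ and the scalar decay from step three. A secondary subtlety worth flagging: the stated constant is sharper than what the naive sector inequality \eqref{ts1} would give (that approach yields $1/\cos^2((\pi q+\psi)/2)$, strictly weaker since $2\tan(x/2)\ge x$); it is the pairing of Cauchy--Schwarz with the monotonicity of $x/\sin x$ that delivers the precise form in \eqref{Zzz}.
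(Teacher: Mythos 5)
Your route is essentially the paper's: both arguments rest on Kato's integral representation
\[
(A^q+z)^{-1}=\frac{\sin(\pi q)}{\pi}\int_0^\infty\frac{t^q(A+t)^{-1}\,dt}{(t^qe^{i\pi q}+z)(t^qe^{-i\pi q}+z)},
\]
followed by $\|t(A+t)^{-1}\|\le M(A)$ and the exact evaluation $\int_0^\infty|ue^{i\theta}+1|^{-2}\,du=|\theta|/\sin|\theta|$. The only real divergence is in how the product of the two denominators is handled: the paper bounds each factor pointwise via $|t^qe^{\pm i\pi q}+z|\ge\big|t^qe^{i(\pi q+\psi)}+|z|\big|$ and evaluates a single integral, whereas you keep the factors separate, apply Cauchy--Schwarz, and use the monotonicity of $x/\sin x$ on $(0,\pi)$. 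Both devices produce exactly the constant $(\pi q+\psi)/\sin(\pi q+\psi)$, so this is a cosmetic difference; your closing remark that the cruder inequality \eqref{ts1} would lose the sharp constant is accurate.

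The one genuine soft spot is your self-contained derivation of the operator formula, which the paper avoids by citing \cite[Theorem 2]{Kato1960}. For a sectorial $A$ that need not be invertible, the contour $\Gamma=\partial\Sigma_{\theta}$ must pass through (or arbitrarily close to) the origin, and there both $(z+\lambda^q)^{-1}(\lambda-A)^{-1}$ and $(s+\lambda)^{-1}(\lambda-A)^{-1}$ are only $O(|\lambda|^{-1})$ --- precisely the resolvent bound you propose to use --- so neither the naive Dunford integral for $(z+A^q)^{-1}$ nor the identity $(2\pi i)^{-1}\int_\Gamma(s+\lambda)^{-1}(\lambda-A)^{-1}\,d\lambda=(s+A)^{-1}$ converges absolutely, and the Fubini swap cannot be justified with the tools you name. (Take $A=0$ on $\C$ to see the $1/|\lambda|$ blow-up is unavoidable.) Standard repairs: regularise by subtracting the value at the origin, e.g.\ replace $(z+\lambda^q)^{-1}$ by $(z+\lambda^q)^{-1}-z^{-1}(1+\lambda)^{-1}$ as in the extended holomorphic calculus; or prove the formula for the invertible operators $A+\varepsilon$, where $\Gamma$ may avoid $0$, and let $\varepsilon\to0+$; or verify directly, as Kato does, that the right-hand side is a two-sided inverse of $z+A^q$ (note the integral itself converges absolutely, since the integrand is $O(t^{q-1}|z|^{-2})$ near $0$ and $O(t^{-q-1})$ near $\infty$). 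Any of these closes the gap; the estimate that follows is correct and gives \eqref{Zzz}.
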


\begin{proof}
The property $A^q \in {\rm Sect}(q\pi)$ is part of Proposition \ref{PrBBL},  so we need only to  prove the resolvent estimate \eqref{Zzz}. If $z\in \Sigma_{\psi}$, then by \cite[Theorem 2]{Kato1960} we have
\begin{eqnarray}\label{Kr10}
(A^q+z)^{-1}&=& \frac{\sin(\pi q)}{\pi} \int_0^\infty\frac{t^q
(A+t)^{-1}\,dt}
{(t^q e^{i\pi q}+z)(t^q e^{-i\pi q}+z)}.
\end{eqnarray}
Since
\[
|te^{i\pi q} + z|    = \left| te^{i(\pi q  - \arg z)} +|z|\right|     \ge  \left| te^{i(\pi q  + \psi)} +|z|\right|, \qquad z\in\Sigma_\psi, \;t>0,
\]
and     
\[
\int_0^\infty \frac{dt}{|te^{i\theta}+s|^2}=\frac{\theta}{s\sin \theta},
\quad \theta\in (-\pi,\pi),\quad s>0
\]
(see (\cite[Formula 2.2.9.25]{Prud}), (\ref{Kr10})
implies that
\begin{align*}
\|(A^q +z)^{-1}\|&\le \frac{M\sin(\pi q)}{\pi}
\int_0^\infty\frac{t^{q-1}\,dt} {|t^q e^{i\pi q}+z| |t^q e^{-i\pi
q}+z|}
\\
&\le \frac{M\sin(\pi q)}{\pi} \int_0^\infty\frac{t^{q-1}\,dt}
{|t^q e^{i(\pi q+\psi)}+|z||^2}\\
&= \frac{M\sin(\pi q)}{\pi q} \int_0^\infty\frac{dt}
{|t e^{i(\pi q+\psi)}+|z||^2}\\
&=\frac{M\sin(\pi q)}{\pi q} \frac{\pi q +\psi}{\sin(\pi
q +\psi)}\frac{1}{|z|}, \qquad z\in \Sigma_\psi.  \qedhere
\end{align*}
\end{proof}

The next result is a quantitative version of Proposition
\ref{PrBBL} for $q>1$.

\begin{prop}\label{PrBBL10}
Let $A\in \Sect(\omega)$, $\omega\in [0,\pi)$. so that
\begin{equation}\label{EsR0}
\|z(A+z)^{-1}\|\le M(A,\omega'),\qquad z\in
\Sigma_{\pi-\omega'},\quad \omega'\in (\omega,\pi).
\end{equation}
Let $q>1$ be such that $q\omega<\pi$. Then $A^q\in \Sect(q\omega)$
and, moreover, for every $\psi \in (0,\pi-q\omega)$,
\begin{equation}\label{MEs0}
\|z(A^q+z)^{-1}\|\le \tilde{M}_{\omega,q,\psi}(A),\qquad z \in \Sigma_{\psi},
\end{equation}
where
\begin{align}
\label{Mn}
\tilde{M}_{\omega,q,\psi}(A) &:=
M(A)+\frac{2M(A,\beta_{\omega,q,\psi})}{\pi\cos(\beta_{\omega,q,\psi}/2)
\cos (q\beta_{\omega,q,\psi}/2)}, \\
\beta_{\omega,q,\psi}&:=\frac{\omega+(\pi-\psi)/q}{2}.  \nonumber
\end{align}
\end{prop}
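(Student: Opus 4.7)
The plan is to upgrade Proposition \ref{PrBBL} to the sharp quantitative bound \eqref{MEs0}. Since Proposition \ref{PrBBL} already gives $A^q\in\operatorname{Sect}(q\omega)$, the resolvent $(A^q+z)^{-1}$ is a bounded operator for every $z\in\Sigma_\psi$ with $\psi<\pi-q\omega$; only its norm needs to be controlled. By construction $\beta:=\beta_{\omega,q,\psi}$ is the midpoint of $(\omega,(\pi-\psi)/q)$, so $\partial\Sigma_\beta\subset\rho(A)$ while $\lambda^q+z\ne 0$ for every $\lambda\in\overline{\Sigma_\beta}$ and $z\in\Sigma_\psi$.

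First I would write $(A^q+z)^{-1}=h_z(A)$ with $h_z(\lambda):=(\lambda^q+z)^{-1}$, via the composition rule of the holomorphic functional calculus, and look for a Cauchy-type integral along $\partial\Sigma_\beta$. The key difficulty is that, although $h_z(\lambda)=O(|\lambda|^{-q})$ at infinity, one has $h_z(0)=1/z\ne 0$, so the naive integral $\frac{1}{2\pi i}\int_{\partial\Sigma_\beta} h_z(\lambda)(\lambda-A)^{-1}\,d\lambda$ diverges at the origin because $\|(\lambda-A)^{-1}\|$ is only $O(|\lambda|^{-1})$ on the contour. To resolve this, I would deform the portion of $\partial\Sigma_\beta$ near $\lambda=0$ through the resolvent set $\C\setminus\overline{\Sigma_\omega}$ onto the negative real axis (entirely in $\rho(A)$ since $\omega<\pi$): after the substitution $\lambda=-t$ this becomes an integral over $(0,\infty)$ of $(A+t)^{-1}$ against an explicit scalar density in $t$, bounded by $M(A)=\sup_{s>0}\|s(A+s)^{-1}\|$. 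This contribution accounts for the first summand of \eqref{Mn}.

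The complementary sector part, now absolutely convergent, is estimated using $\|(\lambda-A)^{-1}\|\le M(A,\beta)/|\lambda|$ on $\partial\Sigma_\beta$ together with a sharp lower bound of the form $|\lambda^q+z|\ge \cos(q\beta/2)\cos(\beta/2)(|\lambda|^q+|z|)$. The two cosine factors arise naturally in a two-stage application of \eqref{ts1}: one factor $\cos(q\beta/2)$ comes from the angular separation of $\lambda^q\in\overline{\Sigma_{q\beta}}$ from the positive real axis, and one factor $\cos(\beta/2)$ from the angular separation of $\lambda$ itself, as exploited through the contour deformation. After the rescaling $r=|z|^{1/q}s$ the remaining $r$-integral is elementary and contributes the universal prefactor $2/\pi$, producing the second summand of \eqref{Mn}. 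The main obstacle will be to make the contour deformation rigorous in the non-injective case, where $\|(\lambda-A)^{-1}\|$ genuinely attains order $|\lambda|^{-1}$ at $\lambda=0$, and to identify the resulting real-axis integral so that precisely the tight constant $M(A)$ (rather than a larger $\beta$-dependent quantity) appears as an upper bound.
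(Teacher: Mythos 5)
Your overall strategy (isolate a term that the raw constant $M(A)$ controls, then estimate a remainder by a Cauchy integral on $\partial\Sigma_\beta$ with two angular factors from \eqref{ts1}) matches the spirit of the paper, but the specific mechanism you propose for producing the $M(A)$ term does not work, and this is exactly the point you flag as an ``obstacle''. Deforming the contour near the origin onto the negative real axis does not remove the divergence: on the ray $\lambda=te^{\pm i\pi}$ the scalar kernel $h_z(\lambda)=(\lambda^q+z)^{-1}$ still tends to the nonzero value $1/z$ as $t\to0^+$, while $\|(A+t)^{-1}\|$ is of order $t^{-1}$ when $A$ is not injective, so the density on $(0,\infty)$ is not integrable near $0$. (The jump $h_z(te^{i\pi})-h_z(te^{-i\pi})$ \emph{is} $O(t^q)$, so the combined two-sided integral converges, but it does not equal $h_z(A)$: for $q>1$ the deformation also sweeps across poles of $h_z$ at $\lambda_0=|z|^{1/q}e^{i(\pi+\arg z)/q}$ lying strictly between $\partial\Sigma_\beta$ and the negative axis, and the resulting residue terms $\frac{1}{q\lambda_0^{q-1}}(\lambda_0-A)^{-1}$ involve $M(A,\cdot)$ at a nonzero angle, not $M(A)$.) In short, there is no way for the deformed real-axis integral, plus residues, to collapse exactly to a single term bounded by $\sup_{s>0}\|s(A+s)^{-1}\|$, and the factorisation $\cos(\beta/2)\cos(q\beta/2)$ you propose is not what emerges from a single-factor denominator $\lambda^q+z$ treated this way (in particular the dependence on $\psi$ through $\arg z$ is lost).

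The paper resolves this by a subtraction rather than a deformation. It writes, as an identity of scalar functions via \cite[Proposition 3.1.2]{Ha06},
\[
z(A^q+z)^{-1}=|z|^{1/q}\bigl(A+|z|^{1/q}\bigr)^{-1}+f_{z,q}(A),
\qquad
f_{z,q}(\lambda)=\frac{z\lambda-|z|^{1/q}\lambda^q}{(z+\lambda^q)(\lambda+|z|^{1/q})}.
\]
The first term is a scalar multiple of a resolvent of $A$ at a \emph{positive} point $|z|^{1/q}$, so it is bounded by $M(A)$ directly: this is where the first summand of \eqref{Mn} comes from. The function $f_{z,q}$ is designed so that $f_{z,q}(0)=0$ and $f_{z,q}(\lambda)=O(|\lambda|^{-1})$ as $|\lambda|\to\infty$, hence the Cauchy integral of $f_{z,q}(\lambda)(\lambda-A)^{-1}$ over $\partial\Sigma_\theta$ (with $\theta\in(\omega,(\pi-\psi)/q)$) converges absolutely without any deformation or residue bookkeeping. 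The two factors $(z+\lambda^q)$ and $(\lambda+|z|^{1/q})$ in the denominator of $f_{z,q}$ are estimated independently by \eqref{ts1}, giving precisely $\cos\bigl((q\theta+\psi)/2\bigr)$ and $\cos(\theta/2)$; after the rescaling $\lambda=|z|^{1/q}te^{\pm i\theta}$ the scalar integral is $\le 4/(\cos(\theta/2)\cos((q\theta+\psi)/2))$, and choosing $\theta=\beta_{\omega,q,\psi}$ yields \eqref{Mn}. You should replace your contour-deformation step by this subtraction; the rest of your estimates then go through essentially unchanged, and the ``non-injective case'' obstacle disappears because $f_{z,q}$ vanishes at the origin by construction.
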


\begin{proof}
Proposition \ref{PrBBL} gives that $A^q\in \Sect(q\omega)$, and it remains to prove only
\eqref{MEs0}. Using the holomorphic functional calculus and \cite[Proposition 3.1.2]{Ha06},
we have
\[
z (A^q+z)^{-1}=|z|^{1/q}(A+|z|^{1/q})^{-1}+f_{z,q}(A),\qquad z\in
\Sigma_{\pi-q\omega},
\]
where
\[
f_{z,q}(\lambda):=\frac{z}{z+\lambda^q}-\frac{|z|^{1/q}}{\lambda+|z|^{1/q}}=
\frac{z\lambda -z^{1/q}\lambda^q}{(z+\lambda^q)(\lambda+|z|^{1/q})}.
\]
Hence,
\[
\|z (A^q+z)^{-1}\|\le M(A)+\|f_{z,q}(A)\|,\quad
z \in\Sigma_{\pi-q\omega}.
\]

Let $\psi \in (0,\pi-q\omega)$ be fixed, and let $\theta\in (\omega,(\pi-\psi)/q)$.  For $z \in \Sigma_\psi$,  we have
\begin{align*}
\|f_{z,q}(A)\|&\le \frac{1}{2\pi}\int_{\partial \Sigma_{\theta}}
|f_{z,q}(\lambda)|\,\|(\lambda-A)^{-1}\|\,|d\lambda|\\
&\le \frac{M(A,\theta)}{2\pi}\int_{\partial \Sigma_{\theta}}
|f_{z,q}(\lambda)|\frac{|d\lambda|}{|\lambda|}.
\end{align*}
Using (\ref{ts1}), and setting
$c=\cos(\theta/2)\cos((q\theta+\psi)/2)$ and $\lambda = t|z|^{1/q} e^{\pm i\theta}$, we have
\begin{align*}
\int_{\partial \Sigma_{\theta}} |f_{z,q}(\lambda)|\frac{|d\lambda|}{|\lambda|} &\le
\int_{\partial \Sigma_{\theta}}
\frac{|z|+|z|^{1/q}|\lambda|^{q-1}}{|z+\lambda^q|
|\lambda+|z|^{1/q}|} |d\lambda|\\
&\le \frac{1}{c}\int_{\partial \Sigma_{\theta}}
\frac{|z|+|z|^{1/q}|\lambda|^{q-1}}{(|z|+|\lambda|^q|)
(|\lambda|+|z|^{1/q})} |d\lambda|
\\
&=\frac{2}{c}\int_0^\infty\frac{1+t^{q-1}}{(t^q+1) (t+1)}\,dt\\
&\le \frac{2}{c} \int_0^\infty \frac{2}{(t+1)^2} \, dt \\
&= \frac{4}{c} \,.
\end{align*}
Thus,
\[
\|z (A^q+z)^{-1}\|\le M(A)+\frac{2
M(A,\theta)}{\pi\cos(\theta/2)\cos ((q\theta+\psi)/2)}, \qquad
z \in\Sigma_\psi.
\]
Choosing  $\theta=\beta_{\omega,q,\psi}$. we obtain (\ref{MEs0}) and  (\ref{Mn}).
\end{proof}

\end{document}